\begin{document}
\title{On the existence of unstable minimal Heegaard surfaces}

\author{Daniel Ketover}\address{Department of Mathematics\\Princeton
University\\Princeton, NJ 08544}
\author{Yevgeny Liokumovich}
\address{Department of Mathematics\\Massachusetts Institute of Technology\\Cambridge, MA 02139}

\thanks{D.K. was partially supported by an NSF Postdoctoral Research fellowship as well as ERC-2011-StG-278940. Y.L. was partially supported by NSF grant DMS-1711053.}
\email{dketover@math.princeton.edu}
\email{ylio@mit.edu}
\maketitle
\begin{abstract}
We prove that for generic metrics on a $3$-sphere, the minimal surface
obtained from the min-max procedure of Simon-Smith 
has index $1$. 
We prove an analogous result for minimal surfaces arising from strongly irreducible Heegaard sweepouts in
3-manifolds.
We also confirm a conjecture of Pitts-Rubinstein that a strongly irreducible Heegaard splitting in a hyperbolic three-manifold can either be isotoped to a minimal surface of index at most $1$ or else after a neck-pinch is isotopic to a one-sided minimal Heegaard surface.  
\end{abstract}


\def\R{\mathbb R}
\def\RP{\mathbb {RP}}
\def\N{\mathbb N}
\def\Z{\mathbb Z}
\def\C{\mathbb C}
\def\M{\mathcal M}
\def\H{\mathcal H}
\def\L{\mathcal L} 
\def\V{\mathcal V}
\def\Si{\Sigma}
\def\G{\Gamma}

\def\x{\mathbf x}
\def\z{\mathbf z}
\def\g{\mathbf \gamma}
\def\k{\mathbf k}
\def\v{\mathbf v}
\def\r{|x|}
\def\tr{|\partial_r^{\top}|}
\def\nr{|\partial_r^{\bot}|}
\def\dr{|\partial_r|}
\def\vol{\mathrm{vol}}
\def\dmn{\mathrm{dmn}} 
\def\re{\mathrm{Re}}
\def\hess{\mathrm{Hess}\,}
\def\ric{\mathrm{Ric}}
\def\area{\mathrm{area}}
\def\a{\,d\mathcal{H}^n}
\def\b{\,d\mathcal{H}^{n-1}}
\def\d{\mathrm{div}}
\def\dist{\mathrm{dist}}
\def\Ha{\mathcal{H}^2}
\def\Hl{\mathcal{H}^1}

\def\s{\mathrm{spt}}
\def\p{\mathrm{Proj}}
\def\l{\mathrm{loc}}
\newtheorem*{conj}{Conjecture}
\newtheorem*{thma}{Theorem A}
\newtheorem*{thmb}{Theorem B}
\newtheorem*{thmc}{Theorem C}
\newtheorem*{thmd}{Theorem D}
\newtheorem{thm}{Theorem}[section]
\newtheorem{lemma}[thm]{Lemma}
\newtheorem{cor}[thm]{Corollary}
\newtheorem{prop}[thm]{Proposition}
\theoremstyle{remark}
\newtheorem*{rmk}{Remark}
\newtheorem{example}[thm]{Example}
\theoremstyle{definition}
\newtheorem{defi}[thm]{Definition}

\section{Introduction}
The min-max theory was introduced by Almgren in the 60s and then later completed by Pitts in the 80s to construct embedded minimal hypersurfaces in Riemannian manifolds.   Roughly speaking, one considers sweepouts of a manifold and the longest slice in a tightened sweepout that ``pulls over" the entire manifold gives a minimal surface.  Recently, the min-max theory has led to proofs of long-standing problems, for instance the proof of the Willmore Conjecture by Marques and Neves \cite{mn2}.

Almgren-Pitts' approach considers very general sweepouts and it is difficult to control the topology of the minimal surface obtained. In the 80s by Simon-Simon refined Pitts' arguments to allow one to consider sweepouts of a $3$-manifold by surfaces of a fixed topology.  For instance, given $\mathbb{S}^3$ one can consider sweepouts by embedded two-spheres.  Simon and Smith proved that one can work in this restricted class of sweepouts and still obtain a closed embedded minimal surface but with control on the topology.  It was proved in \cite{k} that the topology of the limiting minimal surfaces is achieved roughly speaking after finitely many neck-pinches.  

A basic question is to understand how the Morse index of the minimal surface obtained in either approach is related to the number of parameters used in the construction.  Roughly speaking, a $k$ parameter family should produce an index $k$ critical point.   Suprisingly, the question of estimating the Morse index had been left open since Pitts' original work.   

In the Almgren-Pitts setting, recently Marques-Neves \cite{mn4} made the first advance on this problem by  proving that when the ambient metric is generic (i.e., bumpy in the sense of White \cite{W}) and contains no non-orientable embedded minimal surfaces,  that the support of the minimal surface obtained has index $1$ when one considers one-parameter sweepouts.  In other words, precisely one component is unstable with index $1$ and the other components are all stable.

Under the same hypotheses, we prove in this paper that in the Simon-Smith setting, when running a min-max procedure with two-spheres in $\mathbb{S}^3$, the support of the min-max limit has index $1$.    

The work of Marques-Neves \cite{mn4} involves three components: an upper index bound, a lower index bound, and the fact that the unstable component is achieved with multiplicity $1$.  While the first of these generalizes easily to the Simon-Smith setting, the second and third require new interpolation results.
The main technical contribution of this paper is an interpolation result that rules out
convergence of a min-max sequence to a stable minimal surface.

In this paper, we also confirm a long-standing conjecture of Pitts-Rubinstein: namely to show that in a hyperbolic manifold, if a Heegaard surface is \emph{strongly irreducible} then it can be isotoped to be an index $1$ minimal surface (or else after neck-pinch to the boundary of a twisted interval bundle over a one-sided Heegaard surface).  See Theorem \ref{pr} for a precise statement.

Let us now state our results.  For this we need a number of definitions.

Given a Heegaard splitting $H$ of $M$, a \emph{sweepout by Heegaard surfaces} or \emph{sweepout} is a one parameter family of closed sets $\left\{\Sigma_t\right\}_{t\in [0,1]}$ continuous in the Hausdorff topology such that 
\begin{enumerate}
\setlength{\itemsep}{1pt}
  \setlength{\parskip}{0pt}
  \setlength{\parsep}{0pt}

\item $\Sigma_t$ is an embedded smooth surface isotopic to $H$ for $t\in(0,1)$
\item $\Sigma_t$ varies smoothly for $t\in (0,1)$
\item $\Sigma_{0}$ and $\Sigma_1$ are $1$-d graphs, each one a spine of one of the handlebodies determined by the splitting surface $H$.
\end{enumerate}

If $\Lambda$ is a collection of sweepouts, we say that the set $\Lambda$ is \emph{saturated} if given a map $\phi\in C^{\infty}(I\times M,M)$ such that $\phi(t,-)\in\text{Diff}_{0} M$ for all $t\in I$, and a family  $\left\{\Sigma_t\right\}_{t\in I}\in\Lambda$, we have  $\left\{\phi(t,\Sigma_t)	\right\}_{t\in I}\in\Lambda$. 
Given a Heegaard splitting $H$, let
$\Lambda_H$ denote the set of all sweepouts 
by Heegaard surfaces $\{\Si_t \}$,
such that the corresponding family of mod 2
flat $2$-cycles
is not contractible relative to $\partial [0,1] = \{0,1\}$.
$\Lambda_H$ is a saturated family of sweepouts.

The width associated to $\Lambda_H$ is defined to be
\begin{equation}\label{w}
W(M,\Lambda_H)=\inf_{\left\{\Sigma_t\right\}\in\Lambda}\sup_{t\in I} \mathcal{H}^2(\Sigma_t),
\end{equation}
where $\mathcal{H}^2$ denotes $2$-dimensional Hausdorff measure.  It follows by an easy argument using the isoperimetric inequality (Proposition 1.4 in \cite{cd}) that $W_H>0$.  This expresses the non-triviality of the sweepout.  A \emph{minimizing sequence} is a sequence of families $\left\{\Sigma^n_t\right\}\in\Lambda_H$ such that
\begin{equation} 
\lim_{n\rightarrow\infty}\sup_{t\in[0,1]} \mathcal{H}^2(\Sigma^n_t)=W(M,\Lambda_H).
\end{equation}
\
A \emph{min-max sequence} is then a sequence of slices $\Sigma^n_{t_n}$, $t_n\in (0,1)$ such that
\begin{equation} 
\mathcal{H}^2(\Sigma^n_{t_n})\rightarrow W(M,\Lambda_H).
\end{equation}

The main result due to Simon-Smith is that some min-max sequence converges to a smooth minimal surface realizing the width, whose genus is controlled.  Some genus bounds were proved by Simon-Smith, but the optimal ones quoted below were proved in \cite{k}:

\begin{thm}[Simon-Smith Min-Max Theorem 1982]\label{ss}
Let $M$ be a closed oriented Riemannian $3$-manifold admitting a Heegaard surface $H$ of genus $g$. Then some min-max sequence $\Sigma_{t_i}^i$ of surfaces isotopic to $H$ converges as varifolds to $\sum_{j=1}^k n_j \Gamma_j$, where $\Gamma_j$ are smooth embedded pairwise disjoint minimal surfaces and where $n_j$ are positive integers.  Moreover, \begin{equation}
W(M, \Lambda_H)=\sum_{j=1}^k n_j\mathcal{H}^2(\Gamma_j).
\end{equation} The genus of the limiting minimal surface can be controlled as follows:
\begin{equation}\label{gb}
\sum_{i\in O} n_ig(\Gamma_i) + \frac{1}{2}\sum_{i\in N} n_i(g(\Gamma_i)-1)\leq g,
\end{equation}
where $O$ denotes the set of $i$ such that $\Gamma_i$ is orientable, and $N$ the set of $i$ such that $\Gamma_i$ is non-orientable, and $g(\Gamma)$ denotes the genus of $\Gamma$.  The genus of a non-orientable surface is the number of cross-caps one must attach to a two-sphere to obtain a homeomorphic surface.
\end{thm}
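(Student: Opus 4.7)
The plan is to follow Pitts' min-max machinery adapted to the smooth-isotopy setting introduced by Simon and Smith. Given a minimizing sequence $\{\Sigma^n_t\} \in \Lambda_H$, the first step is a \emph{pull-tight} construction: I would deform each slice whose area is close to $W(M,\Lambda_H)$ along a continuous vector field opposite to its first variation, producing a new minimizing sequence in which every varifold limit of a min-max subsequence is stationary. Because the sweepout constraint requires each slice to be a smooth embedded surface isotopic to $H$, these deformations must be realized by ambient diffeomorphisms, which is compatible with $\Lambda_H$ being saturated.

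The next step is to extract an \emph{almost minimizing} min-max sequence $\Sigma^n_{t_n}$ in the sense of Pitts: in any pair of sufficiently small disjoint geodesic balls, $\Sigma^n_{t_n}$ cannot be isotoped inside the ball to a surface of much smaller area without first passing through one of much larger area. This is a combinatorial argument. Discretizing the parameter interval and using dyadic refinements, one shows that if no $t_n$ were almost minimizing, then one could locally decrease the area of every slice and patch the local isotopies together to obtain a competitor sweepout of strictly smaller maximal area, contradicting minimality. Working with pairs of disjoint balls, rather than a single ball, is what makes the combinatorial patching possible.

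Regularity of the varifold limit $V = \sum_j n_j \Gamma_j$ then follows from the replacement method, adapted to the isotopy setting via the Meeks-Simon-Yau lemma on area minimization within an isotopy class. Inside a small ball $B$, I would replace $\Sigma^n_{t_n}$ by an area-minimizer among isotopic surfaces with the same boundary, and show that the resulting sequence converges to a stationary varifold that agrees with $V$ off a smaller ball. Iterating on two overlapping balls produces a smooth, embedded minimal surface in the overlap, and a covering argument combined with standard removable-singularity results yields smoothness and embeddedness of each $\Gamma_j$; pairwise disjointness follows from the maximum principle. The identity $W(M,\Lambda_H) = \sum_j n_j \mathcal{H}^2(\Gamma_j)$ is then immediate from varifold convergence of area.

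The main obstacle, and the key refinement over Almgren-Pitts, is the genus bound (\ref{gb}). Here I would appeal to the local analysis of \cite{k}: away from finitely many points the convergence $\Sigma^n_{t_n} \to V$ is smooth and exactly $n_j$-sheeted over each $\Gamma_j$, and at the bad points the surfaces develop a controlled collection of necks which can be surgered away. Each two-sided $\Gamma_j$ contributes at least $n_j g(\Gamma_j)$ handles to the pre-surgery genus of $\Sigma^n_{t_n}$, while each one-sided $\Gamma_j$ forces a connected orientable double cover contributing at least $n_j(g(\Gamma_j)-1)/2$ handles; since neck-pinch surgeries can only decrease genus, summing over $j$ and comparing with $g(\Sigma^n_{t_n}) = g$ yields (\ref{gb}). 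The delicate part is carrying out this annular decomposition and showing that the neck regions are topologically controlled, which is the technical heart of \cite{k} and the main difficulty one has to confront beyond the soft min-max framework.
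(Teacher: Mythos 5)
The paper does not actually prove Theorem~\ref{ss}: it is quoted as a known result, with the existence and regularity statements attributed to Simon--Smith \cite{ss} (as written up in Colding--De Lellis \cite{cd}) and the optimal genus bound \eqref{gb} attributed to \cite{k}. Your sketch faithfully tracks that standard proof from the literature --- pull-tight to make min-max limits stationary, the combinatorial extraction of an almost-minimizing min-max sequence, regularity via replacements based on the Meeks--Simon--Yau isotopy-minimization lemma, and the genus bound via multi-sheeted graphical convergence away from finitely many neck points --- so it is essentially the same approach, correctly identifying the sheeted-convergence/neck-surgery analysis of \cite{k} as the technical heart of the genus estimate.
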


In particular
\begin{thm}[Existence of minimal two-spheres in three-spheres]\label{ss2}
By sweeping out a Riemannian three-sphere by two-spheres, we obtain the existence of a family $\{\Gamma_1,...,\Gamma_k\}$ of pairwise disjoint smooth embedded minimal two-spheres. 
\end{thm}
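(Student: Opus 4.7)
The plan is to specialize Theorem \ref{ss} to $S^3$ with an arbitrary Riemannian metric, taking $H$ to be a genus-$0$ Heegaard surface (which splits $S^3$ into two $3$-balls). First I would exhibit a sweepout $\{\Sigma_t\}_{t\in[0,1]} \in \Lambda_H$ by $2$-spheres: choose any smooth function $f: S^3 \to [0,1]$ with exactly two critical points (a minimum $p_0$ and a maximum $p_1$), and let $\Sigma_t = f^{-1}(t)$. For $t \in (0,1)$ each slice is a smooth embedded $2$-sphere isotopic to $H$, while $\Sigma_0 = \{p_0\}$ and $\Sigma_1 = \{p_1\}$ serve as the (degenerate, $0$-dimensional) spines of the two $3$-ball handlebodies. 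To see that the associated family of mod-$2$ flat $2$-cycles is not contractible relative to $\{0,1\}$, note that via Almgren's isomorphism this homotopy class corresponds to the generator of $H_3(S^3; \Z/2) \cong \Z/2$, and our sweepout covers $S^3$ exactly once; hence $\{\Sigma_t\} \in \Lambda_H$.

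Applying Theorem \ref{ss} with Heegaard genus $0$ then yields pairwise disjoint smooth embedded minimal surfaces $\Gamma_1, \ldots, \Gamma_k$ with positive integer multiplicities $n_j$ satisfying
$$\sum_{i\in O} n_i\, g(\Gamma_i) + \tfrac{1}{2}\sum_{i \in N} n_i \bigl(g(\Gamma_i) - 1\bigr) \leq 0.$$
Both summands are non-negative (orientable surfaces have genus $\geq 0$, while embedded closed non-orientable surfaces have genus $\geq 1$), so each vanishes term by term. Consequently every orientable $\Gamma_i$ is a $2$-sphere, and every non-orientable $\Gamma_i$ would have to be $\RP^2$. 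Since $\RP^2$ does not embed in $S^3$ (otherwise, combining Alexander's theorem applied to the $S^2$ boundary of a tubular neighborhood with van Kampen's theorem would force $\pi_1(S^3) = \Z/2$), the non-orientable case is vacuous and every $\Gamma_i$ is a minimal $2$-sphere. No step here presents a substantive obstacle; the result is essentially a direct corollary of Theorem \ref{ss} together with standard facts about embedded surfaces in $S^3$.
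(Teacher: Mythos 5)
Your proposal is correct and matches the paper's (implicit) proof: the paper states Theorem \ref{ss2} as an immediate corollary of Theorem \ref{ss}, obtained by specializing to a genus-$0$ Heegaard splitting of $\mathbb{S}^3$ and reading off the genus bound \eqref{gb} with $g=0$. Your write-up simply spells out the details the paper leaves tacit — constructing a nontrivial sweepout, observing that the genus bound forces every orientable component to be a sphere and every non-orientable component to be an $\mathbb{RP}^2$, and ruling out the latter since $\mathbb{RP}^2$ does not embed in $S^3$.
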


Marques-Neves \cite{mn4} recently obtained upper index bounds for the min-max minimal surface obtained in Theorem \ref{ss} and \ref{ss2}:
\begin{thm}[Upper Index Bounds]\label{mn}
In the setting of Theorem \ref{ss2} suppose in addition that the metric is bumpy, there holds  
\begin{equation}\label{mn}
\sum_{i=1}^k\text{\emph{index}}(\Gamma_i) \leq 1.
\end{equation}
\end{thm}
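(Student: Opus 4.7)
The plan is to argue by contradiction, following the scheme of Marques--Neves \cite{mn4} but adapted to respect the topology constraint of $\Lambda_H$. Suppose $k := \sum_i \text{index}(\Gamma_i) \geq 2$; for any minimizing sequence $\{\Sigma_t^n\} \in \Lambda_H$ I would construct a competitor $\{\tilde\Sigma_t^n\} \in \Lambda_H$ whose maximum area is strictly below $W(M, \Lambda_H)$ for $n$ large, contradicting the definition of the width.

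I would first build a two-parameter family of ambient deformations that quadratically decreases area near $\Sigma := \bigsqcup_i \Gamma_i$. Since the metric is bumpy, the Jacobi operator $L_\Sigma$ is nondegenerate; since $k \geq 2$, I may pick orthonormal eigenfunctions $\psi_1, \psi_2$ with eigenvalues $\lambda_1, \lambda_2 < 0$. As $\mathbb{S}^3$ is simply connected each $\Gamma_i$ is two-sided, so $\psi_j \nu$ extends to an ambient vector field $X_j$ compactly supported in a tubular neighborhood of $\Sigma$. Composing the flows of $X_1, X_2$ gives a smooth family $\{F_{\bar s}\}_{\bar s \in \overline{B^2_{\delta_0}}}$ of diffeomorphisms with $F_0 = \text{id}$ and $\partial_{s_j} F|_{\bar s = 0} = X_j$. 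The second variation formula then yields, for $\delta \leq \delta_0$ small enough, an $\mathbf{F}$-neighborhood $\mathcal{V}$ of $V$ and a constant $c > 0$ with
\begin{equation*}
\mathcal{H}^2(F_{\bar s}(\Sigma')) \leq \mathcal{H}^2(\Sigma') - c|\bar s|^2 \quad \text{for every } \Sigma' \in \mathcal{V}, \; |\bar s| \leq \delta.
\end{equation*}

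Next I would invoke the Simon--Smith pull-tight (carried out entirely through ambient isotopies, hence preserving $\Lambda_H$) to assume that $\{\Sigma_t^n\}$ concentrates only near $V$: for $n$ large there is $c_\mathcal{V} > 0$ with $\mathcal{H}^2(\Sigma_t^n) \leq W - c_\mathcal{V}$ whenever $\Sigma_t^n \notin \mathcal{V}$. Setting $K_n := \{t : \Sigma_t^n \in \mathcal{V}\} \subset (0,1)$, I would choose a continuous $\sigma_n : [0,1] \to \overline{B^2_\delta}$ with $\sigma_n(0) = \sigma_n(1) = 0$ and $|\sigma_n(t)| = \delta$ on $K_n$; this is possible since $\overline{B^2_\delta} \setminus \{0\}$ is path connected and $K_n$ is compactly contained in $(0,1)$. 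Define $\tilde\Sigma_t^n := F_{\sigma_n(t)}(\Sigma_t^n)$; by the saturation property, $\{\tilde\Sigma_t^n\} \in \Lambda_H$.

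The area estimate then splits into two regimes: on $K_n$, the inequality from the previous step gives $\mathcal{H}^2(\tilde\Sigma_t^n) \leq W + \tfrac{1}{n} - c\delta^2$, while off $K_n$ a coarse $C^1$ bound on $F_{\sigma_n(t)}$ gives $\mathcal{H}^2(\tilde\Sigma_t^n) \leq (1 + C\delta)(W - c_\mathcal{V})$; choosing $\delta$ small in terms of $c_\mathcal{V}$ and then $n$ large yields $\sup_t \mathcal{H}^2(\tilde\Sigma_t^n) \leq W - \mu$ for some uniform $\mu > 0$. The hard part will be the pull-tight: one must execute the classical Pitts procedure entirely through ambient diffeomorphisms so as to remain in $\Lambda_H$, and then conclude that the concentration varifold is precisely $V$ with no spurious almost-maximizers elsewhere. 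A secondary point clarifies why $k \geq 2$ is essential: with only one unstable direction $X_1$, a push by $s_1 X_1$ would merely translate in parameter the unique downward-parabola peak of the sweepout's area, leaving $\sup_t \mathcal{H}^2$ unchanged at $W$; the second independent unstable direction is precisely what allows the peak itself to drop to $W + \tfrac12 \lambda_2 s_2^2 < W$.
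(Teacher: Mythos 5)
The statement you are reproving is quoted in the paper from Marques--Neves \cite{mn4}; the paper itself offers no independent proof and merely notes that the upper bound ``generalizes easily to the Simon--Smith setting.'' So the relevant question is whether your argument, as a proof of the Marques--Neves result, is sound.

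There is a genuine gap, and it is precisely the step you flag as the ``hard part.'' Your contradiction requires that after pull-tight the sweepout concentrates \emph{only} near $V=\sum n_i\Gamma_i$, i.e.\ that there is $c_{\mathcal V}>0$ with $\mathcal{H}^2(\Sigma^n_t)\leq W-c_{\mathcal V}$ whenever $\Sigma^n_t\notin\mathcal V$. Pull-tight does not give this. Pull-tight makes every varifold in the critical set stationary; it does not shrink the critical set to a single neighborhood. If there is another stationary integral varifold $V'$ of mass $W$ (for instance a strictly stable minimal surface with multiplicity --- exactly the scenario the rest of this paper is devoted to ruling out), the pulled-tight sweepout may have slices of area arbitrarily close to $W$ far from $\mathcal V$. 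Your $2$-parameter push $F_{\sigma_n(t)}$ then buys nothing off $K_n$: the coarse $(1+C\delta)$ bound does not pull the maximum below $W$, and the proposed competitor no longer contradicts the definition of the width. In other words, ``index of $V$ is $\geq 2$'' simply does not imply ``the width can be strictly lowered''; it only implies that $V$ can be removed from the critical set.

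That is exactly what Marques--Neves' Deformation Theorem A does, and it is the correct statement to aim for. Their $2$-parameter construction near $V$ is essentially the $F_{\bar s}$ and $\sigma_n$ you wrote down, but the conclusion is weaker and correct: one produces a new (pulled-tight) minimizing sequence $\{\Psi_i\}$ with $\sup_t\mathcal{H}^2(\Psi_i(t))\to W$ (not $< W$) whose critical set avoids an $\mathbf{F}$-neighborhood of $V$. Because the metric is bumpy and one has the genus and area bounds from Theorem~\ref{ss}, there are only finitely many candidates $V$ of total index $\geq 2$ at level $W$, so one can iterate this deformation finitely many times and apply the min-max existence theorem once more to the final sequence. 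The new min-max limit is forced to have total index $\leq 1$. If you replace your ``strictly below $W$'' conclusion with ``equal to $W$ but avoiding a neighborhood of $V$'' and add the bumpy-metric finiteness and re-extraction step, the proof goes through and matches the route the paper implicitly relies on. Your remarks on extendability of $\psi_j\nu$ by two-sidedness in $\mathbb{S}^3$, on the choice of $\sigma_n$ (which should be smooth, not merely continuous, to stay in the $C^\infty$ saturated class $\Lambda_H$), and on why $k\geq 2$ is essential are all fine.
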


\begin{rmk}Recall that a metric is bumpy if no immersed minimal surface contains a non-trivial Jacobi field.  White proved \cite{W} that bumpiness is a generic property for metrics.  In particular, any metric can be perturbed slightly to be bumpy.  
\end{rmk}
Our main result is the following equality in the case of spheres:

\begin{thm}[Index Bounds for Spheres] \label{main}
In the setting of Theorem \ref{ss2}, suppose in addition that the metric is bumpy.  Then the min-max limit satisfies:
\begin{equation}\label{generic}
\sum_{i=1}^k\text{\emph{index}}(\Gamma_i) = 1.
\end{equation}

If the metric is not assumed to be bumpy then we obtain the existence of a minimal surface satisfying \eqref{gb} and 
\begin{equation}\label{nongeneric}
\sum_{i=1}^k\text{\emph{index}}(\Gamma_i) \leq  1 \leq 
\sum_{i=1}^k\text{\emph{index}}(\Gamma_i)+\sum_{i=1}^k\text{\emph{nullity}}(\Gamma_i)
\end{equation}\end{thm}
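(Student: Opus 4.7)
The upper bound $\sum_i \mathrm{index}(\Gamma_i) \le 1$ is already Theorem~\ref{mn} of Marques--Neves and does not use bumpiness. Since Morse indices are non-negative integers, the equality \eqref{generic} in the bumpy case, and the left inequality $1 \le \sum_i \mathrm{index}(\Gamma_i)+\sum_i \mathrm{nullity}(\Gamma_i)$ in the general case, both reduce to ruling out the possibility that every component $\Gamma_i$ of the min-max limit is \emph{strictly stable} (zero index and zero nullity). In the bumpy case, stability forces strict stability automatically via White's theorem, so the two statements of Theorem~\ref{main} have a common heart: an interpolation argument preventing a Simon--Smith min-max sequence from converging to a union of strictly stable minimal spheres.

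The plan is to argue by contradiction. Assume each $\Gamma_i$ is strictly stable. The standard second variation analysis gives, for each $i$, a tubular neighborhood $U_i \cong \Gamma_i \times (-\epsilon,\epsilon)$ on which the area functional on graphical perturbations is strictly convex, with $\Gamma_i$ as its unique minimum. From this one constructs an ambient isotopy $\Psi^i_s$ of $U_i$, equal to the identity near $\partial U_i$, such that any surface inside $U_i$ which is close as a varifold to $n_i\Gamma_i$ and graphical over $\Gamma_i$ is deformed to one of strictly smaller area. The goal is then to glue these local deformations into a global ambient isotopy of $M$, apply it to the slices of a minimizing sweepout $\{\Sigma^n_t\}$ for all $t$ in a neighborhood of the critical time $t_n$, and obtain a competitor family in $\Lambda_H$ whose supremum of area is strictly below $W(M,\Lambda_H)$, contradicting the definition of the width.

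The main obstacle, and the technical contribution the paper advertises, is the interpolation itself. The slice $\Sigma^n_{t_n}$ is a single embedded sphere converging as a varifold to $\sum_i n_i \Gamma_i$; inside each $U_i$ this means, for large $n$, that $\Sigma^n_{t_n} \cap U_i$ consists of $n_i$ nearly parallel graphical sheets over $\Gamma_i$ joined by thin necks, by the structure theorem of \cite{k}. A single ambient isotopy of $U_i$ acts identically on all sheets, and so need not decrease total area. The remedy is to work sheet by sheet: use the graph decomposition from \cite{k} to apply $\Psi^i_s$ separately to each sheet (with a controlled modification across the thin neck regions), and verify that the resulting surface is still smoothly embedded and still isotopic to an embedded $2$-sphere. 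One must then interpolate in the parameter $t$: for $t$ near but not equal to $t_n$ the deformation is damped back to the identity via a cutoff in $t$, ensuring the modified family $\{\widetilde\Sigma^n_t\}$ lies in $\Lambda_H$, has the required spine endpoints at $t=0,1$, and is continuous in the Hausdorff topology. The strict stability gives a uniform area drop on the deformed slices near $t_n$, beating $W(M,\Lambda_H)$ and producing the contradiction. The hardest step is the combined sheet-by-sheet and $t$-parameter interpolation, since it must simultaneously preserve embeddedness, the sphere topology of each slice, and the sweepout structure, while achieving a definite quantitative decrease in area.
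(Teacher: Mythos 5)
Your proposal correctly identifies the role of the Marques--Neves upper bound and the fact that the content of the theorem is to prevent convergence to a union of strictly stable components. However, both the overall contradiction mechanism and the local deformation step contain genuine gaps.

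The most serious problem is the contradiction you aim for. You propose to produce a competitor family in $\Lambda_H$ whose supremum of area is strictly below $W(M,\Lambda_H)$. This is impossible by definition: $W$ is the infimum over all sweepouts in $\Lambda_H$ of the sup of area, so no element of $\Lambda_H$ can have a strictly smaller sup. The paper does something structurally different. The Deformation Theorem \ref{deformation} produces a new minimizing sequence $\Psi_i$ with $\limsup_i\sup_t\|\Psi_i(t)\| \leq W$ (the same asymptotic sup, not a smaller one) whose critical set is disjoint from an $\mathbf{F}$-ball around $\Sigma$, while no new stationary integral varifolds of mass $W$ are introduced. Since, by bumpiness, there are only finitely many strictly stable configurations with total mass $W$, one applies the Deformation Theorem finitely many times to obtain a minimizing sequence whose critical set contains no strictly stable surface, and \emph{then} invokes the Marques--Neves deformation/index machinery (Theorem 6.1 of \cite{mn4}) to extract a min-max surface of index $\leq 1$; being not strictly stable, and the metric being bumpy, its index is exactly $1$. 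Your argument omits this final appeal to the Marques--Neves theorem entirely, and the first half cannot be repaired to ``beat'' $W$.

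The second problem is the local step itself. You claim that in a tubular neighborhood $U_i$ of $\Gamma_i$ one can construct an ambient isotopy, supported in $U_i$, that strictly decreases the area of any surface close as a varifold to $n_i\Gamma_i$, and then apply it sheet by sheet. When $n_i > 1$ this cannot be done by an ambient isotopy supported in $U_i$: the squeezing map $P_t$ pushes \emph{all} sheets towards $\Gamma_i$, so the sheets collide before any definite area drop is achieved, and a sheet-by-sheet application is not an ambient isotopy. More fundamentally, the sheets are joined by thin necks which can be nested, knotted and linked in arbitrarily complicated ways (see Figures \ref{fig:knotted}, \ref{fig:essential}, \ref{fig:dumbbell}), and untangling these necks is precisely why the paper develops the machinery of Sections \ref{onecomponent}--\ref{multiplecomponents}: stacking in cells (Proposition \ref{canoncial form}), cable sliding (Lemma \ref{lem: sliding}), the Light Bulb Theorem and its generalization (Propositions \ref{lb}, \ref{general}), the pull-tight argument outside the tubular neighborhood (Lemma \ref{contracting_hair}), and the matching of the two endpoint canonical forms (Proposition \ref{joining two sides}). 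None of this can be replaced by the purely local graphical argument you sketch. Finally, a minor remark: as stated in the paper, Theorem \ref{mn} already assumes bumpiness, so it is not accurate to say the upper bound ``does not use bumpiness'' without further comment.
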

The index bounds \eqref{generic} and \eqref{nongeneric} were conjectured explicitly by Pitts-Rubinstein \cite{pr2} in 1986.

In particular we have the following:
\begin{thm}\label{indexspheres}
Let $M$ be a Riemannian $3$-manifold diffeomorphic to $\mathbb{S}^3$ endowed with a bumpy metric.  Then $M$ contains an embedded index $1$ minimal two-sphere. 
\end{thm}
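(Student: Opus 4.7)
The plan is to derive Theorem~\ref{indexspheres} as an immediate corollary of Theorem~\ref{main}; essentially no new technical work is required beyond what has already been stated. First I would apply Theorem~\ref{ss2} to a bumpy metric on $M\cong\mathbb{S}^3$, obtaining via the Simon-Smith procedure (with sweepouts by two-spheres) a finite collection $\{\Gamma_1,\ldots,\Gamma_k\}$ of pairwise disjoint smooth embedded minimal surfaces realizing the width $W(M,\Lambda_H)$, where $H$ is a genus-$0$ Heegaard surface of $M$.

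Next, I would confirm that each component $\Gamma_i$ is in fact a two-sphere. The genus bound \eqref{gb} with $g=0$ forces $g(\Gamma_i)=0$ for every orientable component; and since $H_2(\mathbb{S}^3;\mathbb{Z}/2)=0$, the three-sphere admits no embedded closed non-orientable surfaces, so the non-orientable index set $N$ is automatically empty. Hence every $\Gamma_i$ is a smoothly embedded minimal two-sphere.

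The crucial step is then to invoke Theorem~\ref{main}: in the bumpy case one has
\begin{equation*}
\sum_{i=1}^{k}\text{index}(\Gamma_i) \;=\; 1.
\end{equation*}
Since each summand is a non-negative integer, exactly one of the $\Gamma_i$ has Morse index $1$ and all others are stable. That single component supplies the required embedded index-$1$ minimal two-sphere in $M$.

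The deduction above is purely formal; the substantive content is concentrated in Theorem~\ref{main}. The hard part, assumed rather than argued here, is the lower bound $\sum \text{index}(\Gamma_i)\geq 1$ together with the multiplicity-one statement for the unstable component: these rely on the new interpolation theorem advertised in the introduction as the paper's main technical contribution, which rules out the possibility that a min-max sequence concentrates on a stable minimal surface. Once that obstacle is overcome and combined with the upper bound from Theorem~\ref{mn}, Theorem~\ref{indexspheres} becomes the arithmetic observation that a sum of non-negative integers equal to one has exactly one nonzero summand, which must equal one.
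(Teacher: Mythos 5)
Your proposal is correct and takes the same route as the paper, which simply presents Theorem~\ref{indexspheres} as an immediate corollary of Theorem~\ref{main} (introduced by ``In particular'') without a separate proof. Your additional check that all components must be two-spheres — using the genus bound \eqref{gb} with $g=0$ for orientable components and the absence of embedded closed non-orientable surfaces in $\mathbb{S}^3$ (since $H_2(\mathbb{S}^3;\mathbb{Z}/2)=0$ precludes one-sided surfaces) — is a correct and worthwhile detail that the paper leaves implicit.
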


A long-standing problem is to prove that in a Riemannian three-sphere $M$, there are at least four embedded minimal two-spheres.  This is the analog of Lusternick Schnirelman's result about the existence of three closed geodesics on two-spheres.   

If $M$ contains a stable
two-sphere, then Theorem \ref{indexspheres} implies the following (by considering the three-balls on each side of this two-sphere):

\begin{thm}[Lusternick--Schnirelman Problem]
Let $M$ be a Riemannian three-sphere containing a stable embedded two-sphere.  Then $M$ contains at least two index one minimal two-spheres.  Thus $M$ contains at least \emph{three} minimal two-spheres.  
\end{thm}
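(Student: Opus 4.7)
The plan is to apply Theorem~\ref{indexspheres} separately on each of the two three-balls bounded by the stable two-sphere. Let $S \subset M$ be the stable embedded minimal two-sphere, and let $B_1, B_2$ denote the two closed three-balls with $\partial B_i = S$ into which $S$ separates $M \cong \mathbb{S}^3$. After a small $C^{\infty}$-perturbation we may assume the ambient metric is bumpy and $S$ is strictly stable (both are open conditions on the metric, and under small bumpy perturbations a nearby strictly stable minimal two-sphere persists).

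For each $i \in \{1,2\}$, I would form a closed Riemannian three-sphere $M_i$ by attaching a three-ball cap $C_i$ to $B_i$ along $S$. Choose $C_i$ so that (a) the glued metric on $M_i$ is smooth across $S$ (possible because $S$ is minimal from the $B_i$ side, so one can match second fundamental forms), (b) $C_i$ has strongly positive Ricci curvature and extremely small diameter, ensuring that any closed embedded minimal surface contained in $C_i$ has $\mathcal{H}^2$-area less than a prescribed $\delta > 0$, and (c) the metric on $M_i$ is bumpy. Applying Theorem~\ref{indexspheres} to $M_i$ yields an embedded index-$1$ minimal two-sphere $\Gamma_i \subset M_i$. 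Since $S$ is stable while $\Gamma_i$ has index one, $\Gamma_i \ne S$ automatically.

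The main obstacle is to verify $\Gamma_i \subset B_i$, i.e.\ to rule out both $\Gamma_i \subset C_i$ and $\Gamma_i$ crossing $S$. For the first: strict stability of $S$ yields a quadratic lower bound $\mathcal{H}^2(\Sigma) \geq \mathcal{H}^2(S) + c_0\, d_{C^1}(\Sigma, S)^2$ for embedded surfaces $\Sigma$ in a fixed $C^1$-neighborhood of $S$; combined with the fact that any sweepout of $M_i$ (with spines chosen on opposite sides of $S$) must produce at some time a slice realizing a nontrivial isoperimetric profile across $S$, this gives a lower bound $W(M_i) \geq \mathcal{H}^2(S) + c$ for $c > 0$ independent of $C_i$, once $C_i$ is small. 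Choosing $\delta < \mathcal{H}^2(S) + c$ then forces $\Gamma_i \not\subset C_i$. To rule out $\Gamma_i$ meeting $S$: the strong maximum principle excludes tangential intersection of two distinct minimal surfaces, and transverse intersection can be excluded by a cut-and-paste argument (of catenoid-estimate type) showing that nontrivial intersection circles could be surgered away to produce a competitor sweepout of strictly smaller width, contradicting that $\Gamma_i$ realizes $W(M_i)$.

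With $\Gamma_1 \subset B_1$ and $\Gamma_2 \subset B_2$ confirmed, these are two distinct embedded index-$1$ minimal two-spheres (they lie in disjoint open subsets of $M$). Together with the stable sphere $S$, this exhibits three distinct embedded minimal two-spheres in $M$, completing the proof. In the non-bumpy case, one perturbs to a bumpy metric, applies the above, and passes to a compactness limit for minimal surfaces with bounded index and area; the limiting spheres may have nullity, but one obtains the same count of geometric minimal two-spheres.
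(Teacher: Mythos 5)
Your route is genuinely different from the paper's.  The paper takes each of the two closed three-balls $B_1,B_2$ bounded by $S$ and runs the Simon--Smith min-max directly \emph{inside the ball}, treating it as a genus-zero handlebody with strictly stable boundary: Proposition~\ref{boundary} shows that the width of this relative problem strictly exceeds $\mathcal{H}^2(S)$, and Propositions~\ref{handlebody} and~\ref{handlebody2} (together with Theorem~\ref{deformation}) then give an index-$1$ minimal two-sphere in the \emph{interior} of each $B_i$.  Because every slice of every competitor sweepout stays inside $\overline{B_i}$ and the limit is pushed off the strictly stable boundary $S$ by the isolation Lemma~\ref{isisolated} and the Deformation Theorem, there is never a localization problem: the two index-$1$ spheres automatically lie in disjoint open balls.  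You instead cap each $B_i$ to a closed $\mathbb{S}^3$ and invoke the closed-manifold statement Theorem~\ref{indexspheres}, which forces you to argue \emph{a posteriori} that the resulting index-$1$ sphere $\Gamma_i$ actually sits in $B_i$.  That extra localization step is exactly where the difficulties arise.

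There are two genuine gaps.  First, the claimed inequality $W(M_i)\ge \mathcal{H}^2(S)+c$ for sweepouts of the capped-off closed manifold is not true in general: if $B_i$ contains a thin neck, a sweepout of $M_i$ by two-spheres can have all slices of area far below $\mathcal{H}^2(S)$, so the strict stability of $S$ gives no lower bound on the closed width (contrast with Proposition~\ref{boundary}, where the sweepouts are \emph{anchored} at the boundary $S$, which is why a lower bound is available there).  This particular gap could be sidestepped by instead constructing the cap so that it is foliated by surfaces of nonzero mean curvature and hence contains no closed minimal surface at all, which requires more care than ``small diameter implies small area.''  Second, and more seriously, there is no rigorous argument that $\Gamma_i$ does not cross $S$ transversally.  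The strong maximum principle rules out a one-sided tangency, but two distinct embedded minimal surfaces in a three-manifold can perfectly well intersect transversally in circles, and the Deformation Theorem only forbids the varifold limit from being a multiple of $S$ itself, not from being a connected minimal sphere that passes through $S$.  Your proposed ``cut-and-paste / catenoid-estimate type'' surgery is not spelled out, and it is not clear that the surgered competitor has smaller area, remains an admissible two-sphere sweepout, or contradicts anything; the catenoid estimate of \cite{KMN} goes in the opposite direction (desingularizing a transverse union) and does not supply this.  Until the crossing is excluded, you cannot conclude $\Gamma_i\subset\operatorname{int}B_i$, which is the heart of the matter.  The paper's direct min-max-in-a-ball argument is precisely designed to make this point automatic, so you should either adopt Propositions~\ref{boundary} and~\ref{handlebody2} or supply a complete barrier argument showing that the min-max sphere in $M_i$ cannot meet the strictly stable separating sphere $S$.
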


For results in the case when $M$ contains no stable two-spheres, see \cite{HK}.

For strongly irreducible Heegaard splittings, we can use an iterated min-max procedure to obtain:
\begin{thm}[Index Bounds for Minimal Surfaces Arising from Strongly Irreducible Splittings]\label{indexstrong}
Let $\Sigma$ be a strongly irreducible Heegaard splitting surface in a manifold endowed with a bumpy metric.  Then from an iterated min-max procedure we obtain the existence of a family of pairwise disjoint minimal surfaces $\{\Gamma_1,...,\Gamma_k\}$ obtained from $\Sigma$ after neck-pinch surgeries, 
so that 
\begin{equation}\label{generic2}
\sum_{i=1}^k\text{\emph{index}}(\Gamma_i) = 1.
\end{equation}
\end{thm}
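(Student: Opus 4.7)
The plan is to run the Simon-Smith min-max procedure of Theorem \ref{ss} on the saturated family $\Lambda_H$ attached to the strongly irreducible splitting, and then establish the three ingredients of the Marques-Neves program (upper index bound, lower index bound, and multiplicity one for the unstable component) by adapting the arguments that prove Theorem \ref{main}. By Theorem \ref{ss}, some min-max sequence converges as varifolds to $\sum_{j=1}^k n_j \Gamma_j$ with $W(M,\Lambda_H) = \sum_j n_j \mathcal{H}^2(\Gamma_j)$. Strong irreducibility of $H$, combined with the topological neck-pinch analysis of \cite{k}, implies that $\cup_j \Gamma_j$ is obtained from $\Sigma$ by finitely many neck-pinch surgeries, which already gives the structural conclusion of the theorem independently of index considerations.

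For the upper bound $\sum_i \text{index}(\Gamma_i) \leq 1$, I would invoke the Marques-Neves deformation theorem from \cite{mn4}. That argument uses only the width-realization property and the fact that $\Lambda_H$ is a one-parameter saturated family; it is insensitive to the particular topological type of the sweepout slices, so it transfers from the two-sphere sweepouts of $\mathbb{S}^3$ to $\Lambda_H$ without essential change. Bumpiness of the metric rules out degenerate Jacobi fields and yields the inequality.

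The heart of the argument is the lower bound, ruling out the possibility that every $\Gamma_j$ is stable. Here I would apply the interpolation result described as the main technical contribution of the paper: if each $\Gamma_j$ were stable, a small neighborhood of $\cup_j n_j\Gamma_j$ would admit a foliation by surfaces of strictly smaller area; using strong irreducibility of $H$, these nearby level sets can be assembled into an admissible sweepout in $\Lambda_H$ with supremum slice area strictly below $W(M,\Lambda_H)$, contradicting the definition of width. The same interpolation machinery, applied to an unstable sheet that appears with $n_j \geq 2$, produces a competitor sweepout of smaller mass and thereby forces multiplicity one. Combining these three ingredients gives $\sum_i \text{index}(\Gamma_i) = 1$.

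The main obstacle is making the interpolation step global for higher-genus splittings. In the $\mathbb{S}^3$ case, the Jacobi-field foliation of a stable minimal two-sphere gives nearby two-spheres, which automatically lie in the sweepout class. For an arbitrary strongly irreducible $H$, a nearby level set need not be isotopic to $H$, and the stable components of the limit may separate $M$ into regions of different topological type. This is precisely where the \emph{iteration} enters: I would proceed component by component, cutting $M$ along each stable $\Gamma_j$ and running the interpolation inside each complementary region using the induced sweepout structure, then gluing across the stable surfaces (which are two-sided by bumpiness) to produce a single competitor in $\Lambda_H$. Strong irreducibility is used at each gluing step to guarantee that the assembled family remains a genuine sweepout by surfaces isotopic to $H$ up to neck-pinches, so that the contradiction with the width can be drawn. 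Once this global interpolation is in place, the proof concludes as in the sphere case.
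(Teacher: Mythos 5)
Your proposal and the paper's proof share the upper‐bound ingredient (both invoke Marques--Neves' Deformation Theorem A), and both correctly locate the difficulty in the lower bound. But the route you take through the lower bound has a genuine gap, and it is precisely the gap you yourself flag at the end: the interpolation step for higher-genus splittings.

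The paper's interpolation machinery --- Proposition \ref{main_deformation}, Proposition \ref{prop: multiple components}, and especially Proposition \ref{joining two sides} (which is what you would need to ``glue across the stable surfaces to produce a single competitor in $\Lambda_H$'') --- is only established when the stable minimal surfaces in question are spheres and tori, and the ambient manifold is $S^3$ or a lens space. The argument relies on the Light Bulb Theorem to untangle the knotted tubes joining graphical sheets over a stable \emph{sphere}; for a positive-genus stable sheet with multiplicity $\geq 2$, the same tube-sliding does not apply, and indeed the authors explicitly leave the general case as a conjecture at the end of Section~\ref{multiplecomponents}. Your plan to ``proceed component by component, cutting $M$ along each stable $\Gamma_j$ and running the interpolation inside each complementary region'' would require precisely this unavailable higher-genus interpolation.

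The paper avoids the issue by a structurally different kind of iteration. Rather than assembling a single competitor sweepout in $\Lambda_H$ (your route), it iterates the \emph{min-max procedure itself}: run min-max once (Proposition \ref{handlebody}), and if the result is a strictly stable surface, set up a \emph{new} min-max problem inside the handlebody that stable surface bounds, and repeat (Proposition \ref{handlebody2}). The crucial technical point replacing interpolation in positive genus is the genus bound \eqref{gb} from Theorem \ref{ss}: if the new width were realized by the boundary $\partial H$ of genus $g\geq 1$ with multiplicity $n\geq 2$, the genus bound $ng\leq g$ would fail, and multiplicity $n=1$ is ruled out because Proposition \ref{boundary} forces the width to be strictly greater than $\mathrm{Area}(\partial H)$. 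So the min-max limit must have a component in the interior, and only the \emph{sphere} case genuinely needs the Deformation Theorem. Termination of the iteration is supplied by the Nesting Proposition \ref{nested} (bumpiness plus the non-collapsing Lemma \ref{volumelowerbound}), not by a width contradiction. In short: you would need to either prove the higher-genus analogue of Proposition \ref{joining two sides} (open) or, as the paper does, replace the interpolation step by the genus-bound argument inside an iterated handlebody min-max and use nesting to terminate.
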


Using Theorem \ref{main} together with the Catenoid Estimate \cite{KMN}, we obtain:

\begin{thm}\label{rp3}
Let $M$ be a Riemannian $3$-manifold diffeomorphic to $\mathbb{RP}^3$ endowed with a bumpy metric.  Then $M$ contains a minimal index $1$ two-sphere or minimal index $1$ torus.  
 \end{thm}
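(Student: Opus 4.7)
My plan is to apply Theorem \ref{indexstrong}, the strongly-irreducible analogue of Theorem \ref{main}, to the genus one Heegaard splitting of $\mathbb{RP}^3$, and to use the Catenoid Estimate of \cite{KMN} to rule out one-sided components in the resulting min-max limit. The standard genus one Heegaard splitting of $\mathbb{RP}^3 = L(2,1)$ is strongly irreducible: on the Heegaard torus, the boundary curves of the meridian disks of the two Heegaard solid tori have algebraic intersection number $|p|=2$ and so cannot be made disjoint. Theorem \ref{indexstrong} therefore yields pairwise disjoint embedded minimal surfaces $\{\Gamma_1,\ldots,\Gamma_k\} \subset \mathbb{RP}^3$ with $\sum_i \mathrm{index}(\Gamma_i) = 1$, associated width $W = \sum_i n_i \mathcal{H}^2(\Gamma_i)$, and the Simon--Smith genus bound
\[ \sum_{i\in O} n_ig(\Gamma_i) + \tfrac12\sum_{i\in N} n_i(g(\Gamma_i)-1) \leq 1, \]
so each $\Gamma_i$ is a sphere, a torus (with $n_i=1$), an $\mathbb{RP}^2$, or a Klein bottle.

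I would then use the Catenoid Estimate to rule out one-sided components. By mod-$2$ area minimization in the non-trivial class of $H_2(\mathbb{RP}^3;\mathbb{Z}/2) \cong \mathbb{Z}/2$, I obtain an embedded strictly stable (by bumpiness) minimal $\mathbb{RP}^2$ of least area $a$. The Catenoid Estimate then constructs a Heegaard-torus sweepout of $\mathbb{RP}^3$ with maximum slice area strictly less than $2a$: one desingularizes, using a catenoid neck, the two-sheeted approximation of this $\mathbb{RP}^2$ by the sphere boundaries of its shrinking twisted $I$-bundle tubular neighborhoods, then interpolates across the complementary handlebody into the full Heegaard torus sweepout. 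This forces $W<2a$. An identical argument applied to a least-area Klein bottle (if one exists in $\mathbb{RP}^3$) yields the analogous strict bound. Any one-sided $\Gamma_i$ would appear with even multiplicity $n_i \geq 2$ and area at least the least area in its topological class, yielding $W \geq n_i \mathcal{H}^2(\Gamma_i) \geq 2a$ (respectively twice the least Klein bottle area), contradicting the strict upper bound. Hence every $\Gamma_i$ is orientable; the genus bound leaves only spheres together with at most one torus, and the total-index-one condition isolates a unique unstable component --- an embedded index one minimal sphere or torus in $\mathbb{RP}^3$, as required.

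The main technical obstacle is producing the strict inequality $W<2a$ via the Catenoid Estimate: since the twisted $I$-bundle neighborhood of an $\mathbb{RP}^2$ in $\mathbb{RP}^3$ has sphere rather than torus boundary, the catenoid neck must both resolve two sheets of the approximating spheres into a torus and match a Heegaard torus sweepout across the complementary solid torus, all without exceeding the $2a$ area threshold. Verifying that the resulting family is an admissible element of $\Lambda_H$ and preserves the strict inequality is the crux; the rest of the argument is a combinatorial use of the genus bound together with the index bound from Theorem \ref{indexstrong}.
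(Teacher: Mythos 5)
Your approach shares with the paper the key role of the Catenoid Estimate, but you apply it directly in $\mathbb{RP}^3$ to the area-minimizing $\mathbb{RP}^2$, whereas the paper passes to the double cover $\mathbb{S}^3$ first — and this difference is not cosmetic. The step that does not go through is the assertion that the Catenoid Estimate produces a Heegaard-torus sweepout with all slice areas strictly below $2a$, where $a$ is the least area of an $\mathbb{RP}^2$. The ``two-sheeted approximation'' consists of the spheres $S_t = \partial N_t(\Gamma)$, whose areas tend to $2a$ as $t\to 0$; the sign of this approach is governed by the stability of the lift $\tilde\Gamma\subset\mathbb{S}^3$ of $\Gamma$. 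Lifting, one has $|S_t| = |\tilde\Gamma_t|$, and since $\Gamma$ being a stable one-sided surface only controls the $\mathbb{Z}/2$-anti-invariant part of the Jacobi form of $\tilde\Gamma$, the full sphere $\tilde\Gamma$ may well be strictly stable; in that case $|S_t| > 2a$ for all small $t\neq 0$, and no catenoid desingularization (which only trades an $O(\varepsilon^2)$ catenoid neck against two $O(\varepsilon^2)$ disks) can force the sweepout below $2a$. The Catenoid Estimate in \cite{KMN} delivers a definite area gap precisely by exploiting instability (as in the positive-Ricci case the paper's Remark alludes to). Without ruling out strict stability of $\tilde\Gamma$, the inequality $W<2a$ is simply not available.

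This is exactly the case the paper handles by a different mechanism: when $\tilde\Gamma$ is strictly stable in $\mathbb{S}^3$, one forgets the catenoid entirely and instead applies Theorem \ref{indexspheres} to each of the two balls $\tilde B_1,\tilde B_2$ bounded by $\tilde\Gamma$, producing an index-$1$ minimal two-sphere in the interior of each; since such a sphere lies in a fundamental domain of the deck action it descends to an index-$1$ sphere in $\mathbb{RP}^3$ and the theorem follows. The Catenoid Estimate is reserved for the complementary case where $\tilde\Gamma$ is unstable. Your proof collapses these two cases into one and does not address the stable-lift alternative, so it has a genuine gap. A secondary, less serious point: you should also be careful citing Theorem \ref{indexstrong} as a black box here, because the obstruction to its iteration in $\mathbb{RP}^3$ is exactly the one-sided Heegaard surface you are trying to rule out, so the reasoning risks circularity; and the Klein bottle case you guard against is in fact vacuous, since by Bredon--Wood $\mathbb{RP}^2$ is the only closed non-orientable surface embeddable in $\mathbb{RP}^3$.
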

 \begin{rmk}
 In the special case that $\mathbb{RP}^3$ is endowed with a metric of positive Ricci curvature, it was proved in \cite{KMN} that it contains a minimal index $1$ torus.  
 \end{rmk}

In this paper, we also confirm a long-standing conjecture of Pitts-Rubinstein \cite{R1} in hyperbolic manifolds:

 \begin{thm}[Pitts-Rubinstein Conjecture (1986)] \label{pr}
 Let $M$ be a hyperbolic $3$-manifold and $\Sigma$ a strongly irreducible Heegaard surface.  Then either 
 \begin{enumerate}
 \item $\Sigma$ is isotopic to a minimal surface of index $1$ or $0$ or 
 \item after a neck-pinch performed on $\Sigma$, the resulting surface is isotopic to the boundary of a tubular neighborhood of a stable one sided Heegaard surface.
 \end{enumerate}
 If $M$ is endowed with a bumpy metric, in case (1) we can assume the index of $\Sigma$ is $1$.
  \end{thm}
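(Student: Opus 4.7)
The plan is to apply Theorem \ref{indexstrong} to the strongly irreducible Heegaard surface $\Sigma$ and then use the special geometry of hyperbolic $3$-manifolds together with strong irreducibility to force the resulting minimal limit into one of the two listed shapes. Assuming first that the metric is bumpy, Theorem \ref{indexstrong} produces a family $\{\Gamma_1,\ldots,\Gamma_k\}$ of pairwise disjoint minimal surfaces with positive integer multiplicities $n_i$, obtained from $\Sigma$ by finitely many neck-pinch surgeries, whose indices sum to one. Thus exactly one component $\Gamma_*$ is unstable (of index one) and every other component is stable.

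I would next rule out low-genus pieces. Since $M$ is closed hyperbolic, $\pi_2(M)=0$, so any embedded minimal $2$-sphere in $M$ would bound a ball, which is impossible by the maximum principle applied to the distance from an interior point of the ball; likewise no embedded $\mathbb{RP}^2$ exists, since it would lift to a closed minimal $2$-sphere in $\mathbb{H}^3$. Hence every orientable component of the min-max limit has genus at least $1$, and every non-orientable component has at least two crosscaps, which together with the genus bound \eqref{gb} sharply restricts how many components can appear.

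The heart of the argument is a Casson--Gordon style topological analysis. By the neck-pinch recovery structure of \cite{k}, $\Sigma$ is isotopic to a surface obtained by adding finitely many tubes to a collection of parallel copies of the $\Gamma_i$. Any such tube whose core is a nontrivial curve would produce compressing disks on the two sides of $\Sigma$ that meet along the compressed neck, violating strong irreducibility. Running through the allowed possibilities, I expect only two configurations to survive: (a) $k=1$, $n_1=1$, $\Gamma_1$ is two-sided and no neck-pinch is needed, in which case $\Sigma$ is isotopic to the index-one minimal surface $\Gamma_1$, giving case (1); or (b) some non-orientable $\Gamma_*$ appears with multiplicity $2$ as the min-max limit, is necessarily stable (the unstable direction for the sweepout is the transverse sheet separation rather than a Jacobi field on $\Gamma_*$ itself), and $\Sigma$ is a single neck-pinch away from $\partial N(\Gamma_*)$, giving case (2).

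For a general (not necessarily bumpy) hyperbolic metric I would instead apply the index-plus-nullity version of the iterated min-max construction, modeled on \eqref{nongeneric}, and rerun the same topological dichotomy; this allows the index in case (1) to drop to $0$ as permitted by the statement. The main obstacle is the analysis of the third paragraph: rigorously verifying that strong irreducibility, combined with the genus bound \eqref{gb} and the index-one ceiling from Theorem \ref{indexstrong}, genuinely eliminates every mixed or higher-multiplicity configuration other than the two listed. This is the step at which the Pitts--Rubinstein dichotomy becomes sharp, and it is where the topological structure theorem of \cite{k} must be combined most carefully with Casson--Gordon type arguments on strongly irreducible Heegaard splittings.
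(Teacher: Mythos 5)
Your proposal diverges substantially from the paper's proof, and the step you yourself flag as the main obstacle is precisely where the argument cannot be completed as sketched.

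The central issue is that Theorem~\ref{indexstrong} only produces a minimal surface $\Gamma_*$ of index $1$ obtained from $\Sigma$ \emph{after finitely many neck-pinches}; nothing forces $\Gamma_*$ to have genus $g$, and if its genus has dropped there is no immediate reason for it to be isotopic to $\Sigma$. Your third paragraph tries to recover the isotopy by a Casson--Gordon style analysis of ``parallel copies of the $\Gamma_i$ joined by tubes,'' but as you acknowledge this is exactly the step that is not verified, and it is not clear it can be: the neck-pinch structure theorem of~\cite{k} describes what $\Sigma$ limits to, not a canonical tube-decomposition that strong irreducibility alone can prune down to $\Gamma_*$. In particular, after the first neck-pinch the unstable component may bound a handlebody deep inside $M$, and there is no way to ``undo'' that via purely topological arguments about compressing disks without producing new minimal surfaces. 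The paper deals with this by a genuinely different mechanism: after the first min-max, the unstable component is replaced by a nearby strictly stable surface (via~\cite{MSY}), one restricts to the smaller manifold $N_1$ with strictly stable boundary, and runs a \emph{boundary} min-max (Proposition~\ref{boundary}) to get a new index-$1$ surface in the interior, iterating. Finiteness of this iteration is where the real work is: it relies on the non-collapsing Lemma~\ref{volumelowerbound} and on Proposition~\ref{nested}, which says an infinite nested family of stable minimal surfaces in a bumpy metric would converge to a surface with a nontrivial Jacobi field. In the hyperbolic case the needed a priori area bound $|\Sigma_{g'}|<4\pi(g'-1)$ from Gauss--Bonnet makes this convergence argument go through; your proposal never invokes either the nestedness proposition or the Gauss--Bonnet bound, and without them there is no mechanism to terminate the procedure.

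There are also two smaller issues. First, the dichotomy in the paper is set up by \emph{assuming at the outset} either that a neck-pinch to a one-sided Heegaard boundary is possible (case (2), handled by a Meeks--Simon--Yau minimization) or that it is not; case (2) does not arise as an output of the min-max itself, whereas your sketch treats ``some non-orientable $\Gamma_*$ with multiplicity $2$'' as a configuration the min-max produces, which is not what Theorem~\ref{indexstrong} gives. Second, for the non-bumpy case the paper approximates by bumpy metrics $g_i\to g$, obtains index-$1$ surfaces $\Sigma_i$ isotopic to $\Sigma$ for each $i$, and passes to a limit which by strong irreducibility converges with multiplicity one; your proposed ``index-plus-nullity version'' of the iteration is not developed in the paper and would need its own justification. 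The spheres/$\mathbb{RP}^2$ exclusion in your second paragraph is fine in spirit (though it is cleaner to say every closed minimal surface in a hyperbolic $3$-manifold has $\chi<0$ by Gauss--Bonnet), but it does not substitute for the missing iteration and finiteness argument.
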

  
 \begin{rmk}
 Recall that a one-sided Heegaard surface $\Sigma$ embedded in $M$ is a non-orientable surface such that $M\setminus\Sigma$ is an open handlebody.  An example is $\mathbb{RP}^2\subset\mathbb{RP}^3$ as $\mathbb{RP}^3\setminus\mathbb{RP}^2$ is a three-ball.
 \end{rmk}
 
 A Heegaard splitting is \emph{strongly irreducible} if every curve on $\Sigma$ bounding an essential disk in $H_1$ intersects every such curve bounding an essential disk in $H_2$.  Strongly irreducible Heegaard splittings were first introduced by Casson-Gordon \cite{CG1}, who proved that in non-Haken $3$-manifolds, any splitting can be reduced until it is strongly irreducible.  Thus lowest genus Heegaard splittings in any spherical space form are strongly irreducible.  

Even though they are not hyperbolic manifolds, we still obtain
\begin{thm}[Heegaard tori in lens spaces]
Let $L(p,q)$ be a lens space other than $\mathbb{RP}^3$.   Then $L(p,q)$ contains an index $1$ or $0$ Heegaard torus.  If the metric is assumed bumpy, then the index can be assumed to be $1$.
\end{thm}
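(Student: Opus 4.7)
For $L(p,q)\neq\mathbb{RP}^3$ with $p\geq 3$, the genus-$1$ Heegaard splitting $L(p,q)=V_1\cup_T V_2$ is strongly irreducible: the meridian circles of $V_1$ and $V_2$ on $T$ have geometric intersection number $p\geq 3$ and cannot be isotoped to be disjoint. I would apply Theorem~\ref{indexstrong} to $T$ to obtain a family $\{\Gamma_1,\dots,\Gamma_k\}$ of pairwise disjoint minimal surfaces produced from $T$ by neck-pinch surgeries, with total index $1$ in the bumpy case. The genus bound~\eqref{gb} specialized to $g=1$ restricts the orientable components to spheres or a single torus of multiplicity one, while any non-orientable components must have small genus and can occur only when $p$ is even.

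The key step is to show that at least one $\Gamma_i$ is an orientable torus. Two degenerations must be ruled out. The first is that the family consists entirely of minimal $2$-spheres: each such sphere bounds a ball (since $\pi_2(L(p,q))=0$), so neck-pinching $T$ down to a union of null-homotopic spheres would require disjoint essential compressing disks on the two sides of $T$, contradicting strong irreducibility; equivalently, the resulting varifold would represent a trivial mod-$2$ class, incompatible with the non-triviality of the sweepout class in $\Lambda_H$. The second degeneration is a one-sided minimal surface taken with multiplicity two, which can occur only if $p$ is even. Here I would mimic the proof of Theorem~\ref{rp3} and apply the Catenoid Estimate~\cite{KMN} in a tubular neighborhood of the one-sided surface to construct a Heegaard torus sweepout with maximum area strictly less than twice the area of the one-sided surface, thereby forcing the width to be realized by a genuine orientable torus.

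Having extracted an orientable minimal Heegaard torus $\Gamma_{i_0}$, the total index bound from Theorem~\ref{indexstrong} gives $\mathrm{index}(\Gamma_{i_0})\leq 1$; in the bumpy case, genericity together with uniqueness of the unstable component of the min-max limit upgrades this to index exactly $1$. For non-bumpy metrics, I would approximate by bumpy ones, produce an index-$1$ Heegaard torus for each approximation, and extract a subsequential limit---upper semicontinuity of the index then yields a minimal Heegaard torus of index $0$ or $1$. The principal obstacle is the first alternative above: translating strong irreducibility of $T$ into a rigorous obstruction preventing the min-max procedure from degenerating entirely into null-homotopic minimal spheres, which requires careful bookkeeping of the mod-$2$ sweepout class through the iterated neck-pinch process.
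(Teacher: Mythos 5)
Your key step---ruling out the degeneration of the min-max limit to a union of null-homotopic spheres---does not work as stated. The claim that ``neck-pinching $T$ down to a union of null-homotopic spheres would require disjoint essential compressing disks on the two sides of $T$'' is false: a \emph{single} essential compression of the Heegaard torus along a meridian disk of one solid torus already produces a $2$-sphere, and since $\pi_2(L(p,q))=0$ that sphere bounds a ball on that very same side. No compression on the opposite side is needed, so strong irreducibility gives no obstruction. The mod-$2$ homology fallback also fails: for $p$ odd one has $H_2(L(p,q);\mathbb{Z}_2)=0$, so the Heegaard torus itself is already null-homologous mod $2$, and the non-triviality of a family in $\Lambda_H$ lives in $\pi_1$ of the cycle space (Almgren's isomorphism with $H_3(M;\mathbb{Z}_2)$), not in $H_2$. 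There is genuinely no \emph{a priori} obstruction to a min-max slice degenerating to a union of null-homotopic spheres; you correctly flag this as the principal obstacle, but the obstruction you propose does not exist.

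The paper's actual resolution is indirect and iterative. Rather than forbidding the degeneration, one accepts it and iterates: if min-max produces only stable spheres, remove the handlebodies they bound, invoke Proposition~\ref{boundary} so that the new width strictly exceeds the (strictly stable) boundary areas, run min-max again in the smaller manifold, and so on. Assuming for contradiction that no stage ever yields an index-$1$ genus-$1$ surface isotopic to the Heegaard torus, one obtains an infinite sequence of nested manifolds with strictly stable spherical boundary; the Nesting Proposition~\ref{nested} (which forces a Jacobi field, contradicting bumpiness) combined with the non-collapsing Lemma~\ref{volumelowerbound} (which kills a non-nested infinite family by a volume-sum argument) then gives the contradiction. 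This iteration $+$ nesting $+$ volume mechanism is the substantive content of the proof and is absent from your proposal. A smaller issue: mimicking Theorem~\ref{rp3} to handle a one-sided component requires passing to a double cover equal to $S^3$, which fails for general lens spaces; but in fact for $L(p,q)\neq\mathbb{RP}^3$ a neck-pinch on a Heegaard torus yields a sphere, a sphere can only be $\partial N(\mathbb{RP}^2)$, and $\mathbb{RP}^2$ does not embed in $L(p,q)$ for $p\geq 3$---so that case simply never arises and requires no Catenoid Estimate.
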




As sketched by Rubinstein \cite{R1}, Theorem \ref{pr} gives a minimal surface proof of Waldhausen's conjecture:
\begin{thm}\label{waldhausen}
Let $M$ be a non-Haken hyperbolic $3$-manifold.  Then $M$ contains finitely many irreducible Heegaard splittings of any given genus $g$.
\end{thm}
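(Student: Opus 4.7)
The plan is to combine Theorem~\ref{pr} with Casson--Gordon's reduction theorem and a compactness argument for minimal surfaces of bounded area and index. First, since $M$ is non-Haken, every irreducible Heegaard splitting $\Sigma$ of $M$ is automatically \emph{strongly} irreducible: otherwise the reduction procedure of Casson--Gordon \cite{CG1} would produce an essential surface in $M$, contradicting the hypothesis. It therefore suffices to bound the number of strongly irreducible Heegaard splittings of genus $g$ up to isotopy.

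Apply Theorem~\ref{pr} to each such $\Sigma$. Either (1) $\Sigma$ is ambient isotopic to a smooth embedded minimal surface $\Gamma\subset M$ of genus $g$ and index at most $1$, or (2) after finitely many neck-pinches $\Sigma$ is isotopic to the boundary of a tubular neighborhood of a stable one-sided minimal Heegaard surface $\Gamma'$, whose topology is controlled by $g$. Because the ambient sectional curvature equals $-1$, the Gauss equation for a minimal surface gives $K_\Gamma \leq -1$ pointwise; Gauss--Bonnet then yields the a priori area bound $\mathcal{H}^2(\Gamma) \leq -2\pi\chi(\Gamma)$, which depends only on $g$ (and similarly for $\Gamma'$).

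With uniform bounds on area, genus, and Morse index, standard compactness results for minimal surfaces in a fixed closed three-manifold imply that only finitely many isotopy classes of such $\Gamma$ and $\Gamma'$ embed in $M$. The map $\Sigma\mapsto\Gamma$ is in fact injective on isotopy classes in case (1), because if two Heegaard surfaces are each ambient isotopic to the same embedded $\Gamma$, they are ambient isotopic to each other, hence define the same Heegaard splitting. In case (2), each $\Gamma'$ determines the Heegaard splitting up to finitely many reverse neck-pinch operations, so the map is finite-to-one. Combining the two cases yields the desired count.

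The main obstacle is case (2): one must show both that stable one-sided minimal surfaces of bounded topology in the fixed hyperbolic manifold $M$ form only finitely many isotopy classes, and that reversing a neck-pinch recovers the ambient isotopy class of the original Heegaard splitting in only finitely many ways. The former follows from the curvature estimates for stable minimal surfaces (Schoen--Simon) combined with the area bound above and the usual smooth compactness; the latter is a purely topological finiteness statement, since the number of neck-pinches equals the genus defect $g-g(\partial N(\Gamma'))$ and only finitely many isotopy classes of un-pinch are possible for each such defect.
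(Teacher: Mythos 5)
The paper does not actually supply a proof of Theorem~\ref{waldhausen}: it only records that the result follows, as sketched by Rubinstein \cite{R1}, from Theorem~\ref{pr}, and separately cites Tao Li's independent combinatorial proof. So there is no internal proof to compare your argument against; I can only evaluate your attempt on its own terms.

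Your skeleton is the right one: non-Haken implies (via \cite{CG1}) that every irreducible splitting is strongly irreducible; Theorem~\ref{pr} then furnishes for each splitting either a minimal representative of index $\leq 1$ or a reduction to the boundary of a twisted $I$-bundle over a stable one-sided minimal surface; and the Gauss equation in sectional curvature $-1$ gives $K_\G \le -1$ on a minimal surface, so Gauss--Bonnet yields the area bound $\mathcal H^2(\G)\le -2\pi\chi(\G) = 4\pi(g-1)$ depending only on $g$.

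However, there are two genuine gaps. First, you claim that ``standard compactness results\ldots imply that only finitely many isotopy classes'' of such minimal surfaces embed in $M$. That conclusion is not a consequence of compactness alone, and you cannot appeal to genericity/bumpiness since the hyperbolic metric is fixed. What compactness (Choi--Schoen/Sharp, using the index bound) gives is a subsequence $\G_i$ converging smoothly, possibly with multiplicity $m$, to a limit $\G_\infty$. One must then \emph{argue} that for $i$ large all the $\G_i$ are pairwise isotopic: if $m=1$ they are normal graphs over $\G_\infty$; if $m\ge 2$ one must rule out disconnected configurations (which cannot be Heegaard surfaces) and show that the only connected possibility is the boundary of a tubular neighbourhood of a one-sided $\G_\infty$ with multiplicity two. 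Nothing in your write-up carries out this step, and without it the conclusion ``finitely many isotopy classes'' is simply asserted.

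Second, and more seriously, your treatment of case (2) is hand-waved. You assert that reversing a neck-pinch on $\partial N(\G')$ can recover the original Heegaard surface in ``only finitely many isotopy classes of un-pinch.'' This is not a purely combinatorial statement; attaching a tube to $\partial N(\G')$ to obtain a Heegaard surface depends on the isotopy class of the co-core arc, and a priori there could be infinitely many such arcs up to ambient isotopy. The finiteness here is where Rubinstein's theory of one-sided Heegaard surfaces is actually needed (for example, his uniqueness results for geometrically incompressible one-sided surfaces in a given $\mathbb Z/2$ homology class and the control they give over the complementary handlebody and its meridian system). As written, this step is a claim, not a proof; you need either to supply the topological argument or cite the relevant result from \cite{R2}.
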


Waldhausen's conjecture was proved by Tao Li (\cite{Li1},\cite{Li2}, \cite{Li3}) using the combinatorial analog of index $1$ minimal surfaces -- \emph{almost normal} surfaces.  For effective versions of Theorem \ref{waldhausen}, see \cite{CG}, \cite{CGK}.



The organization of this paper is as follows. In Section \ref{interpolation} we explain the main ideas and difficulties in our Interpolation Theorem, which roughly speaking allows us to interpolate between a surface $\Gamma$ close as varifolds to a union of strictly stable minimal surface with integer multiplicities ($\Sigma=\sum n_i\Sigma_i$) and something canonical.   In Section \ref{onecomponent} we consider the case that $\Sigma$ is connected, and show how to isotope $\Gamma$ to a union of several normal graphs over $\Sigma$ joined by necks.  In Section \ref{sec:cables} we describe the notion of ``root sliding" which is useful for global deformations.   In Section \ref{opennecks} we introduce the Light Bulb Theorem and its generalizations which enable us to find necks to further reduce the number of graphs of $\Gamma$ over $\Sigma$.  In Section \ref{multiplecomponents} we generalize to the setting when $\Sigma$ consists of several components.   In Section \ref{deformation1} we apply our interpolation result to obtain the lower index bound.  In Section \ref{applications} we use the index bounds, together with some observations regarding nested minimal surfaces and a characterization of minimal surfaces bounding small volumes to prove the 
conjecture of Pitts-Rubinstein.

\begin{rmk}
During the preparation of this article Antoine Song \cite{So} obtained some related results.
\end{rmk}

{\bf Acknowledgements:}
We would like to thank Andre Neves and Fernando C. Marques for their encouragement and several discussions.  We thank Francesco Lin for some topological advice.  D.K. would like to thank Toby Colding for suggesting this problem.  We are grateful to Dave Gabai for several conversations.

\section{Interpolation}\label{interpolation}

In the proof of the lower index bound (Theorem \ref{main}) to rule out obtaining a stable surface with multiplicity, we must deform slices of a sweepout that come near such a configuration. To that end, the main technical tool is to deform a sequence close in the flat topology to a stable minimal surface with multiplicity to something canonical.

\subsection{Marques-Neves squeezing map.} \label{sec:squeezing}
Let $\Si \subset M$ be a smooth two-sided surface
and let $\exp_{\Si}: \Sigma \times [-h,h] \rightarrow M$
denote the normal exponential map. 
Let $N_{\varepsilon}(S) = \exp_{\Si}(\Sigma \times [-\varepsilon,\varepsilon])$ denote an open $\varepsilon$-tubular neighbourhood
of submanifold $S \subset M$.
It will be convenient for the purposes of this paper
to foliate an open neighbourhood of $\Si$
not by level sets of the distance function,
but rather by hypersurfaces with mean curvature vector pointing
towards $\Si$, which arise as graphs 
of the first eigenfunction
of the stability operator over $\Si$.

Such a foliation gives rise to a diffeomorphism
$\phi: \Si \times (-1,1) \rightarrow \Omega_1 \subset N_h(\Sigma)$,
a collection of open neighbourhoods $\Omega_r = \phi(\Sigma \times (-r,r))$
and squeezing maps $P_t(\phi(x,s)) = \phi(x,(1-t)s)$.
Let $P: \Omega_1 \rightarrow \Sigma$ denote the projection map
$P(\phi(x,s)) = x$.
We refer to \cite[5.7]{mn4} for the details of this
construction. We summarize properties of the map $P_t$:


\begin{enumerate}
\item $P_0(x) = x$ for all $x \in  \Omega_1$
and $P_t(x)=x$ for all $x \in \Sigma$ and $0 \leq t< 1$;
\item There exists $h_0>0$, such that $N_{h_0} \subset \Omega_1$ and
for all positive $h<h_0$ there exists
$t(h) \in (0,1)$  with  $P_{t(h)}(N_{h_0})\subset N_{h}$;
\item 
For any surface $S \subset \Omega_1$ and for all 
$t \in [0,1)$ we have 
$Area(P_t(S)) \leq Area(S)$ with equality holding
if and only if $S \subset \Sigma$;

\item 
Let $U \subset \Sigma$ be an open set,
$f:U \rightarrow \mathbb{R}$ be a smooth function with absolute value
bounded by $h_0$ and let $S = \{\phi(x,f(x)): x \in U \}$.
Then we have a graphical smooth convergence of $P_t(S)$ to $U$ as 
as $t \rightarrow 1$.
\end{enumerate}

Property (3) is proved in 
\cite[Proposition 5.7]{mn4}. All other properties follow from the definition.

The importance of the above is that we can use the squeezing map to push 
a surface $S$ in a small tubular neighbourhood of $\Sigma$
towards $\Sigma$ while simultaneously decreasing its area.

In the rest of the paper we will say that a surface $S$ is graphical if it
satisfies
$S = \{\phi(x,f(x)): x \in U \}$ for some function $f$ and a 
subset $U \subset \Sigma$.

\subsection{The case of connected stable minimal surface.}

The following is a special case of our main interpolation result.  Setting $g=0$ in the statement of the proposition and assuming $\Gamma$ is connected, one can interpret it as a quantitative form of Alexander's Theorem.  Yet another way to interpret it is as a kind of Mean Curvature Flow performed ``by hand."

\begin{prop}[Interpolation / Quantitative Alexander Theorem] \label{main_deformation}
Let $\Si$ be a smooth connected orientable surface
of genus $g$,
with a map $P$ satisfying (1) and (2) above.
Let $\G \subset N_{h_0}(\Si)$ be a smooth embedded 
surface, such that each connected component of $\G$ has genus at most $g$.
For every $\delta>0$  there exists an isotopy $\G_t \subset  N_{h_0}(\Si)$ with 
\begin{enumerate}
\item $\G_0 = \G$
\item For each connected component $\G'$  of $\Gamma_1$
either $\G'= \phi (\Sigma \times t)$ for some $t \in [-1,1]$ 
or $\G'$ is contained in a ball 
of radius less than $\delta$
\item $Area(\G_t) \leq Area(\G)+\delta$ for all $t$. \label{eq: delta}
\end{enumerate}
\end{prop}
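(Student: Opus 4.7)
My plan is to combine a squeeze into a thin tubular neighbourhood with a sequence of controlled topological surgeries that reduce $\G$ to the stated normal form. First I would apply an arbitrarily small generic $C^\infty$ perturbation of $\G$ so that the height function $h := s \circ \phi^{-1}|_\G$ is a Morse function on $\G$ with distinct critical values; then I would apply the squeezing map $P_{t(\eta)}$ for some $\eta \ll \delta$ to push $\G$ into the thin neighbourhood $N_\eta(\Si)$. The map $P_t$ is an ambient diffeomorphism for $t \in [0,1)$ by property (1), and the paper's $P$ also satisfies the area non-increasing property (3) recalled in Section~\ref{sec:squeezing}, so this squeeze costs no area. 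After this preparation, $\G$ decomposes away from the finitely many critical points of $h|_\G$ as a disjoint union of ``sheets,'' i.e. local graphs over open subsets of $\Si$, glued at the critical values by caps (at local extrema of $h$) or by $1$-handles (at saddles).

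Next I would process the critical values from bottom to top, performing one local move at each. At an index-$0$ or index-$2$ critical point, a small cap of $\G$ appears or disappears; I would pinch it off by a sharp neck-pinch, producing a nearly-spherical bubble that can then be shrunk into a ball of radius less than $\delta$. At an index-$1$ saddle, $\G$ acquires a neck joining two sheets; here I would apply the Light Bulb Theorem and its generalisations developed in Section~\ref{opennecks} --- if necessary after a preliminary ``root sliding'' from Section~\ref{sec:cables} to untangle the neck from the rest of $\G$ --- to open the neck, separating the two sheets at the cost of a single small bubble. Iterating this procedure over all critical values produces a final $\G_1$ in which every connected component is either a full graph over $\Si$ or lives in a ball of radius less than $\delta$; each full graph $\{\phi(x,f(x))\}$ is then straightened to an exact leaf $\phi(\Si \times \{t\})$ by the one-parameter family $\{\phi(x,(1-s)f(x)+s\,t)\}$, which again does not increase area by property (3).

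To enforce the area condition $\mathrm{Area}(\G_t) \leq \mathrm{Area}(\G) + \delta$, I would perform each of the $N$ local surgeries so sharply that its area cost is at most $\delta/(2N)$, where $N$ is an a priori bound on the number of critical points of $h|_\G$ fixed after the initial perturbation; summed this is at most $\delta/2$, and the remaining $\delta/2$ absorbs the cost of the generic perturbation, the squeeze, and the final straightening. The main obstacle is unquestionably the neck-opening step at saddles: a neck may be knotted around other sheets of $\G$ or linked through handles of $\Si$, and no purely local Light Bulb move applies directly. Overcoming this requires the global root-sliding deformations of Section~\ref{sec:cables}, which exploit the genus constraint $\mathrm{genus}(\G')\leq g$ to rule out the most pathological topological configurations and to untangle each neck by a global isotopy before it is opened. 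This interplay between local Light Bulb-type neck opening and global root-sliding untangling is the central technical novelty of the argument.
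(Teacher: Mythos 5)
Your high-level intuition (squeeze into a thin slab, untangle necks with Light Bulb plus root sliding, open them and straighten graphical pieces) matches the paper in spirit, but the Morse-theoretic organization you propose is genuinely different from the paper's and, as written, leaves several gaps that are exactly the hard points the paper's argument is built to handle.

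First, the paper does not decompose $\G$ via Morse theory of the height function at all. It fixes a triangulation of $\Si$, covers the slab by cells $\Delta_i = \phi(S_i \times [-h,h])$, and works cell by cell. The central device is the notion of \emph{essential multiplicity} $k_{\mathrm{ess}}(\G,i,r')$ and the stacking Lemma~\ref{stacking}, which in a single cell deforms $\G \cap \Delta_i$ to a ``$(\delta,k)$-stacked'' configuration of graphical sheets plus thin tubes plus small closed pieces, with intermediate areas controlled via the blow-down--blow-up Lemma~\ref{radial}. Iterating this over cells, the area drops by a definite amount $c/2 > 0$ whenever the essential multiplicity decreases, which is what forces termination. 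Your height-function decomposition replaces this with a single global Morse filtration, and the corresponding termination argument (``$N$ is an a priori bound on the number of critical points'') breaks down: the Light Bulb untangling of a knotted neck is a \emph{global} isotopy of $\G$, and there is nothing preventing it from creating new critical points of $h|_\G$ at heights you have not yet processed, so the count of ``remaining'' critical points is not monotone under your moves. The paper avoids this entirely by measuring progress with area, not with critical-point count.

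Second, the local move you describe at index $0$ and $2$ critical points --- ``pinch it off by a sharp neck-pinch, producing a nearly-spherical bubble'' --- is a surgery, not an isotopy, and the proposition explicitly asks for an isotopy $\G_t$. A local maximum cap of $h|_\G$ on a connected surface is not joined to the rest of $\G$ by a thin neck; it is joined by sheets, and you cannot separate it by isotopy. If what you intend is an isotopy that shrinks the cap but keeps it attached, you must say so, and then its fate has to be tracked through all subsequent moves rather than being discarded as a bubble; the paper's $(\delta,k)$-stacked normal form tracks these as the pieces $X$ and $Y$ and never disconnects anything. Similarly, at a saddle, ``separating the two sheets at the cost of a single small bubble'' would be a surgery. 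What the paper actually does after the Light Bulb move is isotope the (now vertical, single-cell) neck so that the two adjacent sheets merge into one graph --- this is an isotopy, and it \emph{decreases} area by roughly the area of one sheet, which is what drives termination.

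Third, the area budget ``$\delta/(2N)$ per surgery'' is asserted but not justified. The dangerous intermediate stages are precisely the Light Bulb untangling isotopies, which drag a neck across the whole of $\Si$ and can pass it over other sheets. The paper's control here is the neck-squeezing Lemma~\ref{neck squeeze} (make the cable arbitrarily thin so its sliding costs $O(\varepsilon^2)$ area) and the root-sliding Lemma~\ref{lem: sliding}, together with the blow-down--blow-up Lemma~\ref{radial} for anything localized in a ball. You invoke root sliding, but the proof has to show that the neck to be opened can actually be \emph{put into cable form first} (Lemma~\ref{lem: tree to necks}), and that the whole sequence of moves can be run within a fixed area budget; this is exactly where the $(\delta,k)$-stacked form is needed as a precondition, and it does not come for free out of a generic Morse filtration.

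Finally, a smaller point: the proposition hypotheses only grant properties (1) and (2) of $P$, not the area monotonicity (3) that you invoke for the initial squeeze. In the paper's construction $P$ does satisfy (3), so this is arguably a slip in the statement, but a careful proof should either cite (3) or note that the squeeze's area cost can be absorbed into $\delta$ by continuity.

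In summary, the Morse-theoretic filtration is an interesting alternative organizing principle, but as written it neither has a correct termination argument nor reproduces the quantitative area control that the paper achieves through the cell-by-cell stacking lemma, essential multiplicity, and neck-squeezing machinery. These are not cosmetic details; they are the content of the proposition.
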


The reason that the $\delta$-constraint is important in (3) is that we will be gluing this isotopy into sweepouts with maximal area approaching the width $W$ and we want the maximal area of the resulting sweepout to still be $W$.

It follows from Alexander's theorem that any embedded two-sphere in $N_\varepsilon(\Sigma)\cong S^2\times [0,1]$ can be isotoped to either a round point or else to $\Sigma$ itself.  The difficulty is to obtain such an isotopy obeying the area constraint \eqref{eq: delta}.

It is instructive to consider the analogous question in $\mathbb{R}^3$ to that addressed in  Proposition \ref{main_deformation}.  Suppose one is given two embeddings $\Sigma_0$ and $\Sigma_1$ of two-spheres into $\mathbb{R}^3$.   We can ask whether for any $\delta>0$ there exists
an isotopy $\Sigma_t$ from $\Sigma_0$ and $\Sigma_1$ obeying the constraint (assuming $|\Sigma_1|>|\Sigma_0|$):
\begin{equation}
|\Sigma_t|\leq |\Sigma_1|+\delta \mbox{ for all } t.
\end{equation}

It is easy to see that the answer is ``yes."  Namely, one can even do better and find an isotopy satisfying 
\begin{equation}
|\Sigma_t|\leq |\Sigma_1| \mbox{ for all } t.
\end{equation}

To see this, one can first enclose $\Sigma_0$ and $\Sigma_1$ in a large ball about the origin $B_R$.   By Alexander's theorem there is an isotopy $\phi_t$ between $\Sigma_0$ and $\Sigma_1$ increasing area by a factor at most $A$ along the way.   First shrink $B_R$ into $B_{R/A}$, then perform the shrunken isotopy $(1/A)\phi_t$ on $B_{R/A}$, and then rescale back to unit size.

Of course, in $3$-manifolds that we must deal with in Proposition \ref{main} are $\mathbb{S}^2\times [0,1]$ in which one does not have good global radial isotopies to exploit.  However, the same idea of shrinking still applies if we first work locally in small balls to ``straighten" our surface.  We can also use the squeezing map to repeatedly press our surface closer to $\Sigma$ in the flat topology while only decreasing area.  

Let us explain the ideas in our proof of Proposition \ref{main_deformation} in more detail.
There are two main steps.  In the first, we introduce
a new local area-nonincreasing deformation process in balls.  The end result of applying this process in multiple balls centered around $\Sigma$ is to produce an isotopic surface $\Gamma$ consisting of $k$ parallel graphical sheets to $\Sigma$ joined by (potentially very nastily) nested, knotted and linked tubes.   

The local deformation we introduce exploits the fact that in balls, we can using Shrinking Isotopies to ``straighten" the surface while obeying the area constraint (a similar idea was used by Colding-De Lellis \cite{cd} in proving the regularity of $1/j$-minimizing sequences).   Our deformation process is a kind of discrete area minimizing procedure, somewhat akin to Birkhoff's curve shortening process.  In the process, it``opens up" any 
 folds or unknotted necks that are contained in a single ball. However,
 at this stage we can not open necks like on Figure \ref{fig:essential}.

After the first stage of the process, we are left with $k$ parallel graphical sheets arranged about $\Sigma$ joined by potentially very complicated necks.  If $k$ is $1$ or $0$, the proposition is proved. If not, the second step is to use a global deformation to deform the surface through sliding of necks to one in which two parallel sheets are joined by a neck contained in a single ball.  Then we go back to Step 1 to open these necks.  After iterating, eventually $k$ is  $1$ or $0$.

The second stage is complicated by the fact that the necks joining the various sheets can be nastily nested, knotted or linked.   We we need generalizations of the Light Bulb Theorem in topology to untangle this morass of cables and find a neck to open.
The version of the Light Bulb Theorem that will be most useful to us
is the following
(see Theorem \ref{general}).
Given a 3-manifold $M$ and two arcs, $\alpha$ and $\beta$, with boundary
points in $\partial M$ assume that one of the boundary points of $\alpha$
lies in the boundary component of $M$ diffeomorphic to a sphere. 
Then $\alpha$ and $\beta$ are isotopic as free boundary curves
if and only if they are homotopic as free boundary curves.
We will apply this theorem in the situation when $\alpha$
is a core arc of a ``cable", a collection of (partially) nested necks
in the tubular neighbourhood of $\alpha$.
In section \ref{sec:cables} we define cables and prove some auxiliary 
lemmas which allows us to treat these collections of tubes
almost as if it was an arc attached to the surface.

\begin{figure} 
   \centering	
	\includegraphics[scale=0.75]{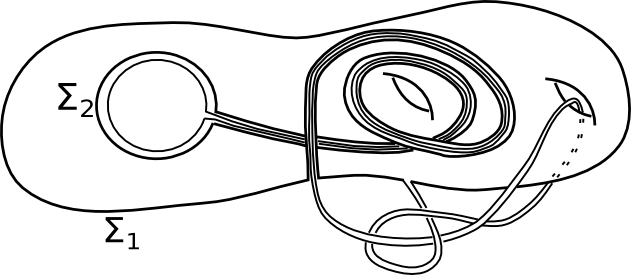}
	\caption{Surface $\G$ is within $\varepsilon$
	(in varifold norm) from $\Si_1 +2 \Si_2$, where $\Si_1$ is a stable
	minimal surface of genus 2 and $\Si_2$ is a stable minimal sphere. We can isotop
	$\G$ to $\Si_1$ while increasing its area by an arbitrarily small amount.}
	\label{fig:knotted}
\end{figure}

\subsection{The case of multiple connected components.}

Proposition \ref{main_deformation} deals with the situation
when surface $\G$ is contained in a tubular neighbourhood of
a connected 
stable minimal surface $\Si$. In general, we need to consider a situation when 
$\G$ clusters 
around a minimal surface $\Si$ that has multiple connected components.
This is illustrated on Figure \ref{fig:knotted}.
Surface $\G$ is mostly contained in the tubular neighbourhood
of minimal surfaces $\Si_1$ and $\Si_2$, while the part of
$\G$ outside of $N_h(\Si_1 \cup \Si_2)$ looks like a collection
of thin tubes that can link with each other and knot around
handles of $\Si_1$.
In this setting we prove
the following proposition.

\begin{prop}[Interpolation near disconnected stable minimal 
surface] \label{main_multiple}
Let $\Si_1$, ..., $\Si_k$ be pairwise disjoint embedded two-sided stable minimal surfaces in a $3$-manifold $M$ and denote $\Sigma:=\cup_i\Si_i$.
 
There exist $\varepsilon_0 >0$ and $h_0>0$ such that for all
$\varepsilon \in (0, \varepsilon_0)$ and $h \in (0,h_0) $ the following holds.
If $\G$ satisfies 

a) $Area (\G \setminus N_{h}(\Si))< \varepsilon$;

b) $genus(N_h(\Si_i) \cap \G) \leq genus(\Si_i)$ for each $i$;

then for every $\delta>0$  there exists an isotopy $\G_t$
with 
\begin{enumerate}
\item $\G_0 = \G$
\item $Area(\G_1 \setminus N_h(\Si)) < \delta$
\item $Area(\G_t) \leq Area(\G)+\delta$ for all $t$. 
\item $\Gamma_1$ is a surface consisting of some subcollection of the set $\Si$, joined by thin necks.
\end{enumerate}
\end{prop}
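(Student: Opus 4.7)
The plan is to reduce Proposition~\ref{main_multiple} to the connected case (Proposition~\ref{main_deformation}) by localizing the interpolation to each tubular neighborhood $N_h(\Si_i)$ and then handling the connecting tubes using the cable machinery of Section~\ref{sec:cables}. Write $\G^i = \G \cap N_h(\Si_i)$ and $\G^{\mathrm{out}} = \G \setminus N_h(\Si)$. Since $\mathrm{Area}(\G^{\mathrm{out}}) < \varepsilon$, by choosing $\varepsilon_0$ small compared to $h_0$ and the injectivity radius of $M$, the monotonicity formula forces $\G^{\mathrm{out}}$ to consist of a disjoint union of small-diameter pieces with boundary on $\partial N_h(\Si)$. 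Covering $M \setminus N_h(\Si)$ by finitely many small balls and applying the local area-nonincreasing shrinking deformation that is the workhorse of Proposition~\ref{main_deformation}, we first isotope $\G^{\mathrm{out}}$ so that it becomes an arbitrarily thin system of tubes whose cores are a fixed collection of arcs joining points of $\partial N_h(\Si)$.

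Next we would like to apply Proposition~\ref{main_deformation} to each $\G^i$ inside $N_h(\Si_i)$. Because $\G^i$ has boundary on $\partial N_h(\Si_i)$, the proposition does not literally apply, so we first straighten the parts of $\G^i$ close to $\partial N_h(\Si_i)$ into nearly-vertical cylindrical collars matching $\G^{\mathrm{out}}$, cap these collars off inside a slightly thicker tubular neighborhood by small disks of negligible area, and then apply Proposition~\ref{main_deformation} to the resulting closed surface. Hypothesis~(b) ensures that the required genus bound survives the capping. The conclusion forces each non-trivial component of the output to be either a parallel graph over $\Si_i$, which we push onto $\Si_i$ itself via the squeezing map $P_t$, or a piece contained in a small ball. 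Uncapping and restoring the attaching tubes leaves each $N_h(\Si_i)$ with a single graphical copy of $\Si_i$ together with small-diameter debris and the tubes reaching $\partial N_h(\Si_i)$. The global rescaling/shrinking trick explained in the introduction keeps the total area increase of this stage below $\delta/2$.

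What remains is a surface consisting of at most one graphical copy of each $\Si_i$ joined by a system of thin cables whose core arcs connect various components of $\Si$ and possibly link around their handles. This is the main obstacle: unlike in Proposition~\ref{main_deformation}, the cables can now simultaneously straddle several distinct $\Si_i$'s and can be nested, knotted, or linked in complicated ways, so the local analysis inside a single tubular neighborhood is insufficient. To untangle them we appeal to Theorem~\ref{general}: whenever a core arc has at least one endpoint meeting a sphere-like boundary component, such as the boundary of a small debris sphere or of a ball-shaped region obtained after cutting along part of $\Si$, the arc is free-boundary isotopic to a standard embedded arc. Combined with the cable lemmas of Section~\ref{sec:cables}, this isotopy can be performed while carrying the whole subordinate bundle of tubes along with its core arc, placing each cable into a small ball where the shrinking deformation of the first stage opens it up—either collapsing it to a thin neck of the permitted form or pinching off a redundant sheet. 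Each iteration strictly decreases the number of graphical copies of $\Si_i$ present, so the process terminates after finitely many steps with $\G_1$ equal to a subcollection of the $\Si_i$'s joined by thin necks, and the $\delta$-budget is split evenly across the finitely many stages.
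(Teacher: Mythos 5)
Your overall architecture matches the paper's: localize to each $N_h(\Si_i)$, temporarily cap off boundary circles to produce closed surfaces, apply Proposition~\ref{main_deformation} there, restore the connecting tubes, and then untangle the resulting cables via the Generalized Light Bulb Theorem~\ref{general} and the cable machinery of Section~\ref{sec:cables}. That part is sound and is exactly what Lemma~\ref{cutting and reattaching hair}, Lemma~\ref{canonical multiple}, and the final paragraph of the proof of Proposition~\ref{prop: multiple components} accomplish.

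However, your first stage contains a genuine error and a genuine gap. The claim that ``the monotonicity formula forces $\G^{\mathrm{out}}$ to consist of a disjoint union of small-diameter pieces'' is false: the monotonicity formula applies to stationary varifolds, not to arbitrary embedded surfaces, and a surface of tiny area can nonetheless have large diameter (a long thin tube, which is exactly the phenomenon illustrated in Figures~\ref{fig:knotted} and~\ref{fig:dumbbell}). The paper avoids this by instead using Almgren's pull-tight flow (Lemma~\ref{contracting_hair}): one observes that no stationary varifold has total mass $<\varepsilon$ near $\Si$ yet mass $\geq \delta$ far from $\Si$, invokes compactness of the varifold space to get a uniform lower bound on the first variation, and integrates the resulting vector field to decrease $\mathrm{Area}(\G \setminus N_{2h}(\Si))$ below any $\delta$ without ever increasing area. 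Your alternative of covering $M\setminus N_h(\Si)$ by balls and applying ``the local shrinking deformation that is the workhorse of Proposition~\ref{main_deformation}'' is not directly available: the stacking procedure of Lemma~\ref{stacking} relies on the cell decomposition $\{\Delta_i\}$ built over $\Si$ and on the projection $P$, neither of which exists away from $N_h(\Si)$, and in particular it gives no way to preorganize a surface of small but a priori nontrivial topology into a fixed system of arcs with tubes. Finally, your termination argument (``strictly decreases the number of graphical copies'') is weaker than what the paper actually proves and needs: the untangling step removes sheets in pairs and so preserves the parity of the local multiplicity, which is precisely what identifies the limiting surface as the union of those $\Si_i$ where $\G$ had $\varepsilon$-odd multiplicity, joined by necks. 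Without the parity bookkeeping you cannot pin down which subcollection of the $\Si_i$ survives in $\G_1$.
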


We describe ideas involved in the proof of Proposition \ref{main_multiple}.
First, we can use a version of Almgren's pull-tight flow together with maximum principle
for stationary varifolds to make the area of $\G$ outside of $N_{h}(\Si)$
arbitrarily small.
We are grateful to Andre Neves from whom we learned 
of arguments of this type. 

For each connected component $\Si$ we can intersect $\G$
with the tubular neighborhood $N_h(\Si)$ and glue in small discs 
to the boundary components of $N_h(\Si) \cap \G$ so as to obtain a closed
(possibly disconnected) surface.
Then we can apply Proposition \ref{main_deformation} to deform this surface into
disjoint graphical copies of $\Si_i$. Of course, we are not allowed to actually 
do any surgeries on $\G$. Instead, we perform deformations of 
Proposition \ref{main_deformation} while simultaneously moving thin necks attached to the surface
to preserve continuity.

After the surface has been deformed into a canonical form in the neighborhood
of each connected component $\Si_i$, we need a global argument, showing that one can always find 
a neck that can be unknotted, using Generalized Light Bulb Theorem, and slide
into the neighborhood of one of the $\Si_i$'s. This process terminates
only when for each $i$ surface $\G$ either avoids the neighbourhood of $\Si_i$
or looks like a single copy of $\Si_i$ with thin necks attached.

Note that unlike in the setting considered by Marques-Neves (Appendix A of \cite{mn4}), it is very important that we keep track of the part of the pulled-tight surface outside of the tubular neighborhood, as the neck we may ultimately need to find may pass through the complement of the tubular neighborhood.  See Figure \ref{fig:dumbbell} for an illustration of a case where this is necessary.  

\section{Deformation in the neighborhood of a connected
stable minimal surface}\label{onecomponent}

\subsection{Stacked surface.} \label{stacked_global}
Let $\G \subset N_{h_0}(\Si)$.
Given $\delta>0$ we will say that $\G$ is a $(\delta,k)$-stacked
surface
if there exists a decomposition
$\G = D \sqcup Y \sqcup X$ 
with the following properties:

a) $D= \bigsqcup_{i=1}^m D_i$, where each 
$D_i \subset \overline{D}_i $,
and $\{\overline{D}_i\}$ is a collection of disjoint
graphs over $\Si$, $\overline{D}_i= \{\phi(x,f_i(x))|x \in \Si \}$, satisfying 
$Area(\overline{D}_i ) < Area(\Si)
+ \frac{\delta}{10k}$;

b) $Area(\overline{D}_i \setminus D_i) < \delta/10$;

c) each connected component of $Y$
is a the boundary of a small tubular neighbourhood of 
an embedded graph, and their total area is at most $\delta/10$;

d) $X$ is a disjoint union of closed surfaces
of total area less than $\delta/10$,
each contained in a ball of radius less than
$\sqrt{\delta}/10$.

We can order the punctured surfaces $D_i$ to have descending height relative to a fixed unit normal on $\Gamma$, with $D_1$ the top-most.  Let us call $D_i$ the \emph{$i$th sheet}.  Let us call $Y$ the thin part of 
$\G$.

\begin{prop} \label{canoncial form}
Let $\Si$ be a strictly stable two-sided connected minimal surface
and $\G \subset N_{h_0}(\Si)$ be a (not necessarily connected)
smooth surface.
For any $\delta>0$ there exits $k>0$ and
an isotopy $\{ \G_t\}$
with $\G_0 = \G$, $\G_1$ is $(\delta,k)$-stacked
and $Area(\G_t) \leq Area(\G) + \delta$ for all $t$.
\end{prop}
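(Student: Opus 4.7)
The plan is to combine three tools: the squeezing map $P_t$ from Section~\ref{sec:squeezing} (which exists precisely because $\Si$ is strictly stable), a Colding--De Lellis style local shrinking-isotopy replacement in small balls \cite{cd}, and a topological identification of the resulting pieces as graphical sheets, thin tubes and small bubbles.

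First I would apply $P_t$ with $t$ close to $1$ to push $\G$ into $N_h(\Si)$ for some $h$ much smaller than $\sqrt{\delta}$; by property (3) of the squeezing map this is area non-increasing, and in the resulting Fermi coordinates $\phi(x,s)$ the surface $\G$ lies in a very thin shell about $\Si$, so each component is nearly ``horizontal.'' Next, cover $\Si$ by a finite collection of geodesic balls $\{B_{r_\alpha}(p_\alpha)\}_{\alpha=1}^{N}$ of radius $r_\alpha\ll\sqrt{\delta}/10$, chosen generically so that $\G$ is transverse to every $\partial B_\alpha$. In each ball in turn I would run a local area-nonincreasing deformation: rescale so that $B_\alpha$ becomes a concentric ball of tiny radius (the Colding--De Lellis shrinking trick), replace $\G\cap B_\alpha$ by a discrete area-minimizing competitor among surfaces isotopic to it rel.\ boundary, then rescale back. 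The shrinking factor is chosen so that each local step costs at most $\delta/(10N)$ in area, and the minimizer has the effect of opening every fold or unknotted neck that fits inside a single ball.

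After processing all balls the trace of $\G$ in each $B_\alpha$ is a disjoint union of near-horizontal graphs over $\Si\cap B_\alpha$, small bubbles contained in balls of radius $\leq\sqrt{\delta}/10$, and thin tubes passing transversely through $\partial B_\alpha$. Gluing these local descriptions across the covering balls produces the decomposition $\G_1=D\sqcup Y\sqcup X$ required by the definition of a $(\delta,k)$-stacked surface: the graphical pieces assemble into sheets $\overline{D}_i=\{\phi(x,f_i(x))\}$; the tubes assemble into boundaries of regular neighbourhoods of embedded graphs (the thin part $Y$); and the small components form $X$. The number of sheets is a priori bounded by $k:=\lfloor \mathrm{Area}(\G)/\mathrm{Area}(\Si)\rfloor +1$, and a last light use of $P_t$ flattens each sheet to ensure $\mathrm{Area}(\overline{D}_i)<\mathrm{Area}(\Si)+\delta/(10k)$, giving (a); the bounds (b)--(d) follow from the accounting of the shrinking steps together with the transversality of the cover.

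The main obstacle will be controlling the cumulative area increase by $\delta$ throughout a \emph{single} continuous global isotopy. Each local shrinking deformation is a time-dependent diffeomorphism supported in one ball, and I would splice them in a fixed order, interleaving with brief applications of $P_t$ to recover the thinness of the shell after each local distortion; the parameters $h$, $r_\alpha$, $N$, and the shrinking factors must be chosen simultaneously so that the accumulated error is $\leq\delta/10$. A secondary subtlety is the existence of the discrete minimizer inside each rescaled ball, which is handled by a standard minimizing-sequence argument among isotopies with fixed boundary data (very much in the spirit of \cite{cd}), using generic transversality of $\G$ with each $\partial B_\alpha$; and one must verify that the minimizer's boundary configurations on $\partial B_\alpha$ glue consistently across neighbouring balls, which follows from choosing the cover so that the $\partial B_\alpha$ are already in general position.
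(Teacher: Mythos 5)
Your overall scheme — squeeze with $P_t$, deform locally in small balls using the Colding--De Lellis shrinking trick, then read off a decomposition into sheets, tubes and bubbles — is the same skeleton as the paper's proof via its Lemma~\ref{stacking}. But there is a genuine gap in the termination step, and it is the heart of the matter.

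You process the balls in a fixed order and assume that after one pass the surface is in stacked form everywhere. This is not guaranteed: the paper explicitly tracks the \emph{essential multiplicity} $k_{\mathrm{ess}}(\G,i,r')$ in each cell, and the proof of Lemma~\ref{stacking}~(d) shows that deforming $\G$ to stacked form in cell $\Delta_j$ can \emph{decrease} the essential multiplicity in the neighbouring cell $\Delta_{j-1}$, which then fails to be $(\delta,k)$-stacked and has to be reprocessed. You never introduce this invariant or address the interaction between overlapping/adjacent balls, so your single-pass argument simply does not close. The paper's termination argument is the crucial missing idea: whenever reprocessing a cell is forced, Lemma~\ref{essential} guarantees the area drops by at least a fixed amount $c/2>0$ (where $c=\min_i \mathrm{Area}(\Si\cap\Delta_i)$), and since area is bounded below the process must stop after finitely many iterations. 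Correspondingly, the per-step error budget cannot be a uniform $\delta/(10N)$ as you propose; the paper uses a summable sequence $\delta_i = \min\{c/2,\ 2^{-i}\delta/100\}$ precisely because the number of iterations is not known in advance, and the occasional large errors of size $\sim c/2$ are offset by the $c/2$ area drops that trigger them.

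Two secondary points: (1) the number of sheets $k$ is not free to be set a priori as $\lfloor \mathrm{Area}(\G)/\mathrm{Area}(\Si)\rfloor+1$; it emerges from the iteration as the common essential multiplicity across all cells, and ensuring it is the \emph{same} integer in every cell is exactly what the reprocessing achieves. (2) Your replacement of the explicit topological straightening in Lemma~\ref{stacking} (decomposing $S$ into discs, annuli, and pairs of pants along a curve system containing $S\cap\partial B_{r/2}(p_i)$ and $S\cap\partial B_{r'-\delta'}(p_i)$) by an abstract ``discrete area-minimizing competitor rel.\ boundary'' is plausible but unjustified: a minimizer isotopic to $\G\cap B_\alpha$ rel.\ boundary need not be a union of graphs plus thin tubes plus bubbles without further argument — one would at least need to argue as the paper does that essential intersection curves with the small sphere can be straightened to latitudinal circles, which requires the coarea choice of a good radius $r'$ and the control on $L(\G\cap\partial B_{r'}(p_i))$, none of which appear in your proposal.
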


\subsection{Choice of radius $r$ and open neighbourhood $\Omega_{h}$} \label{r}
Consider the projected current $P(\Gamma)$ supported on $\Sigma$.  There exists $r_1>0$ so that 
for any $r\leq r_1$ the mass of $P(\Gamma)$ in any ball $B_{r}(x)$ (with $x\in\Sigma$) is less than $\delta/200$.  
By continuity, we can choose $t_1$ close enough to $1$ so that 
for every $t \in [t_1,1)$ the mass of $P_t(\Gamma) \subset \Omega_{1 - t_1}$ in any ball $B_{r}(x)$ is at most $\delta/100$ for any $r<r_1$ and $x\in\Sigma$.   We replace $\Gamma$ with $P_{t_1}(\Gamma)$ (but do not relabel it).

Let $h \in (0, 1 - t_1)$, so that $\Gamma  \subset \Omega_h$.
Now we pick $r= r(\Si, \G, \delta)>0$, satisfying the following properties:


1) $r$ is smaller than the minimum of the convexity 
radii of $M$ and $\Si$;

2) $r < r_1$, that is, for every $x \in \Si$ and a ball $B_r(x)$ of radius $r$
we have that $Area(\G \cap B_r(x)) < \frac{1}{100} \delta$;

3) for every $x \in \Si$ and a ball $B_r(x)$ of radius $r$
we have that the exponential map $exp: B^{Eucl}_r(0) \rightarrow B_r(x)$
satisfies $0.99 < |d exp_y| < 1.01$ for all $y \in B_r(x)$.

\subsection{Choice of triangulation and constant $c$.} \label{triangulation}
Fix a triangulation 
of $\Si$,
so that for each 2-simplex $S_i$, $1 \leq i \leq m$, in the triangulation
there exists a point $p_i \in S_i$ with 
$S_i \subset B_{r/3}(p_i)$.
Assume that the number is chosen so that
$S_{i+1}$ and $S_i$ share an edge.
We cover $\Omega_{h}(\Si)$ by a collection of
cells $\{\Delta_i = \phi (S_i \times [-(1-t_1),(1-t_1)]) \}$.
The interiors of $\Delta_i$'s are disjoint and
each $\Delta_i$ is contained in a ball  $B_{r/2}(p_i)$.
Let $C_i =  \phi ( \partial S_i \times [-h,h]) \subset \partial \Delta_i$.
Let $c = \min\{ Area(\Si \cap \Delta_i)\}$.

By applying squeezing map $P_t$ we may assume that
$\Omega_h \subset N_{r/10}(\Sigma)$ and $\G$ is contained in the union of $\Delta_i$.

We will first need to prove a local version of
Lemma \ref{canoncial form}. Namely,
we will show that $\G$ can be deformed
into certain canonical form in each cell $\Delta_i$.

We introduce several definitions.

\subsection{Essential multiplicity.}
Suppose $r' \in (r/2, r)$, $\G \subset \Omega_h(\Si)$ and
assume $\G$ intersects $\partial B_{r/2}(p_i)$
and $\partial B_{r}(p_i)$ transversally.
Let $\mathcal{S}(\G,p_i, r')$ denote the set of surfaces 
$S \subset \Omega_{h}(\Si)$, 
such that $S$ intersects $\partial B_{r/2}(p_i)$ transversally
and 
there exists an isotopy from $S$ to $\G$
through surfaces $S'$
such that $S' \setminus int(B_{r'}(p_i)) = \G \setminus int(B_{r'}(p_i))$.
Let $k(S)$ denote the number of connected components of
$S \cap \partial B_{r/2}(p_i)$, which are not contractible
in $\partial B_{r/2}(p_i) \cap \Omega_{h}(\Si)$.
We define the essential multiplicity of $\G$ in
$B_{r/2}(p_i)$ to be 
$k_{ess}(\G,i,r') = \inf \{k(S)| S \in \mathcal{S}(\G,p_i,r') \}$.

We have the following lemma.

\begin{lemma} \label{essential}
$Area(\G \cap B_{r'}(p_i)) \geq k_{ess}(\G,p_i,r') Area(\Si \cap B_{r'}(p_i)) - O(h)$.
\end{lemma}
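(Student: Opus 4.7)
The plan is to prove the estimate by combining the coarea formula for the distance function from $p_i$ with a topological lower bound on the length of each cross-section of $\G$ by the level spheres $\partial B_\rho(p_i)$. Roughly: area becomes an integral of cross-sectional lengths; each length is bounded below by the essential multiplicity times the systole of the cross-sectional cylinder; the systole integrates to $Area(\Si \cap B_{r'})$.

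\textbf{Step 1 (Coarea).} Let $d := \dist(\cdot, p_i)$. Since $|\nabla^\G d| \leq |\nabla d| = 1$ wherever $d|_\G$ is smooth, the coarea formula gives
$$Area(\G \cap B_{r'}(p_i)) \geq \int_0^{r'} \ell(\rho)\, d\rho, \qquad \ell(\rho) := \mathcal{H}^1\bigl(\G \cap \partial B_\rho(p_i)\bigr),$$
where the bound is attained for a.e.\ regular value $\rho$.

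\textbf{Step 2 (Systole bound on cross-sections).} For a.e.\ $\rho \in (h, r')$ the region $\partial B_\rho(p_i) \cap \Omega_h$ is diffeomorphic to a cylinder $S^1 \times (-h,h)$, since $r$ is below the convexity radii of $M$ and $\Si$. Because the foliation leaves $\phi(\Si \times \{s\})$ are $C^1$-close to $\Si$ in $\Omega_h$, the systole of this cylinder is at least $\mathcal{H}^1(\partial B_\rho(p_i) \cap \Si) - O(h)$. Each essential component of $\G \cap \partial B_\rho(p_i)$ in this cylinder has length at least the systole, so
$$\ell(\rho) \geq k(\G, \rho)\cdot \bigl(\mathcal{H}^1(\partial B_\rho(p_i) \cap \Si) - O(h)\bigr),$$
where $k(\G, \rho)$ denotes the number of essential components.

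\textbf{Step 3 (Topological lower bound $k(\G, \rho) \geq k_{ess}$).} The key step is to show $k(\G, \rho) \geq k_{ess}(\G, p_i, r')$ for a.e.\ $\rho \in (h, r')$. Given any regular $\rho_0 \in (h, r')$, I will produce a surface $S \in \mathcal{S}(\G, p_i, r')$ with $k(S) \leq k(\G, \rho_0)$, which by the definition of $k_{ess}$ as an infimum suffices. The construction is an ambient isotopy supported in the shell $\Omega_h \cap (B_{\max(\rho_0, r/2)} \setminus B_{\min(\rho_0, r/2)}) \subset B_{r'}(p_i)$; this shell is diffeomorphic to an annulus times $(-h,h)$. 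The isotopy ``slides'' each essential component of $\G \cap \partial B_{\rho_0}$ radially onto a corresponding essential component on $\partial B_{r/2}$, while trivial components of $\G \cap \partial B_{\rho_0}$ are carried into the interior of the shell (capped off by disks within $\G \cap \text{shell}$ when possible) so as not to produce new essential intersections with $\partial B_{r/2}$.

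\textbf{Step 4 (Conclusion).} Combining Steps 1--3,
$$Area(\G \cap B_{r'}(p_i)) \geq k_{ess}(\G, p_i, r') \int_0^{r'} \mathcal{H}^1(\partial B_\rho(p_i) \cap \Si)\, d\rho - O(h).$$
Applying coarea on $\Si$ itself to $d|_\Si$ (and using that $|\nabla^\Si d| = 1$ at $p_i \in \Si$ with $O(r^2)$ correction nearby, since $\Si$ is minimal and $r$ was taken small), the integral on the right equals $Area(\Si \cap B_{r'}(p_i))$ up to error absorbable into $O(h)$. This yields the stated inequality.

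\textbf{Main obstacle.} Step 3 is the crux. The delicate point is that the ambient isotopy must preserve the topology of $\G$ while controlling the essential count at $\partial B_{r/2}$; non-essential components of $\G \cap \partial B_{\rho_0}$ could be linked or nested in complicated ways in the shell, and one needs to verify that they can be rearranged without creating new essential circles on $\partial B_{r/2}$ and without pushing $\G$ out of $\Omega_h$. A case split on whether $\rho_0 < r/2$ or $\rho_0 > r/2$ is likely convenient, but in either case the shell has a product-like structure where standard 3-manifold topology (innermost disk/compressing arguments) applies.
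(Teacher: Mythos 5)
Your overall strategy coincides with the paper's: coarea on the distance function, a systole-type lower bound for each essential circle on the cylinder $\partial B_\rho \cap \Omega_h$, and an isotopy argument showing the number of essential circles is at least $k_{ess}$ for a.e.\ $\rho$. Steps 1, 2 and 4 are fine and match the paper (the paper is terser, implicitly absorbing the coarea-on-$\Si$ error into the $O(h)$).

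Your Step 3, however, is more complicated than it needs to be, and the ``main obstacle'' you flag is actually a non-issue. The paper's argument is just a radial dilation/contraction: choose a smooth increasing $f:[0,r']\to[0,r']$ with $f(0)=0$, $f(\rho_0)=r/2$, $f(r')=r'$, and take the ambient isotopy of $B_{r'}(p_i)$ that sends $\partial B_\rho$ to $\partial B_{f(\rho)}$ and is the identity on $\partial B_{r'}$. This carries $\G$ to a surface $S\in\mathcal{S}(\G,p_i,r')$ whose intersection with $\partial B_{r/2}$ is, up to ambient diffeomorphism, exactly $\G\cap\partial B_{\rho_0}$. Hence $k(S)$ equals the number of essential circles in $\G\cap\partial B_{\rho_0}$, which must therefore be $\geq k_{ess}$. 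There is no per-component ``sliding,'' no capping of trivial circles, and no need to untangle nesting or linking: the radial isotopy transports the whole pattern rigidly, and only the count at $\partial B_{r/2}$ of circles that are essential in the cylinder matters. Your version would work once made precise, but it re-derives by hand what the uniform radial reparametrization gives for free.
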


\begin{proof}
By coarea inequality $Area(\G \cap B_{r'}(p_i)) \geq \int_{\rho=0}^{r'} L(\partial B_\rho(p_i) \cap \G) d \rho$.
For almost every $\rho$ we have that the number of 
connected components of
$\G \cap \partial B_{\rho}(p_i)$, which are not contractible
in $\partial B_{\rho}(p_i) \cap \Omega_{h}(\Si)$ is at least $k_{ess}(\G,i,r')$.
Indeed, otherwise we could radially isotop $\G$ to obtain a surface 
with fewer non-contractible components of $\G \cap \partial B_{r/2}(p_i)$,
contradicting the definition of $k_{ess}(\G,i,r')$.

We have $L(\partial B_\rho(p_i) \cap \G) \geq L(\partial B_\rho(p_i) \cap \Si) - O(h)$ and the lemma follows.
\end{proof}

Note that it may happen that the
relative map 
$$(\G \cap B_{r'}(p_i), \G \cap \partial B_{r'}(p_i)) \rightarrow (B_{r'}(p_i),\partial B_{r'}(p_i))$$
is null-homotopic (but not null-isotopic) and yet $k_{ess}(\G,i) \neq 0$
(see Fig. \ref{fig:essential}).

\begin{figure} 
   \centering	
	\includegraphics[scale=0.75]{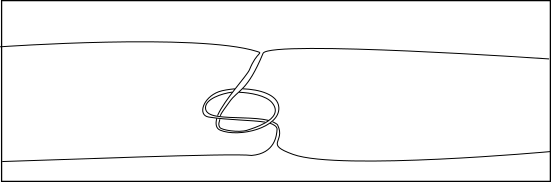}
	\caption{Two graphical sheets joined by a knotted neck.
	There is a homotopy, but there is no isotopy
	pushing the surface into the boundary of the cell, so $k_{ess} = 2$.}
	\label{fig:essential}
\end{figure}

\subsection{Surfaces stacked in a cell.}
An embedded surface $S$ is
$(\delta,k)$-stacked in a cell $\Delta_i$ if there exists
a decomposition 
\begin{equation} \label{division}
S \cap \Delta_i = D \sqcup Y \sqcup X
\end{equation}
with the following properties:

a) $D= \bigsqcup_{i=1}^m D_i$, where each 
$D_i \subset \overline{D}_i $,
and $\{\overline{D}_i\}$ is a collection of disjoint
graphs over $S_i$, $\overline{D}_i= \{\phi(x,f_i(x))|x \in S_i \}$, satisfying 
$Area(\overline{D}_i ) < Area(S_i)
+ \frac{\delta}{10k}$;

b) $Area(\overline{D}_i \setminus D_i) < \delta/10$;

c) each connected component of $Y$
is a boundary of a small tubular neighbourhood of 
an embedded graph, and their total area is at most $\delta/10$;

d) $X$ is a disjoint union of closed surfaces
of total area less than $\delta/10$,
each contained in a ball of radius less than
$\sqrt{\delta}/10$.

We can order the punctured surfaces $D_i$ to have descending height relative to a fixed unit normal on $\Gamma$, with $D_1$ the top-most.  Let us call $D_i$ the \emph{$i$th sheet}.  Let us call $Y$ the thin
part of $S \cap \Delta_i$.




\subsection{Key lemmas used in the proof of Proposition \ref{canoncial form}}.

%

The following is the blow down - blow up lemma
from \cite{cd}.

\begin{lemma} \label{radial}
Suppose $B_{r'}(x)$ is a ball of radius
$r' \leq r$ and ${\G_t}$ 
be an isotopy with $\G_t \setminus B_{r'}(x)=
\G_0 \setminus B_{r'}(x)$.
Then there exists an isotopy
${\overline{\G}_t}$, such that:

(a) $\overline{\G}_0=\G_0$ and $\overline{\G}_1=\G_1$;

(b) $\overline{\G}_t \setminus B_{r'}(x) =\G_0 \setminus B_{r'}(x)$;

(c) $Area(\overline{\G}_t) \leq \max\{Area(\G_0), Area(\G_1) \}+2 r' L(\G_0 \cap 
\partial B_{r'}(x))$ for $t \in [0,1]$.

\end{lemma}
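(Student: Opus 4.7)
The plan is to build $\{\overline{\Gamma}_t\}$ via a \emph{shrink--transport--expand} scheme supported in $B_{r'}(x)$. By condition~(3) in Section~\ref{r} the exponential chart identifies $B_{r'}(x)$ with a $1.01$-quasi-isometric Euclidean ball, so one can carry out the construction in the Euclidean model and transfer it back, absorbing the small metric distortion into the slack of the stated bound. Let $\gamma := \Gamma_0\cap\partial B_{r'}(x)=\Gamma_1\cap\partial B_{r'}(x)$, $L := L(\gamma)$, and fix a small parameter $\epsilon>0$ to be specified later.

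\emph{Phase 1 (shrink)}, $t\in[0,1/3]$: deform $\Gamma_0\cap B_{r'}(x)$ through a family $\Phi_s$ of radial diffeomorphisms of $B_{r'}(x)$ that pointwise fix $\partial B_{r'}(x)$, scale the inner ball $B_{r'/2}(x)$ by a factor $\lambda(s)$ going from $1$ down to $\epsilon/r'$, and interpolate linearly along radial lines in the outer shell. The time-$1/3$ slice consists of $\Gamma_0$ outside $B_{r'}(x)$, a shrunken copy of $\Gamma_0\cap B_{r'/2}(x)$ sitting inside $B_\epsilon(x)$, and a ruled transition annulus in the outer shell connecting them. \emph{Phase 2 (transport at small scale)}, $t\in[1/3,2/3]$: holding the outer transition annulus fixed, apply the rescaled original isotopy inside $B_\epsilon(x)$, carrying the shrunken $\Gamma_0\cap B_{r'/2}(x)$ to the shrunken $\Gamma_1\cap B_{r'/2}(x)$. \emph{Phase 3 (expand)}, $t\in[2/3,1]$: run Phase 1 in reverse with $\Gamma_1$ in place of $\Gamma_0$, ending exactly at $\Gamma_1$. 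By construction, $\overline{\Gamma}_t$ agrees with $\Gamma_0$ outside $B_{r'}(x)$ for all $t$, and $\overline{\Gamma}_0=\Gamma_0$, $\overline{\Gamma}_1=\Gamma_1$.

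The heart of the argument is the area accounting. Outside $B_{r'}(x)$ nothing changes. The scaled image of $\Gamma_i\cap B_{r'/2}(x)$ ($i=0,1$) has area at most $\lambda^2\,Area(\Gamma_i\cap B_{r'/2}(x))\leq Area(\Gamma_i)$ since $\lambda\leq 1$. The ruled transition annulus in the outer shell has radial fibers of length at most $r'$ lying over the boundary curve $\gamma$, so the Euclidean cone inequality bounds its area by $r'L$, with the $1.01$-distortion of the exponential chart keeping this at $(1+o(1))r'L$. During Phase 2 the contribution inside $B_\epsilon(x)$ is bounded by $(\epsilon/r')^2\sup_t Area(\Gamma_t)$, forced below any prescribed threshold by choosing $\epsilon$ small. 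Summing these gives $Area(\overline{\Gamma}_t)\leq \max\{Area(\Gamma_0),Area(\Gamma_1)\}+r'L+o(1)\leq \max\{Area(\Gamma_0),Area(\Gamma_1)\}+2r'L(\Gamma_0\cap\partial B_{r'}(x))$, as required.

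The main technical obstacle is verifying that the radial map $\Phi_s$ in the outer shell does not inflate surface elements beyond what the cone bound allows: with radial stretch bounded by a universal constant and tangential contraction bounded by $1$, its Jacobian on $2$-surfaces is uniformly controlled, and the resulting ruled piece obeys the Euclidean cone bound at each radius. All metric-distortion and Phase 2 losses sit comfortably inside the factor of $2$ in front of $r'L$ in the final estimate.
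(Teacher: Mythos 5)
Your high-level plan (shrink, transport at small scale, expand) is exactly the blow-down/blow-up construction that the paper cites from Colding--De Lellis, so the scheme is the right one. However, the implementation of the shrink phase contains a genuine gap, and the stated area bound does not follow from your construction.

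The problem is in Phase 1. You define $\Phi_s$ to be a radial diffeomorphism of $B_{r'}(x)$ that is the identity on $\partial B_{r'}(x)$, and then push $\Gamma_0\cap B_{r'}(x)$ forward under $\Phi_s$. The part of $\overline{\Gamma}_{1/3}$ lying in the outer shell $B_{r'}\setminus B_{r'/2}$ is therefore $\Phi_{1/3}\bigl(\Gamma_0\cap(B_{r'}\setminus B_{r'/2})\bigr)$ --- the image of whatever $\Gamma_0$ happened to look like there --- which is generically \emph{not} a ruled annulus over $\gamma$. Its area is controlled by $\sup|\mathrm{Jac}\,\Phi_s|\cdot Area\bigl(\Gamma_0\cap(B_{r'}\setminus B_{r'/2})\bigr)$, not by the cone area. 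Since $\Phi_s$ fixes $\partial B_{r'}$ while sending $\partial B_{r'/2}$ to $\partial B_{\lambda r'/2}$, the radial derivative $f_s'(\rho)$ in the shell has average close to $2$ as $\lambda\to 0$; the two-plane Jacobian $f_s'(\rho)\,f_s(\rho)/\rho$ is therefore $>1$ on part of the shell no matter how you interpolate (integrating $\tfrac{d}{d\rho}f_s^2\le 2C\rho$ from $r'/2$ to $r'$ forces $C\ge 4/3$ as $\lambda\to 0$). So the squeezed shell piece can have area a definite factor larger than $Area(\Gamma_0\cap(B_{r'}\setminus B_{r'/2}))$, a quantity with no a priori relation to $r'L(\gamma)$: if $\Gamma_0$ is heavily folded inside the shell, this term dwarfs $2r'L(\gamma)$ and the bound (c) fails. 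Your ``main technical obstacle'' paragraph acknowledges the radial stretch but then asserts that ``the resulting ruled piece obeys the Euclidean cone bound''; that is a non sequitur, because the image of a generic surface under a bounded-Jacobian radial map is not a ruled piece.

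The fix --- and what the radial-deformation construction of Colding--De Lellis actually does --- is to not push the shell piece forward at all. Instead, apply a genuine homothety $y\mapsto\lambda y$ to the whole of $\Gamma_0\cap B_{r'}(x)$ (so the boundary circle is carried inward to $\partial B_{\lambda r'}$), and then explicitly \emph{insert} the cone over $\gamma$ in the annular gap $B_{r'}\setminus B_{\lambda r'}$ as new material, smoothing the corner at $\partial B_{\lambda r'}$ by an arbitrarily small modification. Then the annular piece is literally a cone with area at most $\tfrac{1}{2}r'L(\gamma)$, the homothety strictly decreases the area of the inner piece, and the total satisfies $Area(\overline{\Gamma}_\tau)\le Area(\Gamma_0)+\tfrac{1}{2}r'L(\gamma)+o(1)$ at every time of Phase 1. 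Phases 2 and 3 then go through as you describe. Replacing the diffeomorphism-pushforward in Phase 1 by this ``homothety plus explicit cone'' construction repairs the gap.
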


\begin{proof}
For the proof see radial deformation construction 
in \cite{cd}, Step 2 in the proof of Lemma 7.6.
\end{proof}

\begin{lemma} \label{stacking}
Let $\G$ and $\Delta_i$ be as defined above.
There exists an admissible family $\{\G_t\}$
and $k \leq k_{ess}(\G,i,3r/4)$,
such that:

a) $\G_0 = \G$ and $\G_t \setminus int(B_{r}(p_i)) = \G \setminus int(B_{r}(p_i))$
for all $t$;

b) $\G_1$ is $(\delta,k)$-stacked in $\Delta_i$ and
$Area(\G_1)< k Area(\Si \cap \Delta_i) + \delta/2$;

c) $Area(\G_t)< Area(\G) + \delta$ for $t \in [0,1]$;

d) if $\G$ is $(\delta',k')-$stacked in a cell $\Delta_{j}$,
$j = i-1$ or $i+1$, 
then either $k=k'$ and $\G_1$ is 
 $(\delta',k)-$stacked in $\Delta_j$
 or $k_{ess}(\G_1,j, 3r/4)< k'$.
\end{lemma}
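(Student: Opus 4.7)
The plan is to produce the isotopy inside $B_r(p_i)$ in two stages. First, using the almost-Euclidean geometry in the ball (item (3) of Section \ref{r}) and the small-mass hypothesis $Area(\G\cap B_r(p_i))<\delta/100$, I would construct a target surface $\G_1$ that is $(\delta,k)$-stacked in $\Delta_i$ with $k\leq k_{ess}(\G,i,3r/4)$ and whose area inside $B_r(p_i)$ is at most $k\cdot Area(\Si\cap\Delta_i)+\delta/2$. Second, I would connect $\G$ to $\G_1$ by a topological isotopy supported in some $B_{r'}(p_i)$ with $r'\in(3r/4,r)$, and convert it via the blow-down/blow-up device (Lemma \ref{radial}) into one satisfying $Area(\G_t)\leq Area(\G)+\delta$.

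\textbf{Building $\G_1$.} Pick $S\in\mathcal{S}(\G,p_i,3r/4)$ with $k(S)=k:=k_{ess}(\G,i,3r/4)$, and after a further small isotopy in $B_{3r/4}(p_i)$ assume every contractible component of $S\cap\partial B_{r/2}(p_i)$ bounds a disk in $\partial B_{r/2}(p_i)\cap\Omega_h$. Inside the almost-flat ball, isotope $S$ so that the $k$ non-contractible components get capped off to $k$ disjoint graphs $\overline{D}_j$ over $S_i$, each of area $<Area(\Si\cap\Delta_i)+\delta/(20k)$ (using the foliation by graphs of the first eigenfunction from Section \ref{sec:squeezing}). The remaining contractible pieces are deformed either to boundaries of thin tubular neighborhoods of embedded graphs (the thin part $Y$) or to disjoint small bubbles (the scattered part $X$); the small-mass bound forces the total areas of $Y$ and $X$ to stay below $\delta/10$ each and each bubble to lie inside a ball of radius $<\sqrt{\delta}/10$. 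This produces a $(\delta,k)$-stacked surface $\G_1$ with the claimed area bound.

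\textbf{Area control and adjacent cells.} Choosing $r'\in(3r/4,r)$ with $L(\G\cap\partial B_{r'}(p_i))$ arbitrarily small by coarea, applying Lemma \ref{radial} to the isotopy built above yields properties (a)--(c). For (d), since the isotopy is supported in $B_r(p_i)$, the portion of $\G\cap\Delta_j$ lying outside $B_r(p_i)$ is preserved; the isotopy either sends each sheet-trace on the common wall $\Delta_i\cap\Delta_j$ to a sheet-trace (so a $(\delta',k')$-stacking of $\G$ in $\Delta_j$ becomes a $(\delta',k)$-stacking of $\G_1$ with $k=k'$), or it merges two of them, in which case a previously essential loop in $\partial B_{r/2}(p_j)\cap\G$ becomes contractible and $k_{ess}(\G_1,j,3r/4)<k'$. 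The main obstacle is the straightening step in the construction of $\G_1$: turning an arbitrary surface in a small ball with bounded essential multiplicity into the canonical form of graphs plus thin tubes plus bubbles at near-optimal area requires a Colding--De Lellis style induction on the complexity of contractible components, and it is this step that actually consumes the small-mass budget from Section \ref{r}.
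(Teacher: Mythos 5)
Your proposal follows essentially the same strategy as the paper's proof: use coarea to choose a radius $r'\in[3r/4,r]$ where $L(\G\cap\partial B_{r'}(p_i))$ is small, isotope inside $B_{r'}(p_i)$ to a surface realizing the essential multiplicity and decompose it into graphs, thin tubes and small bubbles, and finally invoke the blow-down/blow-up trick (Lemma \ref{radial}) to bound the area of the intermediate surfaces. The choice $k=k_{ess}(\G,i,3r/4)$ rather than the paper's $k=k_{ess}(\G,p_i,r')$ is harmless since the lemma only requires $k\le k_{ess}(\G,i,3r/4)$.

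Where you are too quick is precisely the point you flag at the end. First, Lemma \ref{radial} only gives $Area(\overline{\G}_t)\le\max\{Area(\G_0),Area(\G_1)\}+2r'L(\G_0\cap\partial B_{r'})$, so you must still show that $Area(\G_1\cap B_{r'}(p_i))$ is bounded by $k\cdot Area(\Si\cap\Delta_i)+\delta/2$; but $\G_1$ must agree with $\G$ on $\partial B_{r'}(p_i)$, whose trace can be wildly wiggly. The paper handles this by introducing an extra thin shell $B_{r'}(p_i)\setminus B_{r'-\delta'}(p_i)$, straightening the essential boundary circles at $\partial B_{r'-\delta'}(p_i)$ to latitudinal circles via a topological cylinder argument, and bounding the area of the straightened transition collar by $Area(\partial B_{r'-\delta'}(p_i)\cap\Omega_h)=O(h)$; this is why $h$ must be taken small beforehand, not merely the small-mass bound in Section \ref{r}. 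Second, for property (d) your account in terms of ``sheet-traces merging'' is not what the paper does and does not obviously hold: the isotopy in $B_{r'}(p_i)$ can rearrange sheets in $\Delta_j\cap B_{r'}(p_i)$ in complicated ways. The paper instead observes that if $k'>k$ then after the deformation there is an open set $U\subset S_j$ over which the number of preimages under the projection $P$ drops below $k'$, and then invokes Lemma \ref{essential} to conclude $k_{ess}(\G_1,j,3r/4)<k'$. Finally, the decomposition inside $B_{r/2}(p_i)$ is not ``capping off'' — the paper chooses a curve system $\{\gamma_j\}$ containing the boundary circles and cutting $S$ into discs, annuli and pairs of pants, and deforms each piece according to how many essential boundary components it has; without this classification (and the resulting $\pi_1$ bookkeeping ruling out a single essential boundary for annuli, and one essential boundary for pairs of pants) you cannot rule out configurations that would spoil both the area bound and the ability to stack.
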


\begin{proof}

After applying the squeezing map $P$
we may assume that $\G$ is contained in $\Omega_h(\Si)$,
where $h$ sufficiently small, so that:

 - $Area( \phi(\Si,h') \cap B_r(p_i)) \leq Area(B_{r}(p_i) \cap \Si) + \frac{\delta}{20k}$ for all $h' \in [-h,h]$;
 
 - $Area(\partial B_{r'}(p) \cap \Omega_h(\Si)) < \delta r' /100$ for
 all $r' \in (0,r)$.

By coarea inequality and the definition
of $r$ (\ref{r}) there exists
a radius $r' \in [3r/4, r]$
with 

\begin{equation} \label{eqn:coarea}
 L( \G \cap \partial B_{r'}(p_i)) \leq \frac{\delta}{10 r}
\end{equation}

Let $k = k_{ess}(\G, p_i, r')$.
It follows from the definition that $k \leq k_{ess}(\G, p_i, 3r/4)$.

We will show that there exists an 
isotopy $\{ \G_t\}$ of $\G$ that does not change $\G$
outside of the interior of $B_{r'}(p_i)$ and deforms
it to a surface $\G_1$ with the following properties:

(a) $\G_1$ is $(\delta,k)$-stacked in $\Delta_i$

(b) $Area(\G_1 \cap B_{r'}(p_i)) \leq Area(\G_0 \cap B_{r'}(p_i)) + \delta/10$.

Then by Lemma \ref{radial} and (\ref{eqn:coarea}) we may assume that 
the isotopy $\{ \G_t \}$ also satisfies
$Area(\G_t) \leq Area(\G) + \delta$ for $t\in [0,1]$.
In other words, in the construction below
we do not need to control the areas of the intermediate 
surfaces.

We start by deforming all connected components of $\G$
which are closed surfaces in the interior of $B_{r'}(p_i)$,
so that they lie in a small ball and have total area
less than $\delta/100$. From now on, without any loss of
generality, we may assume that every connected component of 
$\G \cap B_{r'}(p_i) $ intersects $\partial B_{r'}(p_i)$.

By definition of $k_{ess}(\G,i, r')$ we can deform
$\G$ into a surface $S$, such that  exaclty
$k$ connected components of 
$S \cap \partial B_{r/2}(p_i)$ are not contractible
in $\partial B_{r/2}(p_i) \cap \Omega_{h}(\Si)$.

Choose a collection of embedded mutually disjoint closed 
curves $\{\gamma_j \}$, $\gamma_j \subset S$,
such that  connected components of $S \setminus \cup \gamma_j$
are discs, annuli or pairs of pants.
Moreover, collection of curves $\{\gamma_j \}$ can be chosen so 
that it includes all connected components of
$S \cap \partial B_{r/2}(p_i)$.
We will say that a curve $\gamma \subset S \cap \partial B_{r/2}(p_i)$
(resp.  $\gamma \subset S \cap \partial B_{r'}(p_i)$) is essential
if it is non-contractible in $\partial B_{r/2}(p_i) \cap \Omega_{h}(\Si)$
(resp. $\partial B_{r'}(p_i) \cap \Omega_{h}(\Si)$).

We may assume that $S$ has been deformed in such a way that
 
1) every essential $\gamma \subset S \cap \partial B_{r/2}(p_i)$
 is a latitudinal circle, that is $\gamma = \partial B_{r/2}(p_i) 
 \cap \phi (\Si \times t)$ for some $t \in (-h,h)$;
 
2) every non-essential $\gamma \subset S \cap \partial B_{r/2}(p_i)$
is of the form $\gamma = \partial B_{\rho(\gamma)}(x(\gamma)) \cap \partial B_{r/2}(p_i)$
with the total sum of the areas of all 
 $B_{\rho(\gamma)}(x(\gamma)) \cap \partial B_{r/2}(p_i)$ less than
 $\delta/100$.
 
Let $S'$ be a connected component of $S \setminus \cup \gamma_j$
that lies in $B_{r/2}(p_i)$.
If $S'$ is a disc with a non-essential boundary 
in $ \partial B_{r/2}(p_i)$ we can isotop
it to a small cap near its boundary
and push it out of $B_{r/2}(p_i)$.
Similarly, if $S'$ is an annulus
or a pair of pants with non-essential boundary components
we can isotop it to a surface given by the boundary of a tubular neighbourhood
of a curve or a Y graph with the area at most 
$2 \sum_{l} Area(B_{\rho(\gamma_{j_l})}(x(\gamma_{j_l})) \cap \partial B_{r/2}(p_i))$,
where $\gamma_{j_l}$ are boundary components of $S'$.

If $S'$ is a disc with an essential
boundary curve we isotop it
to $B_{r/2}(p_i) 
 \cap \phi (\Si \times t)$.
Similarly, we isotop an annulus or a pair of pants
with $m=1,2$ or $3$ essential boundary components
to a surface given by $m$ stacked discs with holes
connected by narrow tubes or boundaries
of a tubular neighborhood of a graph.

Ambient isotopy theorem guarantees
that these deformations can be done
so that different connected components 
do not intersect each other.
As a result we obtain that the new surface $\G_1$
is $(k, \delta)$-stacked
in $\Delta_i \subset B_{r/2}(p_i)$.

We would like to deform connected components of
$S \setminus \cup \gamma_j$ that lie in 
$B_{r'}(p_i) \setminus B_{r/2}(p_i)$
in a way that will guarantee the 
upper bound on the area and property 
d) in the statement of the Lemma.

The main issue is that our deformation
is not allowed to change the boundary
$S \cap \partial B_{r'}(p_i)$, which can be very wiggly.
However, for some sufficiently small positive $\delta'< r' -r/2$ 
we can deform $S$ so that
$S \cap \partial B_{r'-\delta'}(p_i)$
satisfies the same properties 1)-2) as
$S \cap \partial B_{r/2}(p_i)$,
while controlling the area of
$S \cap (B_{r'}(p_i) \setminus B_{r'-\delta'}(p_i))$
in terms of $h$.
We choose $\delta'$ sufficiently small, so
that the distance function to $p_i$ restricted
to $S \cap (B_{r'}(p_i) \setminus B_{r'-\delta'}(p_i))$
is non-degenerate.

First, we deform the collars of non-essential curves
$\gamma \subset S \cap \partial B_{r'}(p_i)$,
so that their intersection with $\partial B_{r'-\delta'}(p_i)$
satisfies condition analogous to 2) above.
This can be done in a way so that the area of the deformed part of 
$S \cap (B_{r'}(p_i) \setminus B_{r'-\delta'}(p_i))$
is bounded by the area of the disc $\gamma$
bounds in $\partial B_{r'}(p_i)$.

Now we would like to straighten the essential curves.
Let $\gamma$ denote the highest (with respect
to signed distance from $\Si$) essential curve
in $ S \cap \partial B_{r'}(p_i)$. Choose $t \leq h$,
so that the latitudinal curve $\phi (\Si \times t) \cap \partial B_{r'-\delta'}(p_i)$
lies above $\gamma$. We isotop the small non-essential necks
in the neighbourhood of $\partial B_{r'-\delta'}(p_i)$ so that their
intersection with $\partial B_{r'-\delta'}(p_i)$
lies either above $ \phi(\Si \times t)$ or below
$\gamma$. After this deformation the subset of 
$ \partial B_{r'-\delta'}(p_i) \setminus S$ that lies
between $\phi(\Si \times t) \cap \partial B_{r'-\delta'}(p_i)$ 
and $\gamma$ is homeomorphic to
a cylinder. This implies that there exists an isotopy of $S$
sliding the essential intersection $\gamma$ to 
$\phi(\Si \times t) \cap \partial B_{r'-\delta'}(p_i)$. 

We iterate this procedure for every essential curve
in $S \cap \partial B_{r'-\delta'}(p_i)$
deforming them into latitudinal curves.
The isotopies done in this way have the property that
the area of the deformed part of 
$S \cap (B_{r'}(p_i) \setminus B_{r'-\delta'}(p_i))$
is bounded by the area of $\partial B_{r'-\delta'}(p_i) \cap \Omega_{h}(\Si)$.

We conclude that total area of
$S \cap (B_{r'}(p_i) \setminus B_{r'-\delta'}(p_i))$
after the deformation
goes to $0$ as $h \rightarrow 0$.

Suppose the
collection of curves $\{\gamma_j \}$ is chosen so 
that it includes all connected components of
$S \cap \partial B_{r' - \delta'}(p_i)$.
Suppose $S'$ is a connected component of
$S \setminus \cup \gamma_j$ that lies in 
$( B_{r'-\delta'}(p_i) \setminus B_{r/2}(p_i))$.

If all boundary components of $S'$
are non-essential, we can deform it 
so that it is a boundary of a tubular neighbourhood
of a curve or a Y graph.

If $S'$ is an annulus and one of its boundary
components is essential then the second boundary component
must also be essential
(this follows by examining the homomorphism of fundamental 
groups induced by inclusion).
Observe that if both boundary components lie in 
$\partial B_{r/2}(p_i)$ we obtain a contradiction
with the definition of $k_{ess}$.
If both lie in $\partial B_{r'-\delta'}(p_i)$
we push $S'$ very close to $\partial B_{r'-\delta'}(p_i)$,
so that its area is at most $Area(\partial B_{r'-\delta'}(p_i) 
\cap \Omega_{h}(\Si))$.
If one component of $\partial S'$ lies in $\partial B_{r'-\delta'}(p_i)$
and another component lies in $\partial B_{r/2}(p_i)$
we can isotop $S'$ so that it is a graphical sheet
of area at most $Area(\Si \cap (B_{r'-\delta'}(p_i) \setminus B_{r/2}(p_i))) + O(h)$.

Suppose now that $S'$ a pair of pants.
It follows by examining the homomorphism
from $\pi_1(S') = \mathbb{Z} * \mathbb{Z}$ to $\pi_1((B_{r'-\delta'}(p_i) \setminus B_{r/2}(p_i)) \cap \Omega_{h}(\Si)) = \mathbb{Z}_1$
that $S$ can have either $0$ or $2$ essential boundary components.
In both cases we can deform it similarly to 
the case of an annulus, but with a narrow tube attached.

By Lemma \ref{essential} we have that
the area bound c) is satisfied for $h$
sufficiently small.

It is straightforward to check that the above
deformations can be done so that if
$\Delta_j \cap B_{r'}(p_i) \neq \emptyset$
and $\G$ was $(k',\delta')$-stacked in $\Delta_j$
for $k' = k$, then it will be 
$(k,\delta)$-stacked after the deformations.

Suppose $\G$ was $(k',\delta')$-stacked in $\Delta_j$
for $k' > k$. Then after the deformation
there will be some open subset $U \subset S_j$,
such that for every $x \in S_j$ we have
$P^{-1}(x) \cap \G_1 $ has less than $k$ points.
It follows that $k_{ess}(\G_1,j, 3/4)< k'$.
\end{proof}

\subsection{Proof of Proposition \ref{canoncial form}.}

Fix $\delta>0$. 
First we construct a deformation of $\G$
to a surface that is $(\delta/10,k)$-stacked in each
cell $\Delta_i$ for some integer $k$, while increasing its area by at most $\delta/2$.

Recall the definition of $c$ from (\ref{triangulation}).
Let $\delta_i = \min\{c/2, \frac{1}{2^i} \frac{\delta}{100}\}$.
We will construct a 
sequence of surfaces $\G^0,...,\G^N$, such that

1*) $\G^1= \G$ and 
$\G^N$ is $(\delta/10,k)$-stacked in each
cell $\Delta_i$.

2*) $Area (\G^{i+1}) \leq Area(\G^{i})+ \delta_i$
and for every $p_j$,  
$$Area (\G^{i+1} \cap B_r(p_j)) \leq Area(\G^{i} \cap B_r(p_j))+ \delta_i$$

3*) There exists an isotopy $\{\G_t^i\}$
with $\G_t^i \subset N_{h_0}(\Si)$,
such that $\G_0^i=\G^i$, $\G_1^i=\G^{i+1}$ and 
$Area(\G_t^i) < Area(\G^i) + \delta/2$ for $t \in [0,1]$.


The process consists of a finite number of iterations.
The $l$'th iteration will consist of $m_l \leq m$ steps.
Let $\tilde{m}(l) = \sum_{l' \leq l} m_{l'}$.
For $j=1,..,m_l$ we  deform 
$\G^{\tilde{m}(l-1)+j-1}$ into $\G^{\tilde{m}(l-1)+j}$.
At the $j$'th step of $l$'th iteration we
apply Lemma \ref{stacking} to $\G^{\tilde{m}(l-1)+j-1}$
to construct an isotopy to $\G^{\tilde{m}(l-1)+j}$,
which is $(k_{\tilde{m}(l-1)+j}, \delta_{\tilde{m}(l-1)+j})$-stacked
in the cell $\Delta_j$. 
Now by induction and Lemma \ref{stacking} d) we have two possibilities:

1) $\G^{\tilde{m}(l-1)+j}$ is
$(\delta/10, k_{\tilde{m}(l-1)+j})$ stacked in 
cells $\Delta_1, ..., \Delta_j$; 

2) $\G^{\tilde{m}(l-1)+j}$ is $(k', \delta/10)$-stacked
in $\Delta_{j-1}$ and
$k_{ess}(\G^{\tilde{m}(l-1)+j}, j-1, 3r/4)< k'$.

In the second case we apply Lemma 
\ref{stacking} to $\G^{\tilde{m}(l-1)+j}$ in 
the cell $\Delta_{j-1}$. This deformation (preceded
by an application of a squeezing map $P$ if necessary) will,
by Lemma \ref{essential},
reduce the area of the surface by at least $c - \delta_{\tilde{m}(l-1)+j}> c/2$.
Since the area of $\G_n$ can not be negative,
we must have that eventually it is stacked
in every cell. The total area increase after all the deformations
is at most $\sum \delta_n < \delta/10$.
This concludes the proof of 
Proposition \ref{canoncial form}.

\section{Tubes, cables and root sliding} \label{sec:cables}

\begin{defi} \label{tube}
(Definition of a tube.) Let $\gamma:[0,1] \rightarrow N$ be an embedded 
curve and $\exp_{\gamma}: [0,1] \times D^2$
be the normal exponential map and suppose 
$\exp_{\gamma}$ is a diffeomorphism onto
its image for $v \in D^2$ with $|v| \leq 2 \varepsilon$.
We will say that $T = \{\exp_{\gamma}(t,v): |v| = \varepsilon \}$
is an $\varepsilon$-tube with core curve $\gamma$.
\end{defi}

In this paper we will often need to isotopically deform
parts of a surface so that it looks like a disjoint union of long
tubes. We will then need to move these tubes around in 
a controlled way.
Here we collect several definitions and lemmas
related to this procedure.

\begin{defi} \label{def:cable}
Let $\G$ be an embedded surface in $M$.
$\mathcal{C} = \{ (A_i, \gamma_i, \varepsilon_i)\}_{i=1}^k$ will be called a \textbf{cable}
of thickness $\varepsilon>0$ with \textbf{root balls} $B_1$ and $B_2$
and necks $A = \bigcup_{i=1}^k A_i$ ,
where

(1) $\{ A_i \subset \G \}$ is a collection of disjoint 
$\varepsilon_i$-tubes, $\varepsilon_i \leq \varepsilon$ with core curves $\gamma_i$;

(2) $B_1$ and $B_2$ are disjoint opens 
balls of radius $r> \varepsilon$.
For $j=1,2$ we have that $B_j \cap \G = \sqcup_{i=1}^k {\tilde A}_{j,i}$,
where each ${\tilde A}_{j,i}$ is homeomorphic to an annulus with boundary
circles ${\tilde c}_{j,i}^1$ and ${\tilde c}_{j,i}^2$, 
satisfying ${\tilde c}_{j,i}^1 \subset A_i$ and 
 ${\tilde c}_{j,i}^2 \subset M \setminus A$ and 
 with $\gamma_i(0) \subset B_1$ and $\gamma_i(1) \subset B_2$;


(3) let $N_i$ denote the solid cylinder bounded by $A_i$, 
$N_i=\{\exp_{\gamma_i}(t,v): |v| \leq \varepsilon_i \}$,
then $N_i \subset N_1$ for all $i$.
\end{defi}

In the following Lemma \ref{neck squeeze} we observe that if a cable has 
sufficiently small thickness then we can squeeze it
towards the core curve $\gamma_1$ to make the total
area of necks arbitrarily small.

\begin{lemma} \label{neck squeeze}
There exists a constant $C_{sq}>0$,
such that for all sufficiently small $\varepsilon>0$
the following holds. If 
$\G_0$ is a surface with 
a cable $\mathcal{C} = \{ (A_i, \gamma_i, \varepsilon_i)\}_{i=1}^k$ 
of thickness $\varepsilon>0$
with root balls $B_r(p_1) $ and $B_r(p_2)$ then 
there exists an isotopy $\G_t$, $t \in [0,1)$ such that:

(1) $Area(\G_t) \leq Area(\G_0) + C_{sq} k \varepsilon^2$ for all
$t \in [0,1)$;

(2) $\G_t$ is a surface with 
a cable $\mathcal{C}_t = \{ (A_i^t, \gamma_i^t, \varepsilon_i^t)\}_{i=1}^k$ 
of thickness $\varepsilon_t = (1-t)\varepsilon$
with necks $A_t$
and root balls $B_{r_t}(p_1)$ and $B_{r_t}(p_2)$ of radius $r_t$;

(3) $\varepsilon_i^t$, $r_t$ and  $Area(A_t)$ 
are monotone decreasing functions of $t$ with
$A_t \rightarrow 0$, $\varepsilon_i^t \rightarrow 0$ 
and $r_t \rightarrow 0$ as $t \rightarrow 1$;

(4) $\gamma_1^t = \gamma_1$ for all $t \in [0,1)$.

\end{lemma}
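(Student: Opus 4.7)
The plan is to construct an ambient isotopy $\{\Phi_t\}_{t\in[0,1)}$ of $M$, supported in a small neighbourhood of $A\cup B_1\cup B_2$, that radially squeezes the enveloping solid cylinder $N_1$ toward the outermost core curve $\gamma_1$ by the factor $(1-t)$ while simultaneously contracting each root ball toward its centre $p_j$ by the same factor. Setting $\Gamma_t:=\Phi_t(\Gamma_0)$ yields the required isotopy of $\Gamma_0$. The crucial geometric input is the nesting hypothesis $N_i\subset N_1$ for all $i$, which makes it possible to execute the squeezing for all tubes at once in a single coordinate chart.

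Concretely, using the normal exponential map $\exp_{\gamma_1}\colon [0,1]\times D^2_{2\varepsilon_1}\hookrightarrow M$, define
\[
\Phi_t\bigl(\exp_{\gamma_1}(s,v)\bigr) := \exp_{\gamma_1}\bigl(s,(1-t)v\bigr)
\]
on the portion of $N_1$ lying outside $B_1\cup B_2$. This fixes $\gamma_1$ pointwise, establishing property (4), and carries each nested tube $A_i$ onto an $(1-t)\varepsilon_i$-tube whose core curve is $\Phi_t\circ\gamma_i$, preserving the nesting. Near each root ball separately, use a standard radial contraction toward $p_j$ by factor $(1-t)$, yielding a new root ball of radius $r_t=(1-t)r$; a smooth bump function supported in a slightly enlarged neighbourhood of $N_1\cup B_1\cup B_2$ interpolates between the two local constructions, so that $\Phi_t$ is the identity outside. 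The image $\Gamma_t$ is then a cable of the same combinatorial type with thicknesses $(1-t)\varepsilon_i$ and root ball radius $r_t$, and properties (2), (3), (4) follow from monotonicity of $1-t$ together with the explicit formula for $\Phi_t$.

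The main obstacle is the uniform area bound (1). On each tube $A_i$ a direct computation in the normal chart shows that $\Phi_t$ contracts the transverse direction by $(1-t)$ and distorts the longitudinal direction by $1+O(\varepsilon_i)$, so $\mathrm{Area}(\Phi_t(A_i))\le\mathrm{Area}(A_i)+O(\varepsilon_i\varepsilon)$, summing to an $O(k\varepsilon^2)$ contribution from all tubes. The more delicate point is the annular regions $\tilde A_{j,i}$ inside the root balls, which a priori span a ball of radius $r\gg\varepsilon$. The fix is to precede the radial contraction by a preliminary combing isotopy inside each root ball that replaces $\tilde A_{j,i}$ by a nearly flat annular collar of area $O(\varepsilon_i^2)$ attached to $\tilde c_{j,i}^1$; this is possible because $\tilde A_{j,i}$ is topologically an annulus in a ball of bounded geometry. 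After this step the subsequent radial contraction of each root ball is area-decreasing, and summing over the $k$ tubes and the two root balls yields the bound $C_{sq}k\varepsilon^2$, with $C_{sq}$ depending only on local curvature bounds of $M$.
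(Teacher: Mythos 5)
Your treatment outside the root balls matches the paper's: the radial squeezing of the nested tubes toward $\gamma_1$ through the normal exponential map is exactly the paper's map $\phi^i_t$ (the paper introduces auxiliary monotone functions $f_i$ to keep the squeezed tubes $A_i^t$ pairwise disjoint, but the idea is the same), and your estimate $O(k\varepsilon^2)$ for the resulting area increase from the longitudinal distortion is fine. The problem is in the root balls, where your proposal has a real gap, and where the paper uses a different and necessary tool.

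The paper extends the boundary isotopy across each root ball using Lemma \ref{patching in the ball}, i.e.\ the Colding--De Lellis blow down / blow up trick, which is precisely the device that controls the areas of \emph{intermediate} surfaces during an isotopy supported in a ball in terms of the lengths of the moving boundary curves. Your proposal replaces this with a ``preliminary combing isotopy'' claimed to take each annulus $\tilde A_{j,i}$ to a flat collar of area $O(\varepsilon_i^2)$, justified only by the remark that $\tilde A_{j,i}$ ``is topologically an annulus in a ball of bounded geometry.'' That remark gives topological flexibility, not area control: the original annulus can have area comparable to the root ball, and a generic isotopy flattening it will push the area up before it comes down. Exactly this difficulty is what the blow down / blow up lemma is for, and it does not appear in your argument.

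Your second claim, that after the combing the radial contraction of $B_r(p_j)$ toward $p_j$ by factor $(1-t)$ is area-decreasing, also fails as stated. Any ambient isotopy supported near the ball must interpolate between the identity outside and the $(1-t)$-contraction inside, so in the interpolating shell the surface necessarily stretches to bridge a radial gap of width $\sim tr$. Since the outer boundary circles $\tilde c^2_{j,i}$ are being dragged inward while the adjacent sheet of $\Gamma$ just outside the ball is essentially fixed, this stretch contributes area on the order of $k\varepsilon\cdot r$, which is not $O(k\varepsilon^2)$ when $r\gg\varepsilon$. Again, the paper's use of Lemma \ref{patching in the ball} sidesteps this: rather than contracting and stretching, one prescribes an isotopy of the boundary circles with bounded length and produces a filling isotopy of the annuli with excess area controlled by that boundary length. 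You should invoke that lemma (or reprove its content) in place of the combing-plus-contraction step.
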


Before proving Lemma \ref{neck squeeze}
we state the following auxiliary result.

\begin{lemma} \label{patching in the ball}
Let $B_1(0)$ be a ball in $\R^3$ and $\G \subset B_1(0)$
be a surface with $\partial \G \subset \partial B_1(0)$.
Let $\gamma_t$ be an isotopy of curves in $\partial B_1(0)$
with $\gamma_0 = \partial \G$ and $l(\gamma_t) < L$. Then
there exists an isotopy
$\G_t$ with $\G_0 = \G$, $\partial \G_t = \gamma_t$ and
$Area(\G_t) < Area(\G_0) + L$.
\end{lemma}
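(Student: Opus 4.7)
The strategy is to realize the required deformation as an ambient isotopy of $B_1(0)$ concentrated in a thin collar of $\partial B_1(0)$, with a preliminary straightening step to control the area.

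I would first apply the isotopy extension theorem to $\partial B_1(0) \cong S^2$ to extend $\{\gamma_t\}$ to an ambient isotopy $\phi_t : \partial B_1(0) \to \partial B_1(0)$ with $\phi_0 = \mathrm{id}$ and $\phi_t \circ \gamma_0 = \gamma_t$. For a small $\varepsilon>0$, use inward normal coordinates $(p,r)\in \partial B_1(0) \times [0,\varepsilon]$ to identify a tubular neighbourhood $C_\varepsilon$ of $\partial B_1(0)$ in $B_1(0)$, and extend $\phi_t$ to an ambient isotopy $\Phi_t$ of $B_1(0)$ by $\Phi_t(p,r) = (\phi_{t\chi(r/\varepsilon)}(p),\,r)$ on $C_\varepsilon$, where $\chi:[0,1]\to[0,1]$ is a smooth cutoff with $\chi(0)=1$, $\chi(1)=0$, and $\Phi_t=\mathrm{id}$ outside $C_\varepsilon$.

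The candidate isotopy is $\G_t := \Phi_t(\G)$: it satisfies $\G_0=\G$, $\partial \G_t = \gamma_t$, and agrees with $\G$ outside the collar. To prove the area bound, I would prepend a preliminary straightening isotopy that deforms $\G$ inside a slightly larger collar $C_{2\varepsilon}$ so that $\G \cap C_\varepsilon$ becomes the canonical product surface $\gamma_0 \times [0,\varepsilon]$; since $B_1(0)$ is a ball this ambient straightening is available, and by taking $\varepsilon$ small the area change it produces can be made arbitrarily small (both the original and canonical pieces have area $O(\varepsilon)$).

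After straightening, the image $\Phi_t(\gamma_0\times[0,\varepsilon])=\{(\gamma_{t\chi(r/\varepsilon)}(s),r)\}$ has area that, in the nearly-product metric on $C_\varepsilon$, decomposes into a length contribution $\int_0^\varepsilon l(\gamma_{t\chi(r/\varepsilon)})\,dr < L\varepsilon$ and a sweeping cross-term controlled by $\chi'$ and by the tangential speed of the ambient extension $\phi_u$. The hard part will be absorbing this cross-term: it requires selecting $\phi_u$ as the time-1 flow of a vector field supported in a thin tubular neighbourhood of the moving curves $\{\gamma_t\}$ (so that the tangential speed is comparable to the normal displacement of the curves rather than to their absolute position on $\partial B_1(0)$) and choosing $\chi$ with small derivative. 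After this, a direct computation of the induced area gives $\mathrm{Area}(\G_t) < \mathrm{Area}(\G) + L$ for every $t$, as required.
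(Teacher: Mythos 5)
You propose to realize the isotopy by an ambient diffeotopy $\Phi_t$ supported in a thin collar $C_\varepsilon$ of $\partial B_1(0)$. This is not the mechanism the paper uses, and it contains a gap that I do not think can be closed.

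The paper's proof (cited to \cite{cd}, Lemma 7.6, the ``blow down -- blow up'' trick, recorded here as Lemma \ref{radial}) is of a different nature: one radially shrinks $\G$ to a copy $\rho\G$ of arbitrarily small scale $\rho$, attaches the radial cone from $\rho\gamma_t$ out to $\gamma_t\subset\partial B_1(0)$, and lets the \emph{shrunk} surface, not the collar, absorb the distortion of the ambient extension. The cone over $\gamma_t$ has area $\tfrac{1-\rho^2}{2}\,l(\gamma_t)<L/2$, and the inner piece $\rho\,\phi_t(\G)$ has area $\le \rho^2\sup_t\|D\phi_t\|_\infty^2\,\mathrm{Area}(\G)$, which is rendered harmless by taking $\rho$ small \emph{after} $\sup_t\|D\phi_t\|$ has been fixed. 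The point is that the potentially enormous distortion of the ambient isotopy is applied to an object of arbitrarily small area, which is why no hypothesis on the speed of $\{\gamma_t\}$ is needed.

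Your construction cannot achieve this. The sweeping cross-term you identify is, in the collar coordinates, essentially
\[
t\int_0^1|\chi'(u)|\int_{\gamma_0}\bigl|\partial_s\gamma_{t\chi(u)}(s)\bigr|\,\bigl|\dot\phi_{t\chi(u)}(\gamma_0(s))\bigr|\,ds\,du ,
\]
and neither of your two proposed mitigations removes it. First, $\chi$ must interpolate from $1$ to $0$ on $[0,1]$, so $\int_0^1|\chi'|\ge 1$ and ``choosing $\chi$ with small derivative'' is not available; widening the collar rescales $r$ but leaves the $u$-integral unchanged. Second, restricting $\dot\phi_u$ to a thin tubular neighbourhood of $\gamma_u$ does not reduce the quantity $|\dot\phi_u(\gamma_u(s))|$ appearing in the integrand: this is evaluated \emph{on} the curve, where the tangential speed of any ambient extension is bounded below by the normal sweep speed of the curve family $\{\gamma_t\}$, a quantity the hypotheses do not control (the lemma bounds only $l(\gamma_t)$, not the speed of the isotopy). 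The cross term is therefore of size $L\cdot\sup_{t,s}|\dot\phi_t(\gamma_0(s))|$, which does not tend to $0$ as $\varepsilon\to 0$ and can exceed $L$. What is missing is precisely the ``blow down'': a preliminary global radial shrink of $\G$ into an arbitrarily small ball so that the subsequent distortion acts on negligible area, not the local collar-straightening you describe, which leaves the bulk of $\G$ at unit scale and exposed to the full distortion of $\Phi_t$.
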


\begin{proof}
The result follows by the blow down - blow up trick from \cite{cd} as in 
the other parts of this paper.
\end{proof}

\begin{proof}
Let $\phi_t^1: N_1 \rightarrow N_1$ be a map given by
$\phi_t^1(exp_1(v)) = exp_1 (tv)$ for $v \in N \gamma_1$.

Choose monotone decreasing functions 
$f_i:[0,1] \rightarrow [0,1]$,
$2 \leq i \leq k$, so that the map
$\phi_t^i: N_i \rightarrow M$ defined by
$\phi_t^i(v) = \phi_t (exp_1^{-1} (exp_i(f_i(t) v )))$
is a diffeomorphism onto its image and 
$A_i^t = \phi_t^i(A_i)$ are all disjoint.

This defines the desired isotopy outside
of the root balls $B_r(p_1)$ and $B_r(p_2)$.
We extend the istopy inside the balls
using Lemma \ref{patching in the ball}.
This finishes the proof of Lemma \ref{neck squeeze}.
\end{proof}

Given a surface $\G$ with a cable we define 
a new surface obtained by sliding the root ball $B_1$
 as illustrated on Fig. \ref{fig:sliding}.

\begin{defi} \label{def:cable surgery}
Let $\G_0$ be a surface  with cable
$\mathcal{C} = \{ (A_i, \gamma_i, \varepsilon_i)\}_{i=1}^k$ 
of thickness $\varepsilon>0$ with root balls $B_1$ and $B_2$.
Let ${\tilde c}_{j,i}^1$ and ${\tilde c}_{j,i}^2$
be as in Definition \ref{def:cable}.
Let $B_3$ be a ball intersecting $\G_0$ in a disc and
$\alpha \subset (\G_0 \setminus (B_1 \cup B_2 \cup B_3)$ an arc with endpoints
$\alpha(0) \subset \tilde{c}^2_{1,1}$ and $\alpha(1) \subset \partial B_3$.

Let $\beta$ denote an arc in $\partial B_1$
connecting the endpoint of $\alpha(0)$ to the endpoint 
of $\gamma_1 \setminus B_1$. Let $\tilde{\gamma}_a = \alpha \cup \beta
\cup (\gamma_1 \setminus B_1)$.
Perturb $\tilde{\gamma}_a$ in the direction normal
to $\G_0$, so that it does not
intersect $\G_0 \setminus \cup A$ except at the endpoints. 
Let $\tilde{\gamma}_b$ denote an arc obtained by perturbing
$\alpha$ to the other side of $\G_0$ from $\tilde{\gamma}_a$.

We will say that $\G_1$ is obtained from $\G_0$
by $\varepsilon$-cable sliding if the following holds:

(a) $\G_1$ has a cable 
$\mathcal{C}_1 =  \{ (A_i^1, \gamma_i^1, \varepsilon_i^1)\}_{i=1}^k$
with $\varepsilon_i^1 \leq \varepsilon_1$, $\gamma_1^1 = \tilde{\gamma}_a$
and root balls $B_3$ and $B_2$;

(b) $\G_1$ has a cable 
$\mathcal{C}_1 =  \{ (A_i^2, \gamma_i^2, \varepsilon_i^2)\}_{i=2}^{k}$
with $\varepsilon_i^1 \leq \varepsilon_1$, $\gamma_2^2 = \tilde{\gamma}_b$
and root balls $B_1$ and $B_3$;

(c) Setting $A^1 = \cup_{1}^k A_i^1$
and $A^2 = \cup_{2}^k A_i^2$ we have 
$\G_1 \setminus (A_1 \cup A^2 \cup B_1 \cup B_3) 
= \G_0 \setminus (A \cup B_1 \cup B_3)$.
\end{defi}

The following lemma allows us to slide the root
of a cable along a curve contained in $\G$ 
while increasing its area by in a controlled way.

\begin{figure} 
   \centering	
	\includegraphics[scale=0.85]{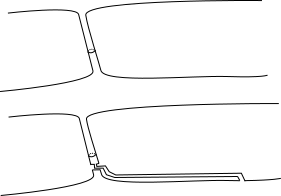}
	\caption{Sliding the root of the neck.}
	\label{fig:sliding}
\end{figure}

\begin{lemma} \label{lem: sliding}
For every $\varepsilon_0>0$ there exists $\varepsilon>0$ sufficiently small,
so that if $\G_1$ is obtained from $\G_0$
by $\varepsilon$-cable sliding then
there exists an isotopy $\G_t$, $t \in [0,1]$, such that
$Area(\G_t) \leq Area(\G) + \varepsilon_0$ for all $t \in [0,1]$.
\end{lemma}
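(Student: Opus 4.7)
The strategy is to exploit the thinness of the cable to reduce the sliding operation to a nearly one-dimensional move along the arc $\alpha$, which can be realized as a local isotopy in a narrow tubular neighbourhood with controlled area cost. The key enabler is Lemma \ref{neck squeeze}, which allows us to thin out the cables of both $\G_0$ and $\G_1$ before attempting to interpolate between them.

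Concretely, I would proceed in three stages. First, apply Lemma \ref{neck squeeze} to $\G_0$ to produce an isotopy to a surface $\widetilde{\G}_0$ whose cable has thickness $\varepsilon_\ast \ll \varepsilon$ and whose necks have total area less than $\varepsilon_0/10$; by choosing $\varepsilon$ small enough that $C_{sq} k \varepsilon^2 < \varepsilon_0/10$, the area increase in this stage stays within budget. Symmetrically, apply Lemma \ref{neck squeeze} to each of the two cables of $\G_1$ (described by (a) and (b) of Definition \ref{def:cable surgery}) and reverse the resulting isotopy to obtain an isotopy from a thinned surface $\widetilde{\G}_1$ back to $\G_1$, again within area budget $\varepsilon_0/10$.

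Second, construct an isotopy from $\widetilde{\G}_0$ to $\widetilde{\G}_1$. Since both surfaces differ from each other only in a neighbourhood of $\alpha \cup B_1 \cup B_3$ (the complements being equal by property (c) of Definition \ref{def:cable surgery}), and since the cables in both surfaces have thickness $\varepsilon_\ast$, the required deformation amounts to pushing a collection of very thin nested tubes out of $B_1$, along a tubular neighbourhood of $\alpha$, and into $B_3$, together with a parallel push along the perturbation $\tilde{\gamma}_b$ to realise the secondary cable. The extra area introduced by these thin handles is at most $O(\varepsilon_\ast \cdot \mathrm{length}(\alpha))$, which can be made less than $\varepsilon_0/10$ by further shrinking $\varepsilon_\ast$. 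Inside the root balls $B_1$, $B_2$, $B_3$, where the boundary circles of the tubes need to be reconfigured, I apply Lemma \ref{patching in the ball} to produce the required patching isotopy with area cost controlled by the lengths of the boundary circles, which are again $O(\varepsilon_\ast)$. Finally, concatenating the three isotopies $\G_0 \rightsquigarrow \widetilde{\G}_0 \rightsquigarrow \widetilde{\G}_1 \rightsquigarrow \G_1$ and applying the blow-down / blow-up trick of \cite{cd} (as in Lemma \ref{radial}) localises the total area excursion at each stage and yields the claimed bound.

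\textbf{Main obstacle.} The principal difficulty is topological bookkeeping: I must verify that the middle isotopy genuinely exists, namely that the thinned versions of $\G_0$ and $\G_1$ are ambient isotopic in the prescribed way, preserving the nesting condition (3) in Definition \ref{def:cable} of tubes inside $N_1$, and that the perturbations $\tilde{\gamma}_a, \tilde{\gamma}_b$ can be arranged to avoid the remaining necks. This is where the hypothesis that $\alpha$ lies in $\G_0 \setminus (B_1 \cup B_2 \cup B_3)$ becomes crucial, because it guarantees that a thin product neighbourhood of $\alpha$ transverse to $\G_0$ is unobstructed by any tubes of the original cable, so that sliding a thin handle along it is an honest isotopy rather than a surgery.
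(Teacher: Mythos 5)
Your overall strategy matches the paper's: first use Lemma \ref{neck squeeze} to thin the cable, then slide the root along the arc $\alpha$ in a tubular neighbourhood, and finish with the patching Lemma \ref{patching in the ball} inside the root balls. The gap is in the middle step. You assert that the sliding isotopy costs area $O(\varepsilon_* \cdot \mathrm{length}(\alpha))$, but you do not justify two things that this estimate tacitly assumes: (1) that the part of $\G$ away from the tubes (the ``base surface'' $S$ that $\alpha$ lies in) does not pick up area while the attachment point moves along $\alpha$, and (2) that the core curves of the tubes do not stretch arbitrarily during the slide, which would invalidate the area bound. The paper addresses both by constructing the slide via an explicit family of ambient diffeomorphisms: it straightens a neighbourhood of $\alpha$ to a model $N_{\varepsilon_1}(I_L) \subset \R^3$ with $\G_0$ mapped to the $xy$-plane $S_{xy}$, and then translates along the $x$-axis by diffeomorphisms $\phi_t$ generated by vector fields tangent to $S_{xy}$ (property (ii)). Tangency guarantees $\tilde\phi_t$ restricts to a diffeomorphism of $S$, so $Area(\tilde\phi_t(S)) = Area(S)$ exactly during the slide; and property (iv) bounds $l(\phi_t(\Phi(\gamma_i))) < 10\, l(\gamma_i)$, which together with compactness gives the stated smallness of the tube areas after shrinking $\varepsilon$. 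Without some version of this tangency/length control, ``pushing tubes along a tubular neighbourhood of $\alpha$'' is not an argument, only a description of the intended conclusion, so your estimate is unjustified at exactly the place where the lemma has content.

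A secondary point: the paper does not need to thin $\G_1$ separately and run an isotopy backward from $\widetilde{\G}_1$ as you propose; it suffices to build the slide forward from $\G_0$ (after thinning) and then, at time $1$, isotope each pushed tube $\tilde\phi_1(A_i)$ to coincide with $\partial N_{\varepsilon_i}(\tilde\phi_1(\gamma_i))$, recovering the cable structure of $\G_1$. Your symmetric two-sided set-up is not wrong, but it is an extra layer you do not need, and it introduces the additional burden of checking that the two thinned surfaces land in a common isotopy class in the narrow neighbourhood — the very claim you flag as the ``main obstacle.'' The paper's one-directional construction avoids that matching problem entirely.
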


\begin{proof}
By Lemma \ref{neck squeeze} we may assume that the thickness
$\varepsilon$ and the radius of the root balls $r$ are as small as we like.

Let $I_L = \{ (x,0,0) | 0 \leq x \leq L \} \subset \R^3$
and
$S_{xy}$ denote the $xy$-plane in $\R^3$.
Let $L= l(\alpha)$.
For every $c>0$ there
exists $\varepsilon_1>0$
and a diffeomorphism
$\Phi: N_{\varepsilon_1}(\alpha) \rightarrow N_{\varepsilon_1}(I_L) \subset \R^3$,
such that


(a) $\Phi(\alpha) = I_L$;

(b) $\Phi(S \cap N_{\varepsilon_1})= S_{xy} \cap N_{\varepsilon}(I_L)$;

(c) $1-c\leq  ||D \Phi|| \leq 1+c$.

\noindent
Fix $c<1/10$ to be chosen later (depending on $\varepsilon_0$) and 
assume $20 \varepsilon < \varepsilon_1$ and $2r < \varepsilon_1$.
Let $q_a= = \Phi(\alpha(0))$ and $q_b = \Phi(\alpha(1))$.
By our choice of $c$ we have $ \Phi(B_{1}) \subset B_{\varepsilon_1/2}(q_a)$

It is straightforward to construct a 1-parameter family 
of diffeomorphisms $\phi_t: N_{\varepsilon}(I_L) \rightarrow N_{\varepsilon}(I_L)$,
$\phi_0 = id$, and
generated by a 1-parameter family of compactly supported vector fields
 $\xi_t$ with the following properties:
 
 (i) $\phi_t(B_{\varepsilon_1/2}(q_a))$ is an isometric copy of 
 $B_{\varepsilon_1/2}(q_a)$ translated distance $tL$ along the
 $x$ axis;
 
 (ii) $\xi_t(p)$ lies in $S_{xy}$ for every $p \in S_{xy}$;
 
 (iii)  $\xi_t$ is supported in $N_{2 \varepsilon_1/3}(\alpha)$ for all $t$;
 
 (iv) $l(\phi_t(\Phi(\gamma_i)) )< 10 l(\gamma_i)$ for all $t$.

Composing with $\Phi^{-1}$ we obtain a $1$-parameter 
family of diffeomorphisms $\tilde{\phi}_t: M \rightarrow M$.
Observe that by condition (ii) the restriction of $\tilde{\phi}_t$ to
$S$ is a diffeomorphism of $S$, in particular, 
$Area (\tilde{\phi}_t(S)) = Area(S)$.
By compactness we can choose $\varepsilon \in (0, \varepsilon_1/20)$ 
sufficiently small, so that $Area(\tilde{\phi}_t(S)) \leq \frac{1}{10 k} \varepsilon_0$
for $t \in [0,1]$. 
Moreover, we can isotop each surface $\tilde{\phi}_1(A_i)$ so that 
it coincides with $\partial N_{\varepsilon_i}(\tilde{\phi}_1(\gamma_i))$.

This finishes the construction of the desired isotopy.
\end{proof}

\begin{lemma} \label{lem: tree to necks}
For every $\varepsilon>0$ there
exists a $\delta>0$ with the following property. 
Suppose a surface $\G$ is in a canonical form, in particular it is $(\delta,m)$-stacked in
every ball $B_r(x_j)$. There exists an isotopy of 
$\G$, increasing the area of $\G$ by at most $\varepsilon$,
so that the thin part $T = \sqcup A_i$, where each $A_i$
is homeomorphic to an annulus.
\end{lemma}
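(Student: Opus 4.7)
The plan is to reduce each connected component of the thin part $Y$, presently of the form $\partial N(G_j)$ for some embedded graph $G_j \subset M$ with endpoints on the sheets $D_i$, to a single annulus by iteratively simplifying each $G_j$ via the cable-sliding operation of Lemma \ref{lem: sliding}. Any pure closed-loop components of $Y$ (tori bounding solid tori) will be contracted into small balls by area-decreasing isotopies and then reclassified into the closed-surface part $X$ of the decomposition.

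First I would choose $\delta$ small enough relative to $\varepsilon$ that every individual cable slide, performed after a prior application of Lemma \ref{neck squeeze} to re-thin the relevant tubes, contributes area at most $\varepsilon/N$, where $N$ is an a priori bound on the total number of slides needed. The bound $N$ comes from the combined combinatorial complexity of the graphs $G_j$ (total vertex count plus first Betti number), which is itself controlled by the stacking number $m$ and the genus bound built into the $(\delta, m)$-stacked canonical form.

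The core inductive step handles a branching vertex $v$ of valence $k \geq 3$ in some $G_j$, with incident arcs $a_1, \ldots, a_k$. I would view the tube around $a_1$ as a single-tube cable with root balls near $v$ and near its far endpoint on a sheet, and choose a companion arc $\alpha \subset \G$ that runs along the exterior of the tube around $a_2$ from near $v$ to a small disc-ball near the far endpoint of $a_2$. Applying Lemma \ref{lem: sliding} slides the root at $v$ along $\alpha$, detaching $a_1$ from $v$ and reattaching it parallel to $a_2$; the valence of $v$ drops by one. Embedded cycles in $G_j$ with a pendant arc are handled by the same mechanism (treating the attachment point as a branching vertex and sliding one side of the cycle along the pendant), and iteration reduces every $G_j$ to a disjoint union of simple arcs, whose tubular-neighbourhood boundaries are annuli.

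The main obstacle will be to maintain structural control between slides: each slide must leave the remaining arms of $v$ still organised as thin tubes over well-defined arcs so that Lemma \ref{lem: sliding} is applicable again, and the cumulative area increase must stay within $\varepsilon$. This is handled by performing the slides in a prescribed order (eliminating branching vertices one at a time, processing a given $G_j$ before moving to the next), by interleaving applications of Lemma \ref{neck squeeze} with each slide to keep all tubes sufficiently thin, and by choosing $\delta$ small enough at the outset that the cumulative $N$-slide cost is bounded by $\varepsilon$.
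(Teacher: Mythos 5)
Your proposal captures the central mechanism of the paper's proof: iteratively apply the cable-sliding move of Lemma \ref{lem: sliding}, interleaved with Lemma \ref{neck squeeze} to re-thin the tubes, until the thin part consists of annuli, choosing $\delta$ small enough up front that the bounded number of slides costs less than $\varepsilon$ in area. That is the right strategy and the right tool.

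However, your organizing principle differs from the paper's in a way that leaves a gap. The paper does not work directly with abstract graph vertices and their valences; it selects a cell $\Delta_j$ with $T_1 \cap \partial\Delta_j$ nonempty, takes an \emph{innermost} circle $\gamma_1$ of that intersection, and splits into two cases. When $\gamma_1$ bounds a disc in $T_1$, it uses a blow-down/blow-up shrinking move (Lemma \ref{radial}), not a slide, to push a fold out of the cell. When $\gamma_1$ separates boundary components, it collects \emph{all} components of $T$ inside the small disc in $\partial\Delta_j$ bounded by $\gamma_1$ — possibly belonging to components of $T$ other than $T_1$ — and slides them \emph{together} as a single multi-tube cable with the nesting condition $N_i\subset N_1$ of Definition \ref{def:cable} satisfied. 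This innermost-circle bookkeeping is exactly what makes Lemma \ref{lem: sliding} legitimately applicable. Your one-arm-at-a-time valence reduction at a branching vertex $v$ does not address the case that other tubes of $T$ are nested inside the tube around $a_1$, or link with $a_2$; in that situation viewing $a_1$ as a single-tube cable and sliding it along $\alpha$ on the outside of $a_2$'s tube is not a valid cable slide and would collide with or entangle the other tubes. Your plan needs the innermost choice (or an equivalent device) to guarantee disjointness during the slide.

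A secondary gap: you propose to contract closed-loop (torus) components of $Y$ into small balls and reclassify them into $X$. This works when $\Si\cong S^2$, so that $N_{h_0}(\Si)$ is simply connected — which is the situation in which the lemma is invoked downstream — but it is not automatic from the lemma's hypotheses as stated, since a loop wrapping a handle of a higher-genus $\Si$ cannot be contracted to a point. The paper's argument tacitly assumes every component of $T$ has at least two boundary circles; a careful write-up should either justify that no closed components of $T$ occur in the canonical form, or restrict the claim to the spherical case.
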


\begin{proof}
If every connected component of $T$ has
2 boundary components then we are done.

Suppose $T_1$ is a connected component with $k \geq 3$
boundary components.
We describe how to use the root sliding lemma
to deform $T_1$ into two disjoint thin
subsets, each having a smaller number of boundary
components.

Since the surface is in a canonical form,
there exists a cell $\Delta_j$ with $\gamma = \partial \Delta_j \cap T_1$
non-empty.
Let $\gamma_1$ denote an inner most
closed curve of $\gamma$.
Let $D_1$ denote the small disc $\gamma_1$ bounds in  $\partial \Delta_j$.
We have that interior of $D_1$ does not intersect 
$T_1$.
(Note, however, that there could be
 connected components of 
 $(T \setminus T_1) \cap \partial \Delta_j$
intersecting $D_1$).

We consider two possibilities:

1) $\gamma_1$ bounds a disc $D \subset T_1$.
Then we can find a ball $B\subset N_{h_0}(\Si)$
with $\partial B = D_1 \cup D$.
Let $\tilde{B}$ denote a small ball 
with $D_1 \subset \partial \tilde{B}$ 
and $int(\tilde{B}) \cap int(B) = \emptyset$.
There exists a diffeotopy $\Phi_t$
of $N_{h_0}(\Si)$, such that 
$\Phi_1(B) \subset  \tilde{B}$.
It is straightforward to check 
that using repeated application 
of the blow down - blow up trick Lemma \ref{radial}
we can make sure that the areas of $\Phi_t(\G)$
do not increase by more than $O(\delta)$.
In the end, we obtain that the number of 
connected components of $T_1 \cap \partial \Delta_j$
has decreased by one.

2) $\gamma_1$ separates  connected components
of $\partial T_1$. Let $A$ denote the 
component of $T_1 \setminus \gamma_1$,
which has more than 2 boundary components.
Let $\alpha$ be a path in $A$ from $\gamma_1$
to a different boundary component of $A$
and into the thick part of $\G$.
Let $\tilde{\gamma}$ denote all connected 
components of $T$ that are contained inside
a small disc bounded by $\gamma_1$ (including
$\gamma_1$).
Let $\tilde{T}$ denote a small neighbourhood
of $\tilde{\gamma}$ in $T$.
In a small neighbourhood of $D_i$
we can isotop $\tilde{T}$ so that it satisfies 
the properties of a cable with two roots.
We can then use Lemma \ref{lem: sliding}
to move one of the roots into the thick part
of $\G$. As a result we reduced the number
of boundary components of $T_1$.

Suppose the first possibility occurs.
Then we have decreased the number components of
$T_1 \cap \partial \Delta_j$ by $1$. We choose an inner most connected 
component once again. Eventually 
we will encounter possibility 2.
Then we split $T_1$ into two connected components
with a strictly smaller number of boundary components.
\end{proof}

\section{Opening Long Necks}\label{opennecks}
In this section, we prove the following:
\begin{prop}\label{movehandles}
Let $\Sigma$ be a strictly stable minimal two-sphere and let $\Gamma\subset N_{h_0}(\Gamma)$ be a two-sphere in $(\delta,k)$ canonical form for some $k>1$.  Then there exists an isotopy $\Gamma_t$ beginning at $\Gamma_0=\Gamma$ through surfaces with areas increasing by at most $\delta$ so that in some cell, the number of essential components of $\Gamma_1$ is fewer than $k$.
\end{prop}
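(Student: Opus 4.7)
The plan is to exploit the tree structure of $\Gamma$ and apply the Generalized Light Bulb Theorem to localize an essential neck to a single cell, where the essential multiplicity can then be reduced by a further local isotopy.

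First I would apply Lemma \ref{lem: tree to necks} to isotope $\Gamma$ (with arbitrarily small area change) so that the thin part is a disjoint union of annular tubes $T_1, \ldots, T_n$, each with core arc $\gamma_j$ joining two sheets among $D_1, \ldots, D_k$. Because $\Gamma \cong S^2$ and each sheet $D_i$ is a subset of a graph over $\Sigma \cong S^2$, each $D_i$ is topologically a sphere with $m_i$ holes, one per attached tube. The Euler characteristic identity
\begin{equation*}
2 = \chi(\Gamma) = \sum_{i=1}^k (2 - m_i) = 2k - 2n
\end{equation*}
forces $n = k-1$, so the incidence graph with sheets as vertices and tubes as edges is a tree. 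Since $k \geq 2$ this tree has a leaf sheet $D_1$, with a single attached tube $T_1$, and $D_1$ is topologically a disc. Let $D_2$ denote the sheet at the other end of $T_1$ and $\gamma_1$ its core arc.

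The main step is an area-controlled isotopy moving the tube $T_1$ into a single ball $B_{r'}(p_i) \subset \Delta_i$. First apply Lemma \ref{neck squeeze} to shrink $T_1$ and every tube nested inside the solid region $N_1$ to arbitrarily small thickness, making their total area negligible. Because $\Gamma$ separates $N_{h_0}(\Sigma) \cong S^2 \times (-1,1)$, cutting along $\Gamma$ yields two simply connected $3$-manifolds, each having a copy of the sphere $\Gamma$ as a boundary component; the core $\gamma_1$ lies in one of them, call it $W$. Since $W$ is simply connected, $\gamma_1$ is free-boundary homotopic in $W$ to an arbitrarily short arc near any prescribed point $q \in \Gamma \cap \Delta_i$. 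The Generalized Light Bulb Theorem (Theorem \ref{general}) upgrades this homotopy to an ambient isotopy of $W$ carrying $\gamma_1$ to a short arc contained in $B_{r'}(p_i) \subset \Delta_i$. Thickening the core isotopy produces an ambient isotopy of $N_{h_0}(\Sigma)$ moving $T_1$ together with its nested subcables into $B_{r'}(p_i)$, while leaving $\Gamma \setminus (D_1 \cup T_1)$ essentially fixed.

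Finally, with $T_1 \subset B_{r'}(p_i) \subset \Delta_i$, the surface $(D_1 \cup T_1 \cup D_2) \cap B_{r'}(p_i)$ is a single annulus whose two boundary circles lie on $\partial B_{r'}(p_i)$ at the heights of $D_1$ and $D_2$, and it lies in the region separated from $D_3, \ldots, D_k$ by $D_2$. A further local isotopy supported in $B_{r'}(p_i)$, disjoint from $D_3, \ldots, D_k$, pushes this annulus into the shell $B_{r'}(p_i) \setminus B_{r/2}(p_i)$ so that it no longer crosses $\partial B_{r/2}(p_i)$. Then $\Gamma_1 \cap \partial B_{r/2}(p_i)$ has at most $k-2$ essential components, coming only from $D_3, \ldots, D_k$, so $k_{ess}(\Gamma_1, i, 3r/4) < k$ as required. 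The main obstacle in the argument is the area control in the Light Bulb step: the theorem is purely topological and a priori can increase area arbitrarily. This is resolved by the preliminary use of Lemma \ref{neck squeeze} to make all involved tubes extremely thin together with the blow down -- blow up trick of Lemma \ref{radial}, which realizes the Light Bulb isotopy as a concatenation of ambient diffeomorphisms supported in small balls, each producing only a negligible area change, so that the total area increase can be kept below $\delta$.
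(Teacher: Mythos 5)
Your overall strategy — use the Light Bulb Theorem (Proposition \ref{general}) to untangle a neck and localize it to a single cell, then reduce the essential multiplicity there — is the same as the paper's, and your Euler characteristic computation showing the incidence graph of sheets and tubes is a tree is a nice clean observation that the paper does not spell out. However, there is a genuine gap in the way you execute the localization step.

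The gap is in the claim that the annulus $(D_1 \cup T_1 \cup D_2) \cap B_{r'}(p_i)$ ``lies in the region separated from $D_3, \ldots, D_k$ by $D_2$.'' This is only true if $D_1$ and $D_2$ are adjacent in the height ordering of the sheets, and the tree structure does \emph{not} guarantee that a leaf's unique tube joins it to the sheet directly above or below it. A leaf's tube can skip several sheets, threading through small holes in the intermediate sheets. For example, with five sheets at heights $1,\dots,5$ and tree edges $1$--$3$, $3$--$5$, $5$--$2$, $2$--$4$, the leaves are sheets $1$ and $4$, and both of their tubes connect to non-adjacent sheets. In such a configuration, after you move $T_1$ into a ball, the intermediate sheets still cross the ball as (punctured) discs lying between $D_1$ and $D_2$, and you cannot push the annulus $(D_1\cup T_1\cup D_2)\cap B_{r'}$ out to the shell without crossing them. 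Consequently the essential multiplicity in the cell is not decreased. A related soft spot appears earlier in your argument: the claim that $\gamma_1$ is free-boundary homotopic in $W$ to an arbitrarily short arc ``near any prescribed point $q$'' is too strong, since a short arc in $W$ from $D_1$ to $D_2$ exists only near points where every intermediate sheet has a hole, and you cannot freely slide an endpoint of the tube across the attaching circle of another tube.

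The paper's proof avoids this by working with an innermost non-essential circle $C \subset \Gamma \cap \partial\Delta_j$ rather than with a leaf, observing that the two pieces $A, B$ of $\Gamma \setminus C$ each contain a sheet and hence contain two sheets that are \emph{consecutive in height}, and then invoking the root-sliding Lemma \ref{lem: sliding} to move both roots of the collar of $C$ onto those two consecutive sheets \emph{before} applying Proposition \ref{general}. Your argument omits the root-sliding step entirely (you invoke Lemma \ref{neck squeeze} for thinning but never Lemma \ref{lem: sliding}), which is precisely the ingredient needed to guarantee adjacency. If you add a step sliding the $D_2$-end root of $T_1$ along $\Gamma$ onto the sheet directly adjacent to the leaf (the path exists because the tree is connected, and the area is controlled by Lemma \ref{lem: sliding} once Lemma \ref{neck squeeze} has thinned the cable), the rest of your argument goes through.
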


The difficulty in Proposition \ref{movehandles} is that while the surface $\Gamma$ is in canonical form, there can be many wildly knotted, linked and nested arcs comprising the set of tubes.  In order to untangle this morass of tubes to obtain a vertical handle supported in a single ball requires the Light Bulb Theorem in topology, which we recall:

\begin{prop} [Light Bulb Theorem \cite{R}] \label{lb}
Let $\alpha(t)$ be an embedded arc in  $\mathbb{S}^2\times [0,1]$ so that $\alpha(0)=\{x\}\times\{0\}$ and $\alpha(1)=\{y\}\times\{1\}$ for some $x,y\in\mathbb{S}^2$.  Then there is an isotopy $\phi_t$ of $\alpha$ so that 
\begin{enumerate}
\item $\phi_0(\alpha)=\alpha$ 
\item $\phi_1(\alpha)$ is the vertical arc $\{x\}\times [0,1]$.
\end{enumerate}
\end{prop}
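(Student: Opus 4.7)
The plan is to prove Proposition \ref{lb} by a Morse-theoretic induction with respect to the height function $h: \mathbb{S}^2 \times [0,1] \to [0,1]$, using the classical light-bulb trick to handle the inductive step. This is a standard result from $3$-dimensional topology; see \cite{R}.

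First I would use path-connectedness of $\mathbb{S}^2 \times \{1\}$ to perform a preliminary isotopy of $\alpha$ supported in a small collar of the top boundary sphere, moving the endpoint $\alpha(1)=(y,1)$ to $(x,1)$ along a path in $\mathbb{S}^2 \times \{1\}$. After this reduction both endpoints of $\alpha$ lie on $\{x\} \times \{0,1\}$. Perturb $\alpha$ slightly so that $h \circ \alpha$ is Morse; its interior critical points are then non-degenerate local maxima and minima that necessarily alternate max--min--max--min along $\alpha$, starting with a max (because $h \circ \alpha$ increases out of $h \circ \alpha(0) = 0$) and ending with a min (because $h \circ \alpha$ increases into $h \circ \alpha(1) = 1$). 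We induct on the number $n$ of interior critical points.

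For the base case $n = 0$, the function $h \circ \alpha$ is monotone increasing, so $\alpha$ is the graph $\{(f(t), t) : t \in [0,1]\}$ of a continuous map $f: [0,1] \to \mathbb{S}^2$ with $f(0)=f(1)=x$. Any continuous null-homotopy of the loop $f$ (which exists because $\pi_1(\mathbb{S}^2) = 0$) yields an isotopy $\alpha_s(t) = (f_s(t), t)$ from $\alpha$ to the vertical arc; crucially, every intermediate $\alpha_s$ is automatically embedded because distinct values of $t$ have distinct heights. For the inductive step with $n \geq 1$, select an adjacent max--min pair along $\alpha$: a local maximum at height $M$ immediately followed by a local minimum at height $m < M$. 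The segment of $\alpha$ between (and slightly beyond) these critical points forms a \emph{hook} lying in $\mathbb{S}^2 \times [m - \varepsilon, M + \varepsilon]$ whose two ends exit downward through $\mathbb{S}^2 \times \{m - \varepsilon\}$.

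The key step, and the main obstacle, is to show that this hook can be pulled below the level sphere $\mathbb{S}^2 \times \{m - \varepsilon\}$ by an ambient isotopy supported in a neighborhood of the hook. If no other strand of $\alpha$ entered this region the isotopy would be immediate: one simply pushes the hook down into the floor sphere along a product isotopy. The difficulty is that other strands of $\alpha$ can pass through the region swept out. This is precisely where the light-bulb trick enters: each obstructing strand meets every level sphere in finitely many transverse points, and because $\mathbb{S}^2$ is simply connected we can continuously slide these intersection points around the sphere factor to push the offending strand off the sweepout region without creating new self-intersections. After clearing every such obstruction, a standard ambient isotopy contracts the hook past $\mathbb{S}^2 \times \{m - \varepsilon\}$, cancelling the max--min pair and reducing $n$ by two, which closes the induction.
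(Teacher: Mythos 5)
Your Morse-theoretic approach (induct on the number of interior critical points of the height function, using the light-bulb trick to clear obstructions) is a legitimate and classical route to the Light Bulb Theorem. It is also genuinely different from what the paper does: the paper simply cites Rolfsen for Proposition~\ref{lb} and only sketches a proof of the \emph{generalized} version (Proposition~\ref{general}), via a generic homotopy of arcs with at most one transverse double point at a time, rather than via Morse theory on the height function. Your preliminary normalization of the endpoint and your base case (null-homotoping the loop in $\mathbb{S}^2$, noting that distinct heights keep each intermediate arc embedded) are both correct.

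However, there is a genuine error in the geometry of the inductive step. You claim that the segment of $\alpha$ between (and slightly beyond) an adjacent max--min pair (local max at height $M$ immediately followed by a local min at height $m<M$) forms a hook lying in $\mathbb{S}^2 \times [m-\varepsilon, M+\varepsilon]$ \emph{both} of whose ends exit downward through $\mathbb{S}^2 \times \{m-\varepsilon\}$. That is not what the segment looks like. Going backward in parameter past the max, $h\circ\alpha$ decreases, so the left end does eventually cross $\mathbb{S}^2 \times \{m-\varepsilon\}$ going downward. But going forward past the min, $h\circ\alpha$ \emph{increases}, so the right end exits \emph{upward}, toward higher level spheres; the arc never dips below $m$ again on that side before the next critical point (a max). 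So the segment is shaped like a zigzag --- entering at level $m-\varepsilon$ going up, peaking at $M$, dipping to $m$, exiting upward through $M+\varepsilon$ --- not like an $\Omega$-shaped finger with both feet resting on $\mathbb{S}^2 \times \{m-\varepsilon\}$. Consequently, the deformation you propose, "push the hook down into the floor sphere by a product isotopy," has nothing to act on: there is no such hook around a max--min pair.

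A correct version of the step would do one of the following. Either (i) fix a single local maximum, take the finger above a level sphere just below it (this genuinely has both feet on one level), push the finger down using the light-bulb trick to clear obstructing strands, and then perform a Cerf/Whitney-type cancellation with the adjacent minimum once the max has been pushed down to the min's level --- noting that pushing the finger below a level does not by itself reduce the critical point count. Or (ii) work with the zigzag segment running from $m-\varepsilon$ up through $M$, down to $m$, up through $M+\varepsilon$, and straighten it so that $h$ becomes monotone on that parameter interval; that really does remove the max--min pair in one move, again using the light-bulb trick when other strands obstruct. The light-bulb sliding step itself is stated correctly in your proposal; what needs repair is the geometry of the hook and the resulting cancellation argument.
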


The Light Bulb theorem can be interpreted physically as that one can untangle a lightbulb cord hanging from the ceiling and attached to a lightbulb by passing the cord around the bulb many times.  

The simplest nontrivial case of Proposition \ref{movehandles} consists of two parallel spheres joined by a very knotted neck.  Here the Light Bulb Theorem \ref{lb} allows us to untangle this neck so that it is vertical and contained in one of the balls $B_i$. Thus the resulting surface is no longer in canonical form and we can iterate Step 1.  


We will in fact need the following generalization of the light bulb theorem (cf. Proposition 4 in  \cite{HT}):

\begin{prop}[Generalized Light Bulb Theorem]\label{general}
Let $M$ be a $3$-manifold and $\alpha$ an arc with one boundary point on a sphere component $\Gamma$ of $\partial M$ and the other on a different boundary component.  Let $\beta$ be a different arc with the same end points as $\alpha$.  Then if $\alpha$ and $\beta$ are homotopic, then they are isotopic.  

Moreover, if $\gamma$ is an arc freely homotopic to $\alpha$ (i.e. joined through a homotopy where the boundary points are allowed to slide in the homotopy along $\partial M$), then they are freely isotopic (i.e., they are joined by an isotopy with the same property).
\end{prop}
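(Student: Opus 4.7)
The plan is to reduce the proposition to the classical Light Bulb Theorem (Proposition \ref{lb}) by producing an \emph{embedded} disc in $M$ cobounded by $\alpha$ and $\beta$. Once such a disc is in hand, one obtains an ambient isotopy from $\alpha$ to $\beta$ by parameterizing the disc and applying isotopy extension. Since $\alpha$ and $\beta$ share endpoints and are homotopic rel $\partial M$, the concatenation $\alpha \cup \bar\beta$ defines a loop in $M$ that is nullhomotopic, so it bounds a singular disc $f \colon D^2 \to M$. By transversality I may assume $f$ is an immersion in general position whose self-intersection set is a disjoint collection of arcs and circles in $D^2$. Intersection arcs with endpoints on $\partial D^2$ can be pushed off since $\alpha$ and $\beta$ are already embedded, leaving only interior double-point circles to remove.

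To eliminate these remaining double-point circles I will invoke the \emph{light bulb trick}. The crucial observation is that a small meridian $m$ of $\alpha$ near its endpoint on the sphere component $\Gamma$ bounds a disc in $M \setminus \alpha$: one slides $m$ off the end of $\alpha$ along $\Gamma$ and contracts it to a point, using that $\Gamma \cong S^{2}$ is simply connected. Consequently the algebraic sign of any self-intersection circle of $f$ can be reversed by dragging a local sheet of $f(D^2)$ once around $\alpha$ and across $\Gamma$. After performing the requisite sign changes, oppositely signed circles may be paired and cancelled by the Whitney trick; in dimension three Whitney discs are only generically immersed, but the additional freedom furnished by the light bulb maneuver is exactly what is needed to promote them to embeddings. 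Iterating converts $f$ into an embedded disc $D$ with $\partial D = \alpha \cup \bar\beta$.

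Such an embedded $D$ can be parameterized by $F \colon [0,1] \times [0,1] \to M$ with $F(\cdot, 0) = \alpha$, $F(\cdot, 1) = \beta$, and $F(0,\cdot), F(1,\cdot)$ constant at the two fixed endpoints of $\alpha$ and $\beta$. Each level $F(\cdot,t)$ is then an embedded arc, so $F$ is an isotopy of arcs from $\alpha$ to $\beta$ that extends to an ambient isotopy of $M$ fixing $\partial M$ by isotopy extension. For the free isotopy statement, I first observe that on any connected boundary component of $M$, any two points are joined by a path, and such a path is realized by an ambient isotopy of $M$ supported in a collar neighborhood of $\partial M$. Applying this to each endpoint in turn brings the endpoints of $\gamma$ to those of $\alpha$; the given free homotopy then descends to a homotopy rel endpoints, and the first part of the proposition applies.

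I expect the main technical obstacle to be the cancellation of the double-point circles. In dimension three the classical Whitney trick fails without additional input, and it is precisely the sphere boundary component $\Gamma$ that supplies the free sign changes needed to make each Whitney disc embedded. Carefully bookkeeping the light bulb moves across all configurations of the double-point diagram --- including cases where circles are linked, nested, or meet the meridian of $\alpha$ in complicated ways --- is where the bulk of the technical work lies.
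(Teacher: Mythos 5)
Your strategy hinges on producing an \emph{embedded} disc $D \subset M$ with $\partial D = \alpha \cup \bar\beta$, and that is where the argument breaks down: such a disc need not exist. Take $M = S^2 \times [0,1]$ with $\beta = \{p\} \times [0,1]$ the vertical arc and $\alpha$ an arc with the same endpoints that ties a local knot. Then $\alpha \cup \bar\beta$ is a genuinely knotted loop in $M$, and a knotted loop cannot bound an embedded disc. Yet the Light Bulb Theorem does apply here, so $\alpha$ and $\beta$ \emph{are} isotopic rel endpoints. The resolution is that the light bulb isotopy $h_t$ traces out an embedded disc only in $M \times [0,1]$ (a concordance), not in $M$: during the crossing-change move the moving strand sweeps across $\alpha$ itself, so $\bigcup_t h_t(\alpha)$ is not an embedded surface. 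This is not a gap you can close with the Whitney trick; in a 3-manifold the self-intersection set of a generic immersed disc is a 1-manifold, the ``Whitney discs'' you would need are 2-dimensional objects of the same codimension as the disc being repaired, and no light-bulb sign change alters the underlying obstruction, which is simply that $\alpha\cup\bar\beta$ can be a nontrivial knot. (A smaller point: since $\alpha$ and $\beta$ are embedded and disjoint except at endpoints, $f|_{\partial D^2}$ is an embedding and the double set already consists only of interior circles; there are no arcs to ``push off.'')

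The correct route, and the one the paper takes, is to work with a generic one-parameter family of arcs rather than a two-dimensional filling. A generic homotopy from $\alpha$ to $\beta$ is a path of arcs that is an embedding at all but finitely many times $t_0$, at each of which there is a single transverse double point. At such a moment one uses the sphere $\Gamma$ to realize the crossing change by an \emph{isotopy}: drag the offending strand along $\alpha$ toward $\Gamma$, pass it over $\Gamma$ (possible because $\Gamma$ is a sphere, so a meridian of $\alpha$ near its endpoint on $\Gamma$ bounds a disc in $M \setminus \alpha$), and return. Splicing this into the family near each $t_0$ upgrades the generic homotopy to an isotopy. Your identification of the meridian-bounding-a-disc property as the mechanism behind the light bulb trick is the right observation; you should deploy it at the level of crossing changes in a 1-parameter family of arcs, not in an attempt to desingularize a spanning disc.
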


\noindent
\emph{Sketch of Proof:}
 The homotopy between $\alpha$ and $\beta$ can be realized by a family of arcs $\alpha_t$ so that $\alpha_t$ is embedded or has a single double point for each $t\in [0,1]$.  If $\alpha_{t_0}$ contains a double point, the curve $\alpha_{t_0}$ consists of three consecutive sub-arcs $[0,a]$, $[a,b]$, and $[b,1]$ so that without loss of generality $[0,a]$ connects to the two sphere $\Gamma$.  We can pull the arc $\alpha_{t_0}([b-\varepsilon,b+\varepsilon])$ transverse to $\alpha_{t_0}([0,a])$ along the arc $\alpha_{t_0}([0,a])$ and then pull it over the two sphere $\Gamma$, and then reverse the process.  We can then glue this deformation smoothly in the family $\alpha_t$ for $t$ near $t_0$ to obtain the desired isotopy.  The proof is illustrated in Figure \ref{fig:lightbulb}.

\qed



\begin{figure} 
   \centering	
	\includegraphics[scale=0.75]{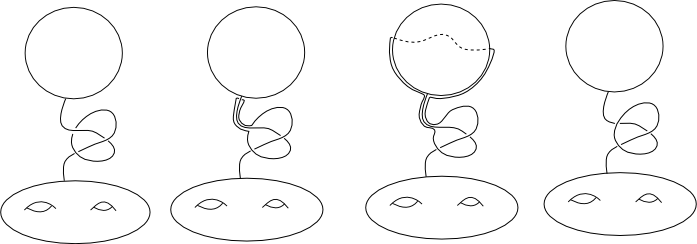}
	\caption{Changing the over-under crossing in the proof of the Light Bulb Theorem.}
	\label{fig:lightbulb}
\end{figure}

Let us now prove Proposition \ref{movehandles}:
\begin{proof}
Since $k>1$, we can find a cell $\Delta_k$ so that $\Gamma\cap\partial \Delta_k$ contains several small non-essential circles.  Let $C$ denote an innermost such circle.  By squeezing a small collar around $C$ we obtain a neck with two roots.  There are two connected components of $\Gamma\setminus C$.  Let us denote them $A$ and $B$.  There must be two consecutive sheets $S_1$ and $S_2$ comprising $\Sigma$ so that $S_1$ is contained in $A$ and $S_2$ is contained in $B$.  Thus we can move the roots of the collar about $C$ using Lemma \ref{lem: sliding} so that they are stacked on top of each other, one in $S_1$ and the other in $S_2$.  

By Proposition \ref{general} we can isotope the neck to then be a vertical neck contained in a single cell.  Thus the number of essential components has gone down by at least $1$ in this cell.

\end{proof}

 \subsection{Proof of Proposition \ref{main_deformation}.}

First we apply Proposition \ref{canoncial form}
to deform $\G$ so that it is $(\delta,k)$-stacked.
We consider two cases.

Case 1. $\Si$ is not homeomorphic to a sphere.
By assumption we have that the genus of every connected component
of $\G$ is less or equal to the genus of $\Si$.
It follows that each connected component of $\G$ either coincides
with a graphical sheet over $\Si$ (as more than one graphical sheet would imply that the genus of $\G$ is greater than that of $\Si$)
or is contained in a small ball of radius less than $\delta$.

Case 2. $\Si$ is homeomorphic to a sphere.
Suppose $\G$ has a connected component which
intersects more than one sheet $D_i$.
We apply Proposition \ref{movehandles}
reducing the essential multiplicity
of $\G$ in some cell $\Delta_i$.
Then we can apply Lemma \ref{stacking}
to reduce the area of the surface
by at least $c/2$. We iterate this procedure.
Eventually, every connected component
of $\G$ will either be graphical or
contained in a small ball.
This finishes the proof of 
Proposition \ref{main_deformation}.

  \section{Convergence to a surface $\Si$ with multiple connected
components}\label{multiplecomponents}
In this section, we generalize Proposition \ref{main_deformation}
to the situation where $\Si$ is disconnected and the area of
$\G$ outside of $N_h(\Si)$ is small. 

Let $\Si$ be an orientable connected surface
and let $p: N_h(\Si) \rightarrow \Si$
be the projection map.
Given a positive integer $m$ we will say that 
 a surface $\G$ has $\varepsilon$-multiplicity 
 $m$ if there exists a subset $U \subset \Si$
with $Area(U) < \varepsilon$ and for almost every
$x \in \Si \setminus U$ the set $\{ p^{-1}(x) \cap \G \}$
has exactly $m$ points.
Similarly, we will say that a surface $\G$ has $\varepsilon$-even 
(resp. $\varepsilon$-odd) multiplicity in $N_h(\Si)$
if there exists a subset $U \subset \Si$
with $Area(U) < \varepsilon$ and for almost every
$x \in \Si \setminus U$ the set $\{ p^{-1}(x) \cap \G \}$
has an even (resp. odd) number of points.

It is straightforward to check that
for all sufficiently small $h> 0$ and $\varepsilon>0$,
if $L(\G \cap \partial N_h (\Si)) \leq \frac{1}{100} \sqrt{\varepsilon}$,
then $\G$ is either $\varepsilon$-even or $\varepsilon$-odd in 
$N_h(\Si)$.

\begin{prop} \label{prop: multiple components}
Let $\Si = \sqcup \Si_k$, where each $\Si_k$
is a smooth strictly stable two-sided
connected minimal surface.

There exists $h_0, \varepsilon_0 >0$,
such that for all $h \in (0, h_0)$ and
$\varepsilon \in (0, \varepsilon_0)$ the following
holds. 

Suppose $\G$ satisfies 

(a) $Area (\G \setminus N_{h}(\Si))< \varepsilon$;

(b) $genus(N_h(\Si_i) \cap \G) \leq genus(\Si_i)$ for each $i$;

(c) $L(\G \cap \partial N_h (\Si)) \leq \frac{1}{100} \sqrt{\varepsilon}$.







Let $\{\Si_{k_j}\}$ denote the subset of minimal surfaces
for which $\G$ is $\varepsilon$-odd in $N_h(\Si_{k_j})$.
Then for every $\delta>0$ there exists an isotopy $\G_t$, such that:

(i) $Area(\G_t) < Area(\G) + \delta$

(ii) $\G_1$ is the union of $\{\Si_{k_j}\}$
connected 
by thin necks.

(iii) If $\{\Si_{k_j}\}$ is empty then $\G_1$
can be chosen to be a closed surface in a ball
of arbitrarily small radius.
\end{prop}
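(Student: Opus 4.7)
The plan is to reduce the disconnected-stable case to the connected case already handled by Proposition \ref{main_deformation}, while using the cable technology of Section \ref{sec:cables} to keep the global surface continuously isotoped. First I would partition $\G$ into a ``thick'' part $\G \cap N_h(\Si)$ and a ``thin'' part $\G \setminus N_h(\Si)$. By hypothesis (a) and the coarea-type bound in (c), each connected component of the thin part is either a closed surface of diameter $O(\sqrt{\varepsilon})$ or a topological annulus/pair-of-pants joining boundary circles of $\partial N_h(\Sigma_i)$'s; after a preliminary isotopy (applying Lemma \ref{lem: tree to necks} and Lemma \ref{neck squeeze}) costing less than $\delta/10$ in area, I may assume the thin part consists of disjoint small closed components sitting in balls of radius $<\sqrt{\delta}/10$ together with finitely many thin $\varepsilon'$-tubes whose two root balls sit on $\partial N_h(\Si_i)$'s.

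Next I treat each $\Si_i$ in turn. Inside $N_h(\Si_i)$ I conceptually cap off the thin-neck boundary circles of $\G \cap N_h(\Si_i)$ by small, nearly flat, mutually disjoint discs to obtain a closed (possibly disconnected) surface $\tilde{\G}_i \subset N_h(\Si_i)$. Hypothesis (b) guarantees each component of $\tilde{\G}_i$ has genus at most that of $\Si_i$, so Proposition \ref{main_deformation} applies and produces an isotopy $\{\tilde{\G}_{i,t}\}$ taking $\tilde{\G}_i$ to a union of graphical copies of $\Si_i$ together with pieces contained in small balls, with area increase at most $\delta/(10k)$. Because we are not allowed to perform the disc surgery for real, I replace each conceptual cap by the root ball of the thin neck it truncates, and run the isotopy on $\G$ by simultaneously cable-sliding (Lemma \ref{lem: sliding}) the corresponding root along with the moving cap. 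This maintains continuity of the global isotopy $\{\G_t\}$, and by choosing the tube thickness sufficiently small the extra area cost from cable sliding is $o(\delta)$. After executing this for every $i$, $\G$ has been isotoped to a surface whose intersection with each $N_h(\Si_i)$ consists of $m_i$ nearly flat parallel sheets plus a small-ball piece, all tied together across $M$ by thin necks.

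Now I run a global iterative procedure to drive down every $m_i$ to $\{0,1\}$. Fix $i$ with $m_i \geq 2$. Two consecutive graphical sheets at $\Si_i$ bound a product slab, and by following any thin neck emerging out of this slab one obtains a core arc $\alpha$ whose endpoints lie on two sheets of $\G$, with at least one endpoint reachable from a sphere component of the complementary region (coming from the small-ball pieces created above, or from a slab already identified with $\Si_i \times [0,1]$ with $\Si_i \simeq \mathbb{S}^2$; in the higher genus cases $m_i$ is automatically $\leq 1$ by the genus bound, so only $\Si_i \simeq \mathbb{S}^2$ needs this move). The Generalized Light Bulb Theorem \ref{general} then produces a free homotopy, hence a free isotopy, taking $\alpha$ to a vertical arc supported in a single cell with its two root balls stacked between two adjacent sheets; via Lemma \ref{lem: sliding} this isotopy lifts to one of $\G$ with arbitrarily small area cost. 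Once this neck is vertical, Proposition \ref{canoncial form} and Lemma \ref{stacking} apply to open the neck, with a genuine area drop of at least $c/2$, and thus reduce $m_i$ by $2$. The parity $m_i \bmod 2$ is preserved throughout and equals the $\varepsilon$-odd/$\varepsilon$-even label near $\Si_i$ by definition.

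The main obstacle is the combinatorial/topological bookkeeping in this third step: arranging at every stage to find a thin neck with one endpoint on a sphere component of the ambient complement so that Proposition \ref{general} is applicable, and ensuring the cable slides move through a region where their area contribution stays inside the $\delta$-budget. When $\{\Si_{k_j}\}$ is nonempty the iteration terminates with $m_i = 1$ exactly on the $\Si_{k_j}$ and $m_i = 0$ otherwise, the thin part at that point being a collection of thin necks joining the $\Si_{k_j}$'s; this gives conclusion (ii). When $\{\Si_{k_j}\}$ is empty the iteration terminates with all $m_i = 0$, leaving only small-ball closed components joined by thin tubes in $M \setminus N_h(\Si)$; a final application of Lemma \ref{lem: tree to necks} followed by contracting the resulting tree of tubes via Lemma \ref{neck squeeze} and Lemma \ref{lem: sliding} collects everything into a single ball of arbitrarily small radius, giving conclusion (iii). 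The total area increase across all stages is bounded by a geometric series of the form $\sum_j \delta/2^j < \delta$, verifying (i).
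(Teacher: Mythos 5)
Your overall strategy matches the paper's: bring each $N_h(\Si_i)\cap\G$ into stacked form via Proposition \ref{main_deformation}, keep the global surface continuous by cable-sliding, and then use the Generalized Light Bulb Theorem \ref{general} together with the parity argument to reduce each multiplicity $m_i$ to $0$ or $1$. However, there is a genuine gap at the very first step.

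You assert that hypotheses (a) and (c) alone, together with Lemma \ref{lem: tree to necks} and Lemma \ref{neck squeeze}, let you deform the thin part $\G\setminus N_h(\Si)$ into small-ball pieces and thin tubes at cost $<\delta/10$. This does not follow. Hypothesis (a) only gives $Area(\G\setminus N_h(\Si))<\varepsilon$, and $\delta$ is allowed to be \emph{much} smaller than $\varepsilon$ (indeed the conclusion must hold for every $\delta>0$ with $\varepsilon$ fixed in $(0,\varepsilon_0)$). Hypothesis (c) bounds boundary length but places no restriction on the area or the topology of $\G\setminus N_h(\Si)$, which a priori can be a surface of area comparable to $\varepsilon$ with many handles. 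Moreover, Lemma \ref{lem: tree to necks} applies to a surface already in $(\delta,m)$-stacked canonical form and Lemma \ref{neck squeeze} to a surface already organized into a cable; neither can be invoked directly on the raw thin part. What is actually needed here is Lemma \ref{contracting_hair}, the pull-tight argument: one uses Almgren-style tightening supported outside $N_h(\Si)$, together with the monotonicity formula (which rules out stationary varifolds having small mass outside but nonzero mass at distance $\geq 2h$), to drive $Area(\G\setminus N_{2h}(\Si))$ below any prescribed $\delta$ with no area increase. This is the crucial preliminary that you skip; the paper performs it first, and only afterwards is the thin part small enough to be combed into tubes and processed with the cable lemmas.

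A smaller but related omission: in your iteration step you pick ``any thin neck emerging out of the slab'' to produce the arc $\alpha$. The paper is more careful --- it chooses an \emph{innermost} boundary circle $\gamma\subset\partial N_h(\Si_i)\cap\G$ and invokes part (6) of Lemma \ref{canonical multiple} to guarantee that exactly one sheet lies on one side of $\gamma$. Without the innermost choice and property (6), you do not know that the two endpoints of $\alpha$ land on \emph{adjacent} sheets, and the Light Bulb move may reduce $m_i$ by $0$ rather than $2$. The rest of your argument (capping discs being replaced by the moving roots, the parity preservation, and the terminal case analysis for (ii) and (iii)) is in line with the paper's proof.
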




Note that assumption (b) is satisfied whenever
$\G$ is a strongly irreducible Heegaard splitting.

Proposition \ref{main_multiple} follows immediately from
Proposition \ref{prop: multiple components}.

%
%
%

\subsection{Reducing the area outside of the tubular
neighbourhood of $\Si$}

The following lemma is useful for reducing the area of thin hair (see \cite[Lemma 7.1]{Mo} for an analogue lemma in the context of Almgren-Pitts theory). 
\begin{lemma}[Reducing the area of thin hair]
 \label{contracting_hair}
Let $S$ be a surface in $M$
then for all sufficiently small 
$h>0$ the following holds.
There exists $\varepsilon(M, S, h)>0$ with the following
property. 
For every $\delta>0$ and every surface $\Si$ with 
$Area(\Si \setminus N_h(S))< \varepsilon$ their exists
a smooth isotopy $\Si_t$ with

(1) $\Si_0 =\Si$, 

(2) $Area(\Si_t) \leq Area(\Si)+ \delta$,

(3) $Area(\Si_1 \setminus N_{2h}(S))< \delta$,

\end{lemma}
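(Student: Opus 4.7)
The proof strategy is to run an Almgren-type area-nonincreasing pull-tight procedure supported in the open set $U := M \setminus \overline{N_h(\Si)}$, and then combine the resulting varifold convergence with the monotonicity formula to conclude that the limit has support contained in $\overline{N_{3h/2}(\Si)} \subset N_{2h}(\Si)$. The smallness of $\varepsilon$ will be forced by the monotonicity constant of the ambient metric, and is independent of $\delta$.

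First, let $c = c(M) > 0$ be a monotonicity constant so that any stationary integral $2$-varifold $V$ in an open region $U' \subset M$ satisfies $\|V\|(B_r(p)) \geq c r^2$ whenever $p \in \mathrm{supp}(V)$, $r$ is smaller than a fixed constant $r_0(M)$, and $B_r(p) \subset U'$. I set $\varepsilon := \tfrac{1}{2} c(h/2)^2$, assuming $h$ is small enough that $h/2 < r_0(M)$ and that $N_{2h}(\Si)$ is a genuine diffeomorphic tubular neighborhood of $\Si$.

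Next, I construct a piecewise-smooth isotopy $\{\G_t\}_{t \in [0,1)}$ by iterating compactly supported vector fields in $U$ that decrease area by close to the supremum of such decreases subject to a uniform $C^1$-bound, in the style of the Pitts, Colding--De Lellis, and Marques--Neves pull-tight constructions. Because the deformation is supported away from $\overline{N_h(\Si)}$, the mass of $\G_t$ inside $\overline{N_h(\Si)}$ is constantly equal to $\|\G\|(\overline{N_h(\Si)})$, and the total area is non-increasing, so $\|\G_t\|(U) \leq \|\G\|(U) < \varepsilon$ throughout. Any varifold subsequential limit $V$ of $\G_{t_i}$, for $t_i \to 1$, is then stationary inside $U$ and satisfies $\|V\|(U) \leq \varepsilon$.

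The monotonicity formula now applies: if there were $p \in \mathrm{supp}(V)$ with $d(p, \Si) > 3h/2$, then $B_{h/2}(p)$ would be compactly contained in $U$, giving $\|V\|(B_{h/2}(p)) \geq c(h/2)^2 = 2\varepsilon$, contradicting $\|V\|(U) < \varepsilon$. Hence $\mathrm{supp}(V) \subset \overline{N_{3h/2}(\Si)}$, so the closed set $M \setminus N_{2h}(\Si)$ has $\|V\|$-measure zero. By upper semicontinuity of varifold mass on closed sets, $\mathrm{Area}(\G_{t_i} \setminus N_{2h}(\Si)) \to 0$, so for $i$ sufficiently large I set $\G_1 := \G_{t_i}$, smooth the finitely many concatenation points, and reparametrize to $[0,1]$; property (2) is preserved because the area is non-increasing throughout. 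The main technical point is running the pull-tight in an open region rather than on a closed manifold, so that stationarity of the limit is only interior stationarity in $U$; this is exactly what the monotonicity argument requires, and the construction of the vector fields proceeds as in the usual closed setting once test fields are restricted to those compactly supported in $U$.
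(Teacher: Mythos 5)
Your proposal follows the same strategy as the paper's proof: run an Almgren/Colding--De Lellis pull-tight flow supported in $M \setminus \overline{N_h(\Sigma)}$ and use the monotonicity formula to force any stationary limit to have support within $N_{2h}(\Sigma)$. The paper makes the stationarity of the limit quantitative by building a continuous area-decreasing vector field on the compact set $\{V : \|V\|(M_h)\leq\varepsilon,\ \|V\|(\overline{M}_{2h})\geq\delta\}$ (which, by monotonicity, is at positive $\mathbf{F}$-distance from the stationary varifolds), so that the flow explicitly drives $\Gamma$ out of this set; this is exactly the mechanism you are implicitly invoking when you assert that the subsequential limit of $\Gamma_{t_i}$ is stationary in $U$, so the two arguments are essentially identical.
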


\begin{proof}

We recall the following ``bounded path" 
version of the $\gamma$-reduction of \cite{MSY} 
used in the min-max setting of Simon-Smith  (see \cite[Section 7]{cd}). 
Let $\Sigma$ be an embedded surface in $M$, and $U$ be an open set included in $M$. Let $\mathfrak{Is}(U)$ be the set of isotopies of $M $ fixing $M\backslash U$, with parameter in $[0,1]$. For $\delta>0$ define
$$\mathfrak{Is}_{\delta}(U) = \{\psi\in\mathfrak{Is}(U); \mathcal{H}^2(\psi(\tau,\Sigma)) 
\leq \mathcal{H}^2(\tilde{\Sigma}) + \delta \text{ for all } \tau \in[0,1]\}.$$
An element of the above set is called a $\delta$-isotopy. Suppose that the sequence $\{\psi^k\}\subset \mathfrak{Is}_{\delta}(U)$ is such that 
$$\lim_{k\to\infty}\mathcal{H}^2(\psi^k(1,\Sigma)) = \inf_{\psi\in\mathfrak{Is}_{\delta}(U)}\mathcal{H}^2(\psi(1,\Sigma)).$$
Such a sequence is called minimizing. Then in $U$, $\psi^k(1,\Sigma)$ subsequently converges in the varifold sense to a smooth minimal surface $\hat{\Sigma}$.

Let us apply this $\gamma$-reduction with constraint to $U:=M\backslash \bar{N_{h}}(S)$. Let $\{\psi^k\}\subset \mathfrak{Is}_{\delta}(U)$ be a minimizing sequence. Then by the monotonicity formula for minimal surfaces, the area of $(M\backslash {N_{2h}}(S))\cap \psi^k(1,\Sigma)$ goes to zero so in particular for a $k'$ large enough, this area is smaller than $\delta$. The lemma is proved by taking $\Sigma_t = \psi^{k'}(t,\Sigma)$.

\end{proof}



%
%


The following lemma allows us to isotopically push 
discs into the neighbourhood of a surface
with almost no increase in the area.

\begin{lemma} \label{contracting_disc}
There exists $\varepsilon(M, \Si, h)>0$ with the following
property. 
Suppose $D \subset M \setminus N_{h}(\Si)$ 
is an embedded disc with 
$Area(D)< \varepsilon$, $l(\partial D) < \sqrt{\varepsilon}$ and $\partial D \subset \partial  N_{h}(\Si) $ 
There exists
a smooth isotopy $D_t$ with

(1) $D_0 =\G$, 

(2) $Area(D_t) \leq Area(D) + \delta$,

(3) $D_1 \subset  N_{2h}(\Si)$.
\end{lemma}

\begin{proof}
$\varepsilon < \frac{1}{100} r^2$.
Fix a triangulation of $M \setminus N_{h}(\Si)$ with each 3-simplex
$\Delta_j$ contained in a ball of radius $r/2$ centered at the barycenter of the simplex $p_j$.

Let $C(2)$ denote the union of the interiors
of 2-simplices in the triangulation.
After an initial deformation we may assume that
$D$ intersects $C(2)$ in a collection of small circles
and does not intersect the 1-skeleton or 0-skeleton
of the triangulation.
Indeed, by coarea formula we can find a radius $r_j \in (r,2r)$,
so that $\partial B_{r_j}(p_j) \cap D$ is a finite union of circles of total length less than $ 2 \frac{\epsilon}{r}$.
Using blow down - blow up  Lemma \ref{radial} we can deform $D$ to $D'$ so that 
$D'$ intersects each $2$-face of $Delta_j$ in a collection of circles
and does not intersect $1$-skeleton or $0$-skeleton
of $Delta_j$, and so that the area of $D'$ is
less than or equal to the area of $D$.
We perform a similar deformation for every simplex in
the triangulation.

We claim that there exists a 3-simplex $\Delta'$, so that
$\Delta' \cap D$ contains a connected component $D'$ diffeomorphic to a disc.
Given this claim we retract $D$ by inductively reducing the number
of connected components of $D \cap C(2)$.
Namely, we push out the disc outside of $\Delta'$ together with 
all connected components $D \cap \Delta'$ contained between
$D'$ and the $2$-simplex of $\Delta'$ that contains $\partial D'$.

To prove the claim, consider a tree where each vertex
corresponds to a 
connected component of $D \setminus C(2)$, and two vertices are connected 
by an edge if the corresponding connected components have a common boundary circle.
A terminal vertex of the tree must correspond to a disc.
\end{proof}

\begin{defi}
Let $\Si$ be connected and suppose
$\G \cap \partial N_h(\Si)$ is a collection
of small disjoint circles.
Let $C(\G)$ denote a closed surface in $N_h(\Si)$ obtained 
from $\G \cap N_h(\Si)$ by
capping each connected component of $\G \cap \partial N_h(\Si)$
with a small disc and perturbing to remove self-intersections.
\end{defi}

\begin{lemma}[Continuous cutting and gluing in the tubular neighborhood]\label{cutting and reattaching hair}

For every sufficiently small $\delta>0$
the following holds. Suppose $L(\G \cap \partial N_h(\Si)) \leq \delta$
and let $\{\G_t\}$ be an isotopy of closed surfaces with 
$\G_0 = C(\G)$. Then there exists an isotopy $\{\G'_t \}$, such that

(a) $\G'_0 = \G$;

(b) $\G'_t \setminus N_h(\Si) = \G \setminus N_h(\Si)$;

(c) $(\G'_t \cap N_h(\Si)) \setminus \G_t$ consists of 
a union of disjoint necks of total area $O(\delta^2)$.
\end{lemma}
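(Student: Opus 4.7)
The plan is to extend the given isotopy $\{\G_t\}$ to an ambient isotopy $\phi_t$ of a neighbourhood of $C(\G)$ (via the isotopy extension theorem) so that $\G_t = \phi_t(C(\G))$, and then, for each boundary circle $c_i \subset \G \cap \partial N_h(\Si)$ with corresponding cap $D_i \subset C(\G)$, replace the evolving image $\phi_t(D_i)$ by a thin neck that runs from a shrunken sub-disc inside $\phi_t(D_i)$ back to the original $c_i$ along which the hair $\G \setminus N_h(\Si)$ was attached. Intuitively, the hair is transported along with the isotopy of the interior via long, very thin cords.

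Concretely, fix small constants $\varepsilon_i \ll L(c_i)$ and choose a continuous family of subdiscs $E_i(t) \subset D_i$ with $E_i(0)=D_i$ shrinking smoothly (with vanishing derivative at $t=0$) down to a tiny disc of boundary length $\varepsilon_i$; set $\tilde D_i(t) = \phi_t(E_i(t)) \subset \G_t$. For each $i$ and $t$ select an embedded arc $\gamma_i(t) \subset N_h(\Si)$ from a marked point on $c_i$ to a point on $\partial \tilde D_i(t)$, varying continuously in $t$ with $\gamma_i(0)$ trivial, and (by a generic perturbation) with interior disjoint from $\G_t$ and from the other $\gamma_j(t)$. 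Build a neck $N_i(t)$ with boundary circles $c_i$ and $\partial \tilde D_i(t)$ whose geometry near $c_i$ is a shallow cone interpolating from circumference $L(c_i)$ down to $\varepsilon_i$ over a length of order $L(c_i)$, followed by a thin tube of circumference $\varepsilon_i$ running along $\gamma_i(t)$. Finally, set
\[
\G'_t = (\G \setminus N_h(\Si)) \;\cup\; \bigcup_i N_i(t) \;\cup\; \Bigl(\G_t \setminus \bigcup_i \tilde D_i(t)\Bigr),
\]
glued along $c_i$ and $\partial \tilde D_i(t)$.

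To verify the three conclusions: at $t=0$ the arcs and necks are degenerate and $\tilde D_i(0)=D_i$, so $\G_0 \setminus \bigcup_i D_i = C(\G)\setminus \bigcup_i D_i = \G \cap N_h(\Si)$ and hence $\G'_0 = \G$, giving (a); (b) is immediate from the construction; and up to measure zero $(\G'_t \cap N_h(\Si)) \setminus \G_t = \bigcup_i N_i(t)$, whose total area is at most $C \sum_i L(c_i)^2 + C \sum_i \varepsilon_i\,\mathrm{length}(\gamma_i(t))$. The first sum satisfies $\sum_i L(c_i)^2 \leq (\max_i L(c_i))(\sum_i L(c_i)) \leq \delta^2$, and the second is made $\leq \delta^2$ by choosing each $\varepsilon_i$ small enough, proving (c).

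The main obstacle is the smoothness of the construction in $t$, especially at $t=0$ where the neck must continuously open up out of the trivial identification of the hair with $\G \cap N_h(\Si)$; this is handled by arranging $E_i(t)$ to shrink with vanishing derivative at $t=0$ and by using the blow down--blow up trick of Lemma \ref{patching in the ball} to model the opening neck locally near $c_i$. Checking that the $\gamma_i(t)$ can be chosen embedded, pairwise disjoint, and with interior avoiding $\G_t$ follows by parametric transversality, since a $1$-parameter family of arcs in a $3$-manifold generically avoids a $2$-surface off of prescribed endpoints.
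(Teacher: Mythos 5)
Your overall strategy is the same as the paper's: for each boundary circle of $\G \cap \partial N_h(\Si)$, connect the fixed hair on $\partial N_h(\Si)$ to a shrunken subdisc of the moving cap by an arc, tube around the arc with a very thin neck, and glue these necks into the isotopy so that (a)--(c) hold; the area bound comes from making the tube radii tiny and using $\sum L(c_i)\leq \delta$. The paper does exactly this (see its choice of $D_1\subset C(\G)$, $D_2\subset \partial N_h(\Si)$, the family of arcs $\alpha_t$, and the final gluing in of a small tube around $\alpha_t$, iterated over the boundary circles), so there is no genuinely different route here; the shrinking-subdisc $E_i(t)$ and the explicit use of the isotopy extension theorem are cosmetic reformulations.

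One step in your writeup needs to be flagged, because as stated the justification is incorrect even though the conclusion is true. You claim that the existence of a continuous family of embedded arcs $\gamma_i(t)$ with interiors disjoint from $\G_t$ ``follows by parametric transversality, since a $1$-parameter family of arcs in a $3$-manifold generically avoids a $2$-surface off of prescribed endpoints.'' This dimension count is wrong: for each fixed $t$ the arc is $1$-dimensional and $\G_t$ is $2$-dimensional inside the $3$-dimensional $N_h(\Si)$, so the generic intersection is $0$-dimensional (a finite set of points), and adding the $t$-parameter makes the generic intersection locus $1$-dimensional. Disjointness is therefore a \emph{non-generic} condition, and its existence is a topological statement, not a transversality one: you need to know that one endpoint of the arc lies on $\partial N_h(\Si)$ while the other approaches $\G_t$ from the corresponding side, and that the relevant complementary region remains path-connected throughout the isotopy, so that the whole one-parameter family of arcs can be chosen to avoid $\G_t$. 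This is precisely what the paper packages by invoking Cerf's theorem (and an inductive choice of an outermost boundary circle). Your construction is otherwise fine, and the area estimate is correct, but the appeal to parametric transversality should be replaced by the topological argument.
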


\begin{proof}
Let $\gamma_1$ be an outermost connected component
of $\G \cap \partial N_h(\Si)$. Let $D_1$ and $D_2$ denote the two discs
in $C(\G)$ and $\partial N_h(\Si)$ respectively, corresponding to 
the surgery along $\gamma_1$. Let $\alpha_t$ be an embdedded arc with 
endpoints $p$ in $D_1$ and $q$ in $D_2$.
It is a consequence of standard topological theorems
(in particular, Cerf's theorem) 
that there exists an isotopy of embedded arcs $\alpha_t$,
which does not intersect $\G_t$, except at the endpoint $p_t \in \G_t$,
with $\alpha_0 = \alpha$ and the other endpoint equal to $q \in D_2 \subset \partial N_h(\Si)$.

If follows by compactness that a sufficiently small tube around $\alpha_t$
will be disjoint from $\G_t$ except at the root.
We glue in this family of necks to obtain a new isotopy of surfaces.
We proceed by induction on the number of connected components
of $\G \cap \partial N_h(\Si)$.
\end{proof}

We need one more lemma before we can
prove Proposition \ref{prop: multiple components}.

\begin{lemma}[Canonical form
in the presence of multiple connected components] \label{canonical multiple}

Suppose $\G$ satisfies the assumptions of
Proposition \ref{prop: multiple components}. 
Then the conclusions of Lemma \ref{contracting_hair}
hold and, moreover, we may also assume that 



(4) for each $i$, there exists an integer 
$m_i$ such that $\G_1$ has $\varepsilon$-multiplicity
$m_i$;

(5) if $genus(\Si_i) \geq 1$ then 
$m_i = 0$ or $1$;


(6) if $\G' \subset N_h(\Si_i)$
is a connected component of 
$\G_1 \cap  N_h(\Si_i)$, such that
$m_i>1$ and $\gamma \subset \partial \G'$
is an inner most circle in $\partial N_h(\Si_i)$,
then $\partial \G' = \gamma$.
\end{lemma}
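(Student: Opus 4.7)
The plan is to deform $\G$ in three stages. First, I would apply Lemma \ref{contracting_hair} with a sufficiently small target to produce an area-nonincreasing isotopy that shrinks the area of $\G$ outside $N_{2h}(\Si)$ to be arbitrarily small. A coarea argument on the distance function to $\Si$ in the annular region $N_{2h}(\Si) \setminus N_h(\Si)$ then lets me choose a slightly larger radius $h' \in (h,2h)$ so that $L(\G \cap \partial N_{h'}(\Si))$ is also small; after relabeling, $\G \cap \partial N_h(\Si_i)$ is a disjoint union of tiny circles for each $i$. This secures conclusions (1)--(3) of Lemma \ref{contracting_hair} and places us in the hypothesis of Lemma \ref{cutting and reattaching hair}.

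Second, for each connected component $\Si_i$ I would cap off the small boundary circles to obtain the closed surface $C_i := C(\G \cap N_h(\Si_i))$. Assumption (b) bounds the genus of $C_i$ by that of $\Si_i$, so Proposition \ref{main_deformation} applies inside $N_h(\Si_i)$ and deforms $C_i$, via an isotopy raising area by at most $\delta/(10k)$, into a disjoint union of graphical sheets over $\Si_i$ and closed surfaces contained in balls of radius less than $\delta$. Lemma \ref{cutting and reattaching hair} then lifts this isotopy to an isotopy of $\G$ itself, keeping $\G \setminus N_h(\Si)$ fixed while the thin necks follow their root disks on $\partial N_h(\Si_i)$. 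Performing this for each $i$ establishes conclusion (4), with $m_i$ equal to the number of graphical sheets produced over $\Si_i$. Conclusion (5) is then immediate from a genus count: two disjoint graphical sheets over $\Si_i$ would contribute total genus $2\,\mathrm{genus}(\Si_i) > \mathrm{genus}(\Si_i)$ whenever the latter is at least $1$, contradicting assumption (b).

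Third, for conclusion (6) I would iterate a root-sliding clean-up. I first apply Lemma \ref{lem: tree to necks} to arrange that the thin part of $\G$ is a disjoint union of annular tubes, then handle innermost circles one at a time. Given an innermost circle $\gamma \subset \partial N_h(\Si_i)$, bounding a disk $D \subset \partial N_h(\Si_i)$ containing no other boundary circles of $\G$, the neck emerging from $\gamma$ can be treated as a cable in the sense of Definition \ref{def:cable}. Using Lemma \ref{neck squeeze} to pre-squeeze this cable so its thickness is negligible, and then invoking Lemma \ref{lem: sliding} --- with the generalized Light Bulb Theorem (Proposition \ref{general}) underwriting the freedom to isotope the core arc of the cable through the ambient handlebody --- I can slide the root off any graphical sheet it may be attached to and onto a small blob component (or, creating a fresh blob if none is available, by pinching off a small disk from a neighboring sheet at negligible cost). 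After iterating over the finitely many innermost circles, every component $\G' \subset \G_1 \cap N_h(\Si_i)$ containing an innermost boundary circle $\gamma$ has $\gamma$ as its only boundary.

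The main obstacle will be the bookkeeping in the third stage: the sliding operations must be ordered so that earlier ones are not undone by later ones, and all cable thicknesses chosen small enough that the cumulative area increase stays within the budget $\delta$. Lemma \ref{neck squeeze} is what makes this viable, since it allows each individual sliding to contribute arbitrarily little area, and a geometric-series choice of tolerances $\delta_j = 2^{-j}\delta/100$ applied successively to the innermost circles keeps the total cost under $\delta$.
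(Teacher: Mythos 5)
Your stages one and two (establishing conclusions (1)--(3) from Lemma \ref{contracting_hair}, then (4) and (5) via capping, Proposition \ref{main_deformation} inside each $N_h(\Si_i)$, and Lemma \ref{cutting and reattaching hair} to reattach the necks, with the genus count for (5)) match the paper's argument and are correct.

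Your stage three, however, has a genuine gap. You propose to slide the root of the $\gamma$-tube \emph{off} the graphical sheet $\G'$ and onto a small blob component. First, this is the wrong tube: condition (6) asks that the component $\G'$ whose boundary contains the innermost circle $\gamma$ have $\partial \G' = \gamma$, i.e.\ that $\G'$ \emph{retain} $\gamma$ and lose its other boundary circles. This matters because in the proof of Proposition \ref{prop: multiple components}, cutting $\G$ along $\gamma$ is supposed to isolate exactly one graphical sheet on the $A$ side, which the subsequent root-stacking and Light-Bulb step then merges with a sheet on the $B$ side, reducing the sheet count by two. If instead $\gamma$ is attached to a blob, the $A$ side contains a blob and no sheet, and that reduction step has nothing to act on. Second, your fallback ``creating a fresh blob by pinching off a small disk from a neighboring sheet'' is not an isotopy --- a genuine pinch-off is a surgery; the isotopic version (a bubble attached by a tiny neck) leaves the bubble in the same connected component as the sheet, so $\partial \G'$ still contains the other circles and (6) fails. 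Third, even when a genuine blob is available, removing $\gamma$ from $\G'$ may promote one of $\G'$'s remaining circles to innermost status, forcing an iteration whose blob supply you have no control over. The paper's approach is the reverse: keep the $\gamma$-tube where it is and use Lemma \ref{lem: sliding} to slide the \emph{other} tubes attached to $\G'$ onto adjacent sheets (which exist because $m_i > 1$), so that $\G'$ is left with only $\gamma$. This avoids any need for auxiliary blobs.
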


\begin{figure} 
   \centering	
	\includegraphics[scale=0.8]{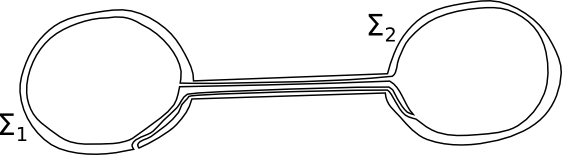}
	\caption{Surface $\G$ is within $\varepsilon$
	(in varifold norm) from $2\Si_1 +2 \Si_2$, where $\Si_1$ and 
	$\Si_2$ are stable minimal spheres. We can isotop
	$\G$ to a point while increasing its area by an arbitrarily small amount. This example illustrates that we will need to keep track of the surface outside of the tubular neighborhood to find a neck to open.  }
	\label{fig:dumbbell}
\end{figure}

\begin{proof}


By applying 
Proposition \ref{main_deformation} together with 
Lemma \ref{cutting and reattaching hair}
we may assume that in the tubular neighborhood
of each connected component $\Si_i$
surface $\G \cap N_h(\Si_i)$ looks like $m_i$ disjoint copies of
$\Si_i$ with thin necks that go into the boundary $\partial N_h(\Si_i)$.
This proves (4).
By assumption 
(b) of Proposition \ref{prop: multiple components} 
if $\Si_i$ has genus greater than or equal
to $1$ then $m_i = 0$ or $1$. This proves (5).


Finally, we can apply Lemma \ref{lem: sliding}
to slide tubes attaching to an inner most sphere,
so that condition (6) is satisfied.
\end{proof}

\subsection{Proof of Proposition \ref{prop: multiple components}.}
Assume that the part of the surface outside of small tubular neighborhood
of $\Si$ satisfies conclusions (1)-(6) of Lemmas \ref{contracting_hair} 
and \ref{canonical multiple}. 

By Proposition \ref{main_deformation} and 
Lemma \ref{cutting and reattaching hair}
we may assume that in the tubular neighborhood
of each connected component $\Si_i$
surface $\G \cap N_h(\Si_i)$ looks like $k$ disjoint copies of
$\Si_i$ with thin necks that go into the boundary $\partial N_h(\Si_i)$.
Also, if $\Si_i$ has genus greater than or equal to 
$1$ then $\G \cap N_h(\Si_i)$ is either empty or has one
connected component (which look like $\Si_i$ with necks 
escaping into the boundary of  $N_h(\Si_i)$).
Hence, we need to deal with multiple connected component only
in tubular neighbourhood of spherical components of
$\Si$.

Suppose $\G \cap N_h(\Si_i)$ has more than one
connected component.
Choose an inner most closed curve
$\gamma \subset \partial N_h(\Si_i) \cap \G$.
By squeezing a small collar that contains $\gamma$ we obtain a neck with two roots.   
There are two connected components $\Gamma \setminus \gamma$.  Let us denote them $A$ and $B$.  
Moreover, since we chose $\gamma$ to be innermost, by Lemma \ref{canonical multiple} (6),
exactly one sheet of $\G \cap N_h(\Si_i)$ is contained in $A$
and all other sheets are contained in $B$.
Hence, we can move the roots using Lemma \ref{lem: sliding} so that they are stacked on top of each other in the neighborhood of component $\Sigma_i$ and 
apply Proposition \ref{general}, so that in some cell on $\Sigma_i$, the number of essential components has gone down.  Thus we can reduce 
the number of sheets in $N_h(\Sigma_i)$ by at least two.   We proceed this way for as long as $\partial N_h(\Si_i) \cap \G$ consists of more than one connected component for any $i$.

Since the even/odd parity is preserved in this process
eventually we obtain that we have exactly one connected component
in the neighborhood for each $\Si_{k_j}$ where $\G$ had
$\varepsilon$-odd multiplicity.



\subsection{Gluing two families of isotopies
and interpolation.}

Proposition \ref{main_multiple} implies that we can deform 
the surface converging to a minimal surface with multiplicity
so that it has form
$\Si = \Si_{thin} \sqcup \Si_{thick}$, where
$\Si_{thick}$ is a disjoint union of embedded stable 
surfaces $\Si_i$ with $\delta$-size discs removed 
and $\Si_{thin}$ is contained in the boundary of 
a tubular neighbourhood of an embedded graph.

Given two isotopic surfaces with such a decomposition,
and assuming some additional topological conditions,
we would like to find an isotopy between them
that fixes $\Si_{thick}$.

\begin{prop} \label{joining two sides}
Suppose $\G_1$ and $\G_2$ are two isotopic embedded surfaces
in $M$ satisfying assumptions of Proposition \ref{prop: multiple components}
and, moreover, suppose the set of stable connected components
$\Si_i$, where $\G_j$, $j=1,2$, is $\epsilon$-odd coincides.
Assume, in addition, that $M = S^3$ or is a lens space
and $\G_j$ is a strongly irreducible 
Heegaard splitting of $M$ diffeomorphic to a torus or 2-sphere.

Then for every $\delta>0$ there exists an isotopy between  
$\G_1$ and $\G_2$ of area at most
$\max \{Area(\G_1), Area(\G_2) \}+\delta$.
\end{prop}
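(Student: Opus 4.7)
The plan is to apply Proposition \ref{prop: multiple components} separately to each $\G_j$ in order to put them into a common canonical form, and then connect the two canonical forms via an area-controlled isotopy using tube-sliding and the Generalized Light Bulb Theorem. First we would apply Proposition \ref{prop: multiple components} with parameter $\delta/4$ to each $\G_j$, producing isotopies of area cost at most $\delta/4$ ending at surfaces $\tilde{\G}_j$ consisting of the same union $\Si_{odd} := \bigcup_j \Si_{k_j}$ (using the hypothesis that the set of $\varepsilon$-odd components coincides for both) joined by thin necks. If $\Si_{odd} = \emptyset$, part (iii) of Proposition \ref{prop: multiple components} places both $\tilde{\G}_j$ in balls of arbitrarily small radius, and Alexander's theorem combined with the blow-down/blow-up trick of Lemma \ref{radial} produces the desired isotopy at negligible area cost.

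When $\Si_{odd} \neq \emptyset$, we next use Lemma \ref{neck squeeze} to shrink the thickness of the necks of both $\tilde{\G}_j$ to be arbitrarily small at arbitrarily small area cost, so that $\tilde{\G}_1$ and $\tilde{\G}_2$ agree on the thick part $\Si_{odd}$ and differ only in the embedded cable graphs $K_j \subset M \setminus \Si_{odd}$ whose thickened neighbourhoods form the thin necks. Since $\G_1$ and $\G_2$ are ambient isotopic, both are strongly irreducible Heegaard splittings of genus at most $1$ in $S^3$ or a lens space, and they share the same thick part, the uniqueness-up-to-isotopy results of Waldhausen (and Bonahon--Otal for lens spaces) imply that $K_1$ and $K_2$ are freely ambient isotopic rel $\Si_{odd}$ as embedded graphs whose endpoints are allowed to slide on $\Si_{odd}$.

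Then we would realize this isotopy of cable cores at the level of thin tubes by iteratively applying Lemma \ref{lem: sliding} to slide tube roots along the surface, and invoking the Generalized Light Bulb Theorem (Proposition \ref{general}) to undo any linking or knotting of cable cores. The spherical boundary needed for each Light Bulb move is available either as a spherical component of $\Si_{odd}$ (in the spherical Heegaard case, the genus bound \eqref{gb} forces every orientable component of $\Si_{odd}$ to be a sphere) or by introducing an auxiliary small round sphere bounding a ball inside a thick region along the cable. Concatenating $\G_1 \rightsquigarrow \tilde{\G}_1 \rightsquigarrow \tilde{\G}_2 \rightsquigarrow \G_2$ and choosing parameters appropriately yields an isotopy of total area at most $\max\{Area(\G_1), Area(\G_2)\} + \delta$. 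The hard part will be converting the given ambient isotopy of $\G_1$ and $\G_2$ into a free isotopy of cable cores rel $\Si_{odd}$ that is implementable concretely by tube moves; this is where strong irreducibility and the classification of low-genus Heegaard splittings of $S^3$ and lens spaces play an essential role, since without these restrictions the combinatorial configurations of cable cores attached to $\Si_{odd}$ could be far more intricate than the available sliding and light-bulb moves can untangle.
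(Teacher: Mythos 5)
Your overall plan — apply Proposition \ref{prop: multiple components} to both sides, then match cable-core configurations by tube sliding and the Generalized Light Bulb Theorem — tracks the paper's approach. The empty-$\Si_{\mathrm{odd}}$ reduction and the observation that spherical Heegaard surfaces force $\Si_{\mathrm{odd}}$ to be a union of spheres are both fine. But there is a genuine gap at the central step, which you yourself flag as ``the hard part'': you assert that $K_1$ and $K_2$ are freely ambient isotopic \emph{rel} $\Si_{\mathrm{odd}}$, and the argument offered for this does not establish it.

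First, the appeal to Waldhausen and Bonahon--Otal is both misplaced and non-sufficient. It is misplaced because the hypothesis already stipulates that $\G_1$ and $\G_2$ are isotopic in $M$, so uniqueness of Heegaard splittings of a given genus adds no new information. It is non-sufficient because an ambient isotopy carrying $\G_1$ to $\G_2$ need not preserve $\Si_{\mathrm{odd}}$ (nor the thick/thin decomposition you obtained from Proposition \ref{prop: multiple components}), so it cannot directly be read as an isotopy of the cable cores $K_1, K_2$ that fixes the thick sheets. Converting the given isotopy into a \emph{rel} $\Si_{\mathrm{odd}}$ isotopy of cores implementable by area-controlled moves is precisely the content that still needs a proof. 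The paper handles this directly without classification theorems: after the reduction, it case-splits on whether $\G_j$ is a sphere or a torus and, in each subcase, uses strong irreducibility to show that each spherical component $\Si_i$ bounds a ball, then applies Lemma \ref{lem: sliding} to arrange the spheres in a consecutive chain with any residual thin torus attached to the last one, and finally invokes Proposition \ref{general} repeatedly to untangle the thin necks/loops in the complement of the $\Si_i$. That chain of concrete steps is what produces the \emph{rel}-thick-part isotopy, and it is absent from your argument.

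Second, your suggestion of ``introducing an auxiliary small round sphere bounding a ball inside a thick region along the cable'' to supply the spherical boundary component needed for a Light Bulb move does not work: Proposition \ref{general} requires that one endpoint of the arc lie on a sphere component of $\partial M$ (of the manifold-with-boundary in which the isotopy takes place), i.e.\ a sphere that the arc actually attaches to, not an auxiliary sphere disjoint from the cable. In the paper's argument the sphere comes from a genuine spherical component of $\Si_{\mathrm{odd}}$ to which a root of the cable has been slid, not from an artificially inserted bubble. You would need to replace this device with an argument in the spirit of the paper's: slide one root of the offending neck onto a spherical sheet using Lemma \ref{lem: sliding}, then apply Proposition \ref{general} with that sheet as the relevant spherical boundary component.
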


\begin{proof}
By Proposition \ref{prop: multiple components},
$\G_j$ can be deformed into a union of stable
spheres connected by thin necks. 

Suppose first that $\G_j$ is a 2-sphere.
Then by results of sections 4 and 5
any two such configurations are isotopic 
with arbitrarily small area increase.

Suppose that $\G_j$ is a 2-torus.
If $\Si_i$ has a torus component,
then by applying Proposition \ref{general}
we can find an isotopy between thin parts
of $\G_1$ and $\G_2$ through surfaces of small area.
If the thick parts of $\G_1$ and $\G_2$ 
consist of spheres, we can use Lemma
\ref{lem: sliding} to deform $\G_j$
so that it consists of stable spheres
connected consecutively by thin loops
with the last sphere connected to a thin torus $T_j$
(that is, boundary of a $\delta$-neighbourhood
of an embedded loop).
By strong irreducibility assumption
each sphere $\Si_i$ bounds a ball on one of the sides.
In particular, thin tori $T_1$ and $T_2$ are
isotopic in the complement of $\Si_i$'s.
The desired isotopy can then be obtained
by repeated application of Proposition \ref{general}.
\end{proof}

We conjecture that the analogue of this Proposition
holds when $\G_1$ and $\G_2$ are strongly irreducible
Heegaard splittings of a 3-manifold $M$.

  \section{Deformation Theorems and Index bounds}\label{deformation1}
    
  In this section we apply our Interpolation result to obtain the Deformation Theorem.  Then we show how the Deformation Theorem easily implies the index bound Theorem \ref{main}. 
  
First we recall the following lemma from Marques-Neves (Corollary 5.8 in \cite{mn4}) expressing the fact that strictly stable surfaces are isolated from other stationary integral varifolds:

\begin{lemma}[Strictly Stable Surfaces Are Isolated] \label{isisolated}
Suppose $\Sigma$ is strictly stable and two-sided.  Then there exists $\varepsilon_0:=\varepsilon_0(\Sigma)$ so that every stationary integral varifold $V\in\mathcal{V}_n(M)$ in $B^F_{\varepsilon_0}(\Sigma)$ coincides with $\Sigma$.
\end{lemma}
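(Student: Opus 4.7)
The plan is to argue by contradiction, combining the strictly mean-convex foliation of a neighborhood of $\Sigma$ constructed in Subsection 2.1 with the strong maximum principle of Solomon-White for stationary integral varifolds.

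Suppose no such $\varepsilon_0$ exists; then there is a sequence of stationary integral varifolds $V_n$ with $F(V_n, \Sigma) \to 0$ and $V_n \neq \Sigma$ for every $n$. Since $F$-convergence implies weak-$*$ varifold convergence and $\|V_n\|(M) \to \mathcal{H}^2(\Sigma)$, the monotonicity formula forces $\mathrm{spt}(V_n)$ to lie in arbitrarily small neighborhoods of $\Sigma$, so eventually $\mathrm{spt}(V_n) \subset \Omega_1 = \phi(\Sigma \times (-1,1))$. Because $\Sigma$ is strictly stable and two-sided, the leaves $L_s := \phi(\Sigma \times \{s\})$ of this foliation have mean curvature vector strictly pointing toward $\Sigma$ whenever $s \neq 0$. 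Assuming $V_n \neq \Sigma$, we may by symmetry suppose $\mathrm{spt}(V_n)$ meets the positive side; then
\begin{equation*}
s_n^+ := \sup \{ s \in [0,1) : \mathrm{spt}(V_n) \cap L_s \neq \emptyset \}
\end{equation*}
is strictly positive, and by compactness of $\mathrm{spt}(V_n)$ it is attained at some $p_n \in L_{s_n^+} \cap \mathrm{spt}(V_n)$. Setting $\Omega_n = \phi(\Sigma \times (-1, s_n^+))$, we have $\mathrm{spt}(V_n) \subset \bar{\Omega}_n$, and the top boundary component $L_{s_n^+}$ is strictly mean-convex with respect to $\Omega_n$ (its mean curvature vector agrees with the inward normal into $\Omega_n$). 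The Solomon-White strong maximum principle for stationary integral varifolds in mean-convex domains then yields that the connected component of $L_{s_n^+}$ through $p_n$ is entirely contained in $\mathrm{spt}(V_n)$.

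This produces the desired contradiction. Since $\mathrm{spt}(V_n) \setminus L_{s_n^+}$ is contained in the open set $\phi(\Sigma \times (-1, s_n^+))$ and hence closed away from $L_{s_n^+}$, one finds a small ball $B$ around some $q \in L_{s_n^+}$ on which $\mathrm{spt}(V_n) \cap B = L_{s_n^+} \cap B$, and Allard's regularity then gives $V_n|_B = k\, \mathcal{H}^2|_{L_{s_n^+} \cap B}$ for some positive integer $k$. Testing the first variation against the compactly supported vector field $X = \psi\, \vec{H}_{L_{s_n^+}}$, with $\psi \geq 0$ a non-trivial bump supported in $B$, produces
\begin{equation*}
\delta V_n(X) = -k \int \psi\, |\vec{H}_{L_{s_n^+}}|^2 \, d\mathcal{H}^2 < 0,
\end{equation*}
contradicting stationarity of $V_n$. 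Hence $s_n^+ = 0$, and the symmetric argument on the negative side forces $\mathrm{spt}(V_n) \subset \Sigma$; the constancy theorem combined with mass comparison via $F$-closeness then forces $V_n = \Sigma$, contradicting $V_n \neq \Sigma$. The main technical obstacle is the correct invocation of Solomon-White in this foliated setting; an entirely analytic alternative would instead use Allard's theorem to write $V_n$ as a smooth minimal graph $\phi(\cdot, u_n)$ over $\Sigma$ with $u_n \to 0$ in $C^\infty(\Sigma)$, whereupon strict stability (invertibility of the Jacobi operator $L_\Sigma$) together with the implicit function theorem applied to the minimal surface operator forces $u_n \equiv 0$ for $n$ large.
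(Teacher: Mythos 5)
Your strategy is genuinely different from the paper's. The paper (following Marques--Neves, Corollary 5.8 of \cite{mn4}) gives essentially a one-line proof via the squeezing map $P_t$ of Lemma \ref{properties}: property (4) there states that $t\mapsto\|P_t(V)\|$ has strictly negative derivative unless $V$ is supported on $\Sigma$, while stationarity of $V$ (after localizing $\mathrm{spt}(V)$ to $N_{r_0}$ via the monotonicity formula, exactly as you do) forces that derivative to vanish at $t=0$; the constancy theorem and mass comparison then give $V=\Sigma$. That is a single global first-variation argument on the whole tubular neighborhood, whereas you locate a leaf of first contact $L_{s_n^+}$ and argue pointwise there; both approaches ultimately exploit the same strictly mean-convex foliation of $N_{h_0}(\Sigma)$.

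There is, however, a gap in your execution at the step ``one finds a small ball $B$ around some $q\in L_{s_n^+}$ on which $\mathrm{spt}(V_n)\cap B=L_{s_n^+}\cap B$.'' The fact that $\mathrm{spt}(V_n)\setminus L_{s_n^+}$ lies inside the open set $\phi(\Sigma\times(-1,s_n^+))$ does not prevent it from accumulating everywhere on $L_{s_n^+}$ from below, so the existence of such a ball is not established, and without it your first-variation test against $X=\psi\,\vec{H}_{L_{s_n^+}}$ cannot isolate the contribution of $L_{s_n^+}$ from the part of $V_n$ just beneath it. Two repairs are available: (i) invoke the strict form of the strong maximum principle, which says outright that a stationary integral varifold cannot touch a strictly mean-convex barrier, so $s_n^+>0$ is impossible with no intermediate containment step; or (ii) use $\|V_n\|\to\mathcal{H}^2(\Sigma)$ to rule out density $\geq 2$ along all of $L_{s_n^+}$ for $n$ large (else the mass would be at least roughly $2\mathcal{H}^2(\Sigma)$), locate a density-one point of $L_{s_n^+}$, and apply Allard there to conclude $L_{s_n^+}$ is minimal near that point, a contradiction with strict mean convexity. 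Your closing Allard-plus-implicit-function-theorem alternative, which writes $V_n$ as a graph $\phi(\cdot,u_n)$ with $u_n\to 0$ and uses invertibility of the Jacobi operator, is correct as stated and is arguably the cleanest self-contained route.
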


\begin{rmk}
Note that the assumption of stability is essential in Lemma \ref{isisolated}.  For example, in round $\mathbb{S}^3$ there exist a sequence of minimal surfaces $\Sigma_g$ converging to twice the Clifford torus $C$.  Thus in any small neighborhood of $2C$ are many stationary integral varifolds (\cite{KMN}, \cite{KY}).

\end{rmk}												

The proof of the lemma follows from the application of the squeezing map $P_t$
(see  section \ref{sec:squeezing}).

Let us record some further properties of the map $P_t$ (and sometimes we will write $P_t(x)$ as $P(t,x)$).
\begin{lemma}[Projection Map (Proposition 5.7 \cite{mn4})]\label{properties}
There exists $r_0>0$ such that $P: N_{r_0}\times [0,1]\rightarrow N_{r_0}$ satisfies:
\begin{enumerate}
\item $P(x,0)=x$ for all $x\in N_{r_0}$, $P(x,t)=x$ for all $x\in S$ and $0\leq t\leq 1$
\item $P(N_{r_0},t)\subset N_{r}$ for all $0\leq t\leq 1$ and $r\leq r_0$, and $P(\Sigma_{r_0},1)=S$
\item the map $P_t$ is a diffeomorphism onto its image for all $0\leq t <1$ 
\item for all varifolds, and every connected component $N$ of $N_{r_0}$, the function $t\rightarrow ||P_t(V)||_{N}$ has strictly negative derivative unless $V$ is supported on $S$, in which case it is constant.
\end{enumerate}

\end{lemma}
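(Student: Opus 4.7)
The plan is to build the squeezing map $P_t$ from a foliation of a tubular neighbourhood of $S$ by leaves whose mean curvature vectors all point towards $S$, taking advantage of the strict stability of $S$ via its first Jacobi eigenfunction. Once the foliation is in place, $P_t$ is defined as a linear contraction along the transversal parameter, and the four properties become calculations.

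To set up the foliation, since $S$ is strictly stable and two-sided, the first eigenvalue $\lambda_1>0$ of the Jacobi operator $L_S = \Delta_S + |A_S|^2 + \mathrm{Ric}(\nu,\nu)$ admits a strictly positive eigenfunction $u_1$. I would set $\phi(x,s) = \exp_x(s\, u_1(x)\nu(x))$ for $(x,s)\in S\times(-1,1)$, which is a diffeomorphism onto an open neighbourhood $\Omega_1 \supset N_{r_0}$ of $S$ for $r_0$ small enough. A direct first-variation computation yields that the leaves $S_s := \phi(S\times \{s\})$ have mean curvature $H_s = -s\lambda_1 u_1 + O(s^2)$; hence for $s\neq 0$ small, $\vec H_{S_s}$ points strictly towards $S$. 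Now define $P_t(\phi(x,s)) := \phi(x,(1-t)s)$. Properties (1) and (3) are immediate from the coordinate expression: $P_0 = \mathrm{id}$, $P_t$ fixes $S$ (where $s=0$), and in the product chart $P_t$ is the diffeomorphism $(x,s)\mapsto (x,(1-t)s)$ for $t\in[0,1)$. Property (2) follows because $|(1-t)s|\leq |s|$ forces $P_t(N_{r_0})\subset N_{r_0}$, and by continuity for any $r<r_0$ one can choose $t$ close enough to $1$ so that $P_t(N_{r_0})\subset N_r$, while $P_1$ collapses the foliation to $S$.

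The main obstacle is property (4). For a varifold $V$, the first variation formula applied to the flow of $X_t := \partial_t P_t$ gives
\[
\frac{d}{dt}\|P_t V\|(N) = \int_{G_2(N)} \mathrm{div}_\Pi X_t(p)\, d((P_t)_*V)(p,\Pi).
\]
The crux is the pointwise inequality $\mathrm{div}_\Pi X_t(p) < 0$ for every $p\in\Omega_1\setminus S$ and every $2$-plane $\Pi\subset T_p M$. To see this, I would write $X_t(p) = -\mu(p)\,\partial_s\phi$ with $\mu>0$ off $S$, then decompose $\Pi$ into its components along and normal to the leaf $S_{(1-t)s}$ through $p$, and compute in a frame adapted to the foliation. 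The tangential contribution to $\mathrm{div}_\Pi X_t$ is proportional to $-\langle \vec H_{S_{(1-t)s}}, X_t\rangle\,\cos^2\theta$, where $\theta$ measures the tilt of $\Pi$ relative to the leaf; by the inward-pointing mean curvature of the leaves this is strictly negative off $S$. The remaining normal contribution is $-\partial_s\log|X_t|\cdot\sin^2\theta$, which is also strictly negative because $(1-t)s$ strictly decreases $|X_t|$ moving inward. Both pieces vanish only at points where $\mu=0$, i.e.\ on $S$. Integrating, the derivative is strictly negative whenever $V$ has any mass off $S$; conversely, if $V$ is supported on $S$ then $X_t\equiv 0$ on $\mathrm{spt}(V)$, so $\|P_t V\|(N)$ is constant in $t$. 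This establishes the dichotomy in (4) and completes the construction.
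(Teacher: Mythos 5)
Your construction---foliating a tubular neighbourhood of the strictly stable two-sided surface $S$ by graphs of $s\,u_1$ for the positive first Jacobi eigenfunction $u_1$, and setting $P_t(\phi(x,s)) = \phi(x,(1-t)s)$---is precisely the one described in Section 2.1 of the paper and in Marques--Neves \cite[Proposition 5.7]{mn4}, which is the reference the paper cites in lieu of a proof. Properties (1)--(3) do follow immediately from the formula, as you say, and reducing (4) to a pointwise inequality $\mathrm{div}_\Pi X_t < 0$ via the first variation formula is also the right strategy.

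The gap is in your claimed decomposition of $\mathrm{div}_\Pi X_t$ into a tangential term proportional to $-\langle\vec{H}_{S_s},X_t\rangle\cos^2\theta$ plus a normal term $-\partial_s\log|X_t|\sin^2\theta$. Two things are wrong. First, because $u_1$ is non-constant the vector $\partial_s\phi$ is \emph{not} normal to the leaves $S_s$ for $s\neq 0$, so $X_t$ has a nonzero tangential component $X_t^\top$, and the term $\mathrm{div}_{S_s}(X_t^\top)$ (which your formula omits entirely) contributes to $\mathrm{div}_\Pi X_t$ at the same order as the mean-curvature term; it does not have an obvious sign. Second, your normal term is inconsistent with a direct check on $S$: writing $X_0(\phi(x,s)) = -s\,\partial_s\phi$, a short computation in foliation coordinates gives $\mathrm{div}_\Pi X_0 = -\sin^2\theta$ at $s=0$, which is bounded, whereas $-\partial_s\log|X_0| = -1/s$ diverges as $s\to 0$. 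So as written the pointwise estimate is not established, and the correct treatment of the mixed terms is exactly the content of \cite[Proposition 5.7]{mn4}; the strategy is right but the computation needs to be redone carefully.
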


 In the following, we consider a minimizing sequence $\Phi_i$ of sweepouts which means that
  \begin{equation}
 \sup_{t\in [0,1]} \mathcal{H}^2(\Phi_i)\rightarrow W.
 \end{equation}
 
 We have the following result (similar to Deformation Theorem C in Section 5.6 in \cite{mn4}) which allows us to deform the sweep-out $\Phi_i$ away from stable surfaces with multiplicities in such a way that no new stationary integral varifolds of mass $W$ arise as limits of min-max sequences:
\begin{thm}[Deformation Theorem]\label{deformation}
Suppose $M = S^3$ or a lens space. 
Suppose $\Phi_i$ is a pulled tight sequence of sweepouts by
spheres or tori (so that any min-max sequence obtained from it has a stationary limit).   Let $\Sigma$ be a stationary varifold such that 
\begin{enumerate}
\item The support of $\Sigma$ is a strictly stable two-sided closed embedded minimal surface $S$ (potentially disconnected) so that 
\begin{equation}
\Sigma=\sum_{i=1}^k n_i S_i,
\end{equation}
where $n_i$ are positive integers
\item $W = |\Sigma|$.
\end{enumerate}
Then there exist $\eta>0$ and $j_0\subset\mathbb{N}$ so that for all $i\geq j_0$, one can find a non-trivial sweepout $\Psi_i$ 
so that
\begin{enumerate}
\item $\limsup_{i\rightarrow\infty} \sup_{t\in [0,1]}(||\Psi_i(t)||)\leq W$
\item Any min-max sequence obtained from $\Psi_i$ converges to either (a) the limit of a min-max sequence from $\Phi_i$ and is disjoint from an $\eta$-ball in the $\mathbf{F}$-metric around $\Si$ or else (b) disjoint from an $\eta$-ball in $\mathbf{F}$-metric about any stationary integral varifold of mass $W$.
\end{enumerate}
\end{thm}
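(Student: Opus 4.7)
The plan is to locate the (short) time intervals on which $\Phi_i$ enters a small $\mathbf{F}$-neighbourhood of $\Sigma$ and, on each such interval, to replace the piece of the sweepout there by a family built from Propositions \ref{main_multiple} and \ref{joining two sides} whose interior slices all have mass bounded strictly below $W$. First, by Lemma \ref{isisolated}, fix $\varepsilon_0 > 0$ so small that $\Sigma$ is the only stationary integral varifold in $B^{\mathbf{F}}_{2\varepsilon_0}(\Sigma)$, and so that every slice in this ball satisfies the hypotheses of Proposition \ref{main_multiple} (small area outside the tubular neighbourhood of $S$, short intersection with its boundary via coarea, and the genus bound from the topology of the sweepout surfaces). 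Since some $n_i \ge 2$ (otherwise the conclusion is trivial, as $\Sigma = S$ is already a stable surface), the canonical form produced by Proposition \ref{main_multiple} --- a subcollection of the $S_i$'s joined by thin necks --- has mass at most
\[
\sum_i \mathcal{H}^2(S_i) = W - \eta_0, \qquad \eta_0 := W - \sum_i\mathcal{H}^2(S_i) > 0.
\]
Pull-tightness together with the isolation of $\Sigma$ also supplies $\mu > 0$ such that $\mathcal{H}^2(\Phi_i(t)) < W - \mu$ whenever $\mathbf{F}(\Phi_i(t), \Sigma) \in [\varepsilon_0, 2\varepsilon_0]$ for $i$ large.

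For $i$ large let $U_i := \{t \in [0,1] : \mathbf{F}(\Phi_i(t), \Sigma) < \varepsilon_0\}$, which consists of finitely many open intervals $(a,b)$. On each such $(a,b)$, I would partition the parameter range into three consecutive sub-intervals and (i) on the first and third, apply Proposition \ref{main_multiple} to isotope $\Phi_i(a)$ and $\Phi_i(b)$, through families of mass $< W - \mu + \delta < W$, to canonical forms $\Xi_a, \Xi_b$; (ii) on the middle sub-interval, first push both canonical forms close to $\Sigma$ by the squeezing map $P_t$ of Lemma \ref{properties}(4), which only decreases area, and then connect them via Proposition \ref{joining two sides}, whose hypotheses are met precisely under the restriction to $M = S^3$ or a lens space with sweepouts by spheres or tori. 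The $\varepsilon$-parity of multiplicity on each component of $S$ is constant on $U_i$ (it can change only when a sheet crosses $\partial N_h(S)$, which forces exit from $U_i$), so $\Xi_a$ and $\Xi_b$ correspond to the same subcollection $\{S_{k_j}\}$ and Proposition \ref{joining two sides} applies. Splicing yields a continuous family $\Psi_i$ isotopic to $\Phi_i$, and therefore a sweepout in the same saturated class.

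To verify the conclusions: outside $U_i$ one has $\Psi_i = \Phi_i$ and slices at $\mathbf{F}$-distance $\geq \varepsilon_0$ from $\Sigma$; on $U_i$, the construction gives masses bounded above by $\max\{W - \mu/2,\, W - \eta_0/2 + \delta\}$, which is $< W$. This yields (1). For (2), any min-max sequence $\Psi_i(t_i)$ with mass tending to $W$ must eventually have $t_i \notin U_i$, so $\Psi_i(t_i) = \Phi_i(t_i)$ and its varifold limit sits at $\mathbf{F}$-distance at least $\varepsilon_0$ from $\Sigma$; it is therefore also a limit of a min-max sequence from $\Phi_i$. Setting $\eta := \varepsilon_0/2$ completes the argument. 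The main obstacle is step (ii): rather than attempting to carry the interpolation of Proposition \ref{main_multiple} out continuously in $t$ (which would be very delicate, as the interpolation is not a parameter-continuous construction), the plan performs the interpolation only at the two endpoints $a, b$ and uses Proposition \ref{joining two sides} to bridge them. This is precisely where the topological hypotheses of the theorem ($M = S^3$ or lens space, sweepouts by spheres or tori, strong irreducibility) enter, via the generalized Light Bulb Theorem \ref{general} underlying the joining proposition.
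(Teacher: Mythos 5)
Your approach is genuinely different from the paper's. Rather than showing that the replacement slices avoid an $\mathbf{F}$-ball about the set $\mathcal{S}$ of stationary integral varifolds of mass $W$ (the content of the paper's Claims 1--4), you aim to bound the mass of every replacement slice strictly below $W$, so that no min-max sequence can arise from the modified intervals at all. Conceptually this is cleaner and would immediately force any min-max sequence of $\Psi_i$ into the unmodified region where $\mathbf{F}(\Phi_i(t),\Sigma)\ge \varepsilon_0$. However, the step you actually need for the mass bound does not obviously close.

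The key assertion --- that pull-tightness together with Lemma \ref{isisolated} supplies a uniform $\mu>0$ with $\mathcal{H}^2(\Phi_i(t))<W-\mu$ whenever $\mathbf{F}(\Phi_i(t),\Sigma)\in[\varepsilon_0,2\varepsilon_0]$ --- requires justification that you do not supply and that is not automatic. The natural argument is: if $|\Phi_i(t_i)|\to W$ with $\mathbf{F}(\Phi_i(t_i),\Sigma)$ in this annulus, then by pull-tightness a subsequence converges to a stationary varifold $V$ at $\mathbf{F}$-distance in $[\varepsilon_0,2\varepsilon_0]$ from $\Sigma$, contradicting Lemma \ref{isisolated}. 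But Lemma \ref{isisolated} is a statement about stationary \emph{integral} varifolds, and integrality of $V$ is not a consequence of the pull-tight hypothesis alone. The Simon--Smith/Colding--De Lellis machinery delivers integrality only for an almost-minimizing min-max sequence, not for an arbitrary one. The paper sidesteps this precisely by never identifying the limit of $\Phi_i(t_a)$ as an object in $\mathcal{V}$; instead Claim 1 argues that if $H^1_i(t_i)$ were $\mathbf{F}$-close to a sequence of stationary \emph{integral} varifolds $V_i$, then $V_i\to\Sigma$ by isolation and compactness of \emph{integral} varifolds, which forces $\Phi_i(t_a)\to\Sigma$ and contradicts $\mathbf{F}(\Phi_i(t_a),\Sigma)=\alpha$. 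If you cannot establish the $\mu$-bound, the piece of your replacement family that interpolates (via Proposition \ref{main_multiple}) from $\Phi_i(a)$ to the canonical form $\Xi_a$ could have mass arbitrarily close to $W$ (since that interpolation only controls the area increase, not the starting area), and your argument collapses.

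A second, smaller issue: dismissing the case $n_1=\dots=n_k=1$ as ``trivial, since $\Sigma=S$ is already a stable surface'' is not correct reasoning. The Deformation Theorem is supposed to push the sweepout away from $\Sigma$ regardless of multiplicities; in the multiplicity-one case it is not a priori vacuous. What is actually true (and is what the paper's Claim 2 proves via the projection map $P_1$) is that if $\Phi_i$ enters an arbitrarily small $\mathbf{F}$-neighbourhood of $\Sigma$ infinitely often, then necessarily some $n_i>1$; equivalently, in the all-$n_i=1$ case the set $U_i$ is eventually empty for $\alpha$ small and one can take $\Psi_i=\Phi_i$. That deduction requires the projection/degree argument, not just the observation that $\Sigma$ is stable. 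Moreover, in your framework the $\eta_0$-bound degenerates to zero when all $n_i=1$, so without the $\mu$-bound your mass estimate on the bridge (Proposition \ref{joining two sides}) gives only $\le W+\delta$, which is not strictly below $W$. So this case is not merely a cosmetic omission: your two mass estimates fail simultaneously there, and the claim that it is ``trivial'' is papering over exactly the point where the argument needs the most care.
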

\begin{proof}
Let $\alpha>0$ be a small number to be specified later.  If $\alpha$ is small enough, we can find finitely many disjoint intervals $V'_{i,\alpha}\subset [0,1]$ so that if $t\in [0,1]\setminus V'_{i,\alpha}$ then $\mathbf{F}(\Phi_i(t),\Sigma)\geq\alpha$ and for $t\in V'_{i,\alpha}$, we have $\mathbf{F}(\Phi_i(t),\Sigma)\leq 2\alpha$.  Moreover, for $t\in\partial V'_{i,\alpha}$, we obtain $\alpha\leq \mathbf{F}(\Phi_i(t),\Sigma)\leq 2\alpha$.  Note that for $\alpha$ small enough, it cannot happen that $V'_{i,\alpha}=[0,1]$ since $\Phi_i([0,1]$ is a non-trivial sweepout of $M$.   Moreover, it cannot happen that $V'_{i,\alpha}$ is empty (as otherwise the deformation theorem would already be proved).   Thus the set $V'_{i,\alpha}$ consists of several sub-intervals of $[0,1]$.  Let $V_{i,\alpha}$ denote one connected component of $V'_{i,\alpha}$.  We will amend the sweepout $\Phi_i$ in the interval $V_{i,\alpha}$ and since $V'_{i,\alpha}$ consists of disjoint such intervals, we can apply the analagous alteration in each connected component.

We can choose $\alpha$ so small so that if $\mathbf{F}(\Phi_i(t),\Sigma)<\alpha $ then the area of $\Phi(t)$ outside of $N(\Sigma)$ is smaller than the $\varepsilon$ in the statement of Lemma \ref{contracting_hair},
the $\varepsilon_0$ in the statement of Proposition 
\ref{prop: multiple components},
and less than $3\varepsilon_0$ (where $\varepsilon_0$ is from Lemma \ref{isisolated}).

Let us consider one such interval $V_{i,\alpha}$, which is $[t_a,t_b]$.
 
For each $i$, Lemma \ref{contracting_hair} furnishes an isotopy $H^1_i(t)_{t\in[0,1]}$ so that
\begin{enumerate}
\item $H^1_i(0)=\Phi_i(t_a)$
\item $H^1_i(t)\leq |\Phi_i(t_a)|+1/i$
\item $H^1_i(1)$ is a surface with $|H^1_i(t)\cap N(\Si)|<1/i$
\end{enumerate}

We can further let $H^2_i(t) = P_t(H^1_i(1))$ where $t$ is parameterized from $t=0$ to $t=1-q_i$ (where $q_i$ will be a sequence approaching zero as $i\rightarrow 0$ to be chosen later). 

Furthermore, when $i$ is large enough it follows that from Proposition 
\ref{prop: multiple components}
there exists an isotopy $H_i^3(t)_{t\in[0,1]}$ so that

\begin{enumerate}
\item $H^3_i(0)=H^2_i(1-q_i)$
\item $H^3_i(t)\leq |H^3_i(1)|+1/i$
\item $H^3_i(1)  =\bigcup_{i\in\mathcal{C}}S'_i\cup\bigcup_k T^1_k$
\end{enumerate}
 
 In (3), $\mathcal{C}$ denotes the subcollection of $\{1,2,...k\}$ such that $S_i$ is $\varepsilon$-odd.   $S'_i$ denotes the surface $S_i$ with several disks removed, and the set $\{T^1_i\}$ consists of thin tubes connecting to the the various $S'_i$ at the circles where the disks from $S_i$ have been removed so that $\bigcup_{i\in\mathcal{C}}S'_i\cup\bigcup_k T^1_k$ is a closed surface.

Let us define the isotopy $H^4_i(t)_{t\in[0,1]}$ by concatenating  $H^1_i(t)_{t\in[0,1]}$, $H^2_i(t)_{t\in[0,1-q_i]}$ and $H^3_i(t)_{t\in[0,1]}$

Thus $H^4_i(t)_{t\in[0,1]}$ furnishes an isotopy between $\Phi_i(t_a)$ and the closed surface $\bigcup_{i\in\mathcal{C}}S'_i\cup\bigcup_k T^1_k$.  

Similarly, we can obtain an isotopy $H^5_i(t)_{t\in[0,1]}$ beginning at $\Phi_i(t_b)$ when $t=0$ and terminating at $\bigcup_{i\in\mathcal{C}}S'_i\cup\bigcup_k T^2_k$ satisfying the hypothesis above where the thin tubes $T^1_k$ are replaced by potentially different $T^2_k$.  


By Proposition \ref{joining two sides} there is an isotopy connecting $\bigcup_{i\in\mathcal{C}}S'_i\cup\bigcup_k T^2_k$ and $\bigcup_{i\in\mathcal{C}}S'_i\cup\bigcup_k T^1_k$ that increases area at most $1/i$ along the way.  

Denote the isotopy $I_i(t)_{t\in [0,1]}$ that begins at $t=0$ at $\bigcup_{i\in\mathcal{C}}S'_i\cup\bigcup_k T^1_k$ and terminates at $t=1$ at $\bigcup_{i\in\mathcal{C}}S'_i\cup\bigcup_k T^2_k$. 

Thus we can concatenate $H^4_i(t)_{t\in[0,1]}$, together with $I_i(t)_{t\in [0,1]}$ and then $H^5_i(1-t)_{t\in[0,1]}$ to obtain an isotopy $H^6_i(t)_{t\in[0,1]}$ beginning at $t=0$ at  $\Phi_i(t_a)$ and terminating at $t=1$ at  $\Phi_i(t_b)$. 

Let us replace by the sweepout $\Phi_i(t)$ in the interval $[t_a,t_b]$ by the isotopy $H^6_i(t)_{t\in[0,1]}$.  In this way we obtain a new sweep-out $\Phi'_i(t)$.  

We claim that $\Phi'_i(t)$ satisfies the conclusion of the theorem.   It follows from the properties of $H^1_i$, $H^2_i$, $H^3_i$ and their mirror images $H^5_i$ that
\begin{equation}
\sup_{t\in [0,1]} |\Phi'_i(t)| \rightarrow W\mbox{ as } i\rightarrow\infty \end{equation}

We need to show that \emph{no min-max sequence} of $\Phi'_i(t)$ obtained from the intervals $V_{i,\varepsilon}$ converges to anything in an $\varepsilon$-ball about the space $\mathcal{S}$ of stationary integral varifolds with mass $W$.  Let us denote this latter space $B_\varepsilon(\mathcal{S})$.  
\\
\\
\noindent
\emph{Claim 1:} There exists an $\varepsilon_1>0$ so that for $i$ large enough,  $H^1_i(t)\cap B_{\varepsilon_1}(\mathcal{S})=\emptyset$ for all $t\in [0,1]$.
 \\
 \\
 \noindent
 Suppose toward a contradiction that there is a subsequence (not relabelled) $i\rightarrow\infty$ as well as a sequence of stationary integral varifolds $V_i$ as well as $t_i\in [0,1]$ so that $\mathbf{F}(V_i, H^1_i(t_i))\rightarrow 0$ as $i\rightarrow\infty$.  We can assume $|V_i|=W$ and that $V_i$ converge to a stationary integral varifold $V$.  Note that for $i$ large enough,
 \begin{equation}
 \mathbf{F}(H^1_i(t_i),\Sigma)<\mathbf{F}(H^1_i(t_i),\Phi_i(t_a))+\mathbf{F}(\Phi_i(t_a),\Sigma) \leq 1/i+2\varepsilon <\varepsilon_0 
 \end{equation}
 because of item (3) in the list of properties that $H^1_i$ satisfies and because the $\mathbf{F}$ metric between two surfaces is bounded from above by the area of the symmetric difference of the surfaces. 
 Thus $V_i\in B_{\varepsilon_0}^\mathbf{F}(\Sigma)$ for $i$ large and so by Lemma \ref{isisolated} we have that for $i$ large enough, $V=V_i=\Sigma$.   Thus we have $\mathbf{F}(H^1_t(t_i),\Sigma)\rightarrow 0$.
   From this we can easily deduce that $\Phi_i(t_a)\rightarrow\Sigma$, which contradicts the fact that $\mathbf{F}(\Phi_i(t_a),\Sigma)=\alpha$ by definition.  Indeed, by construction the limit $L$ of $\Phi_i(t_a)$ and $L'$ of $H^1_t(t_i)$ coincide in $N(\Sigma)$ with total mass $W$, and moreover $L$ can have no support outside of $N(\Sigma)$ as $\Phi_i$ are a minimizing sequence with maximal area approaching $W$ as $i\rightarrow\infty$.
\\
\\
 \noindent
 \emph{Claim 2:}  At least one of the integers $\{n_i\}_{i=1}^k$ is greater than $1$.  Moreover, $$H^2_i(t)\cap B_{\varepsilon_2}(\mathcal{S})=\emptyset$$ for all $t\in [0,1]$ and some $\varepsilon_2>0$.\\
 \\
 Suppose toward a contradiction that $n_1=n_2=...=n_k=1$.   Consider the sequence of surface $H^1_i(1)$, and their limit $A$ which is an integral varifold supported in the tubular neighborhood $N(\Sigma)$ that is homologous to $\Sigma$.  We can also consider $\tilde{A}=P_1(A)$.  Note that $|A|\leq W$.  It follows from the properites of the projection mapping $P_t$ that $|\tilde{A}|\leq|A|$.  But $W\leq |\tilde{A}|$ since $\tilde{A}$ is an integral varifold supported on $\Sigma$ with multiplicity at least one everywhere.  Thus we obtain $|\tilde{A}|=|A|=W$ which implies by item (4) in Lemma \ref{properties} that $A=\Sigma$.  Thus $H^1_i(1)\rightarrow \Sigma$.  
 Since $\mathbf{F}(H^1_i(1),\Phi_i(t_a))\rightarrow 0$, we obtain as in Claim 1 that  $F(\Phi_i(t_a), \Sigma)\rightarrow 0$.  But this contradicts the definition of $t_a$ as in Claim 1.
 
For the second part of the claim, suppose toward a contradiction that there is a subsequence (not relabelled) $i\rightarrow\infty$ as well as a sequence of stationary integral varifolds $V_i$ as well as $t_i\in [0,1]$ so that $\mathbf{F}(V_i, H^2_i(t_i))\rightarrow 0$ as $i\rightarrow\infty$.  We can assume $|V_i|=W$ and that $V_i$ converge to a stationary integral varifold $V$ and that $t_i\rightarrow t$.  Note that $V$ is supported on $S$ by Property (4) of the projection map and the fact that the area of $H^2_i(t_i)$ outside of $N(\Sigma)$ is approaching $0$ as $i\rightarrow \infty$.  Letting $A$ denote the limit of the surfaces $H^1_i(1)$, we have $|A|\leq W$.  We have from the definition of $H^2_i$ it follows that $V=P_t(A)$ and $|V|\leq |A|$ as the map $P_t$ is area non-increasing. Thus we obtain $|V|=|A|=W$ from which we deduce that $H^1_i(1)=V$, contradicting Claim 1.\\
 \\
 \noindent
 \emph{Claim 3:}  There exists $\varepsilon_2>0$ so that for $i$ large enough, and a choice of $q_i$ we have that for some connected component $S_1$ of $S$, there holds 
 \begin{equation}
 |H^2_i(1-q_i)\cap N(S_1)| < |\Sigma| -\varepsilon_2.
 \end{equation}
We will in fact show that 
\begin{equation}\label{contra}
|P_1(H^1_i(1))\cap N(S_1)| \leq |\Sigma|-\varepsilon_2,
\end{equation}
 from which Claim 3 follows immediately by choosing $q_i$ small enough.  If \eqref{contra} were to fail we would have a subsquence of $i$ so that for all $k$,
 \begin{equation}\label{issupported}
 \limsup_{i\rightarrow\infty} |P^k_1(H^1_i(1))|\geq |\Sigma|(N(S_k))
 \end{equation}
 
 Let us assume $P_1(H^1_i(1))$ converges as varifolds to a varifold $V$ with $|V|=W$ and $V$ is supported in $S$.  Arguing as in the previous claims, we conclude that $H^1_i(1)\rightarrow V$ and moreover that $\Phi_i(t_a)\rightarrow V$.  By \eqref{issupported}, the fact that $|V|=W$, and the Constancy Theorem, we obtain that $V$ is equal to $\Sigma$.   So we have $\Phi_i(t_a)\rightarrow \Sigma$ contradicting the fact that $\mathbf{F}(\Phi_t(t_a),\Sigma)=\alpha$.
\\
\\
 \noindent
 \emph{Claim 4:}  There exists an $\varepsilon_2>0$ so that for $i$ large enough,  $H^3_i(t)\cap B_\varepsilon(\mathcal{S})=\emptyset$ for all $t\in [0,1]$.
 \\
 \\
As in Claim 1), suppose toward a contradiction that there is a subsequence (not relabelled) $i\rightarrow\infty$ as well as a sequence of stationary integral varifolds $V_i$ as well as $t_i\in [0,1]$ so that $\mathbf{F}(V_i, H^3_i(t_i))\rightarrow 0$ as $i\rightarrow\infty$.  We can assume $|V_i|=W$ and by Allard's compactness theorem $V_i$ converge to a stationary integral varifold $V$.  Thus also $H^3_i(t_i)$ converge to $V$. 
The only stationary integral varifolds supported in $N(S)$ are the $S_i$ with some integer multiplicities.  It follows that $V=\sum k_i S_i$ for some non-negative integers $k_i$.     Note that since $|V|=W$, and by claim 3), $V\neq \Sigma$, it follows that at least one of the $k_i$ is less than $n_i$, and at least one of the $k_i$ is is greater than $n_i$ (say $k_r$).  Thus $H^3_i(t_i)\cap N(S_r)$ converges to $k_r S_r$.  But this as before implies $\Phi_i(t_a)\cap N(S_r)$ converges to $k_r S_r$, which contradicts the fact that $\mathbf{F}(\Phi_i(t_a), \Sigma)\geq\alpha$.
 \\
 \\
 Since $F(\Phi'(t),\Sigma)\geq \alpha$ for $t\in [0,1]\setminus V'_{i,\alpha}$, by setting $\eta:=\min(\varepsilon_1,\varepsilon_2,\alpha)$, Claim (3) and Claim (1) together imply the Deformation theorem. 
 
 Finally, we observe that family $Psi_i$ forms 
 a non-contractible loop in the space of flat cycles. 
 This follows by Interpolation results in Appendix A
 of \cite{mn4}. In particular, the sweepout that
 we obtained is non-trivial.
\end{proof}
\noindent
Given the Deformation Theorem, together with Deformation Theorem A of Marques-Neves \cite{mn4} it is easy to prove the Index Bound in the Simon-Smith setting which we restate:
\begin{thm}[Index Bounds for Minimal Two-Spheres] \label{main2}
Let $M$ be a Riemannian three-sphere with bumpy metric. Then a min-max limit of minimal two-spheres $\{\Gamma_1, ...\Gamma_k\}$ satisfies:
\begin{equation}\label{generic}
\sum_{i=1}^k\text{\emph{index}}(\Gamma_i) = 1.
\end{equation}
If the metric is not assumed to be bumpy, then we obtain
\begin{equation}\label{nongeneric}
\sum_{i=1}^k\text{\emph{index}}(\Gamma_i) \leq  1 \leq 
\sum_{i=1}^k\text{\emph{index}}(\Gamma_i)+\sum_{i=1}^k\text{\emph{nullity}}(\Gamma_i).
\end{equation}\end{thm}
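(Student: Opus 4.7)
The upper bound $\sum_i \text{index}(\Gamma_i)\leq 1$ in both \eqref{generic} and \eqref{nongeneric} is precisely Theorem~\ref{mn}, so the only task is the lower bound. My plan is to derive it by contradiction using the Deformation Theorem~\ref{deformation} proved above together with Deformation Theorem~A of Marques--Neves~\cite{mn4}.

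First I would fix a minimizing sequence $\{\Phi_i\}$ of sweepouts, pulled tight so that every min-max subsequence has a stationary integral varifold limit of mass $W$; let $\mathcal{S}$ denote the collection of such limits, each of the form $\Sigma=\sum_j n_j\Gamma_j$ with smooth pairwise disjoint embedded minimal surfaces $\Gamma_j$ summing to mass $W$. Suppose for contradiction that every $\Sigma\in\mathcal{S}$ satisfies $\sum_j(\text{index}(\Gamma_j)+\text{nullity}(\Gamma_j))=0$, so that each $\Gamma_j$ is strictly stable. Lemma~\ref{isisolated} then makes each such $\Sigma$ isolated among stationary integral varifolds of mass $W$; combined with Allard's compactness this forces $\mathcal{S}$ to be a finite set. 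For each $\Sigma\in\mathcal{S}$ I will invoke Theorem~\ref{deformation} when some multiplicity $n_j\geq 2$, and Deformation Theorem~A of~\cite{mn4} when all $n_j=1$, in each case obtaining a perturbed sweepout whose min-max subsequences stay outside a fixed $\mathbf{F}$-neighborhood of $\Sigma$. Iterating these deformations once for each of the finitely many elements of $\mathcal{S}$ yields a new non-trivial minimizing sequence $\tilde{\Psi}_i$ all of whose min-max subsequences are eventually a definite $\mathbf{F}$-distance away from $\mathcal{S}$.

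This is the desired contradiction: Theorem~\ref{ss2} applied to $\tilde{\Psi}_i$ (which remains non-trivial by the last paragraph of the proof of Theorem~\ref{deformation}) produces some min-max subsequence whose varifold limit lies in $\mathcal{S}$. Hence some $\Sigma\in\mathcal{S}$ must in fact satisfy $\sum_j(\text{index}(\Gamma_j)+\text{nullity}(\Gamma_j))\geq 1$; together with the upper bound this is exactly \eqref{nongeneric}, and in the bumpy case every nullity vanishes by White's theorem \cite{W}, upgrading this to \eqref{generic}.

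The main obstacle in executing this plan is the multiplicity case $n_j\geq 2$, to which Deformation Theorem~A of \cite{mn4} does not apply. This is the regime handled by the Interpolation Propositions~\ref{main_deformation} and~\ref{main_multiple} and packaged into Theorem~\ref{deformation}; once those interpolation tools are in hand, the contradiction argument above is short and entirely formal. A secondary technical point is ensuring that the finitely many deformations can be performed consecutively without destroying the $\mathbf{F}$-separation obtained in earlier steps, which follows because each application of Theorem~\ref{deformation} and of Deformation Theorem~A modifies the sweepout only on short time-intervals concentrated near the corresponding $\Sigma$ and preserves the asymptotic area bound by $W$.
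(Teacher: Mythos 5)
Your argument for the bumpy case is essentially the paper's: both rely on Lemma \ref{isisolated} to isolate strictly stable configurations, the Deformation Theorem \ref{deformation} to push the minimizing sequence off them, and the Marques--Neves upper index bound (their Theorem 6.1) to finish. The organizational wrapper is different --- the paper deforms directly, noting bumpiness gives finitely many strictly stable orientable minimal surfaces of area $\le W$ and hence only finitely many integer combinations of mass $W$, while you run a contradiction and get finiteness from isolation plus Allard compactness of the set of stationary integral varifolds of mass $W$ --- but these are the same ideas in different dress, and both are fine. (Your remark that Deformation Theorem~A of \cite{mn4} is what handles the multiplicity-one stable configurations is actually a little more careful than the paper's own exposition, which just invokes Theorem \ref{deformation}.)

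Where your proposal has a genuine gap is the non-bumpy case. You open by asserting that the upper bound $\sum_i \text{index}(\Gamma_i)\le 1$ ``in both \eqref{generic} and \eqref{nongeneric} is precisely Theorem~\ref{mn}.'' But Theorem~\ref{mn} as stated in the paper is conditioned on bumpiness; it does not give the upper bound for a general metric. Your contradiction argument (correctly, and more elegantly than the paper) produces some min-max limit with $\sum_j(\text{index}+\text{nullity})\ge 1$ without using bumpiness, since the contradiction hypothesis $\text{nullity}=0$ already forces strict stability so that Lemma \ref{isisolated} applies. However you never obtain the left-hand inequality $\sum_j\text{index}(\Gamma_j)\le 1$ for the non-bumpy metric. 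The paper sidesteps this by first proving the generic equality and then approximating a general metric $g$ by bumpy metrics $g_i$, taking index-$1$ min-max limits $\Lambda_i$, and analyzing what happens under multiplicity-one versus higher-multiplicity convergence of $\Lambda_i$: multiplicity-one convergence transports the index/nullity information via continuity of the eigenvalues of the stability operator, while higher-multiplicity collapse produces a stable limit with a positive Jacobi field, in either case yielding \eqref{nongeneric} while simultaneously preserving the genus bound \eqref{gb}. Your proposal should either import the paper's limiting argument for the non-bumpy case (which delivers the upper bound as a byproduct) or supply an independent proof of the upper index bound without assuming bumpiness, which is not available from Theorem~\ref{mn} alone.
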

\begin{proof}
 Suppose $\{\Phi_i\}$ is a pulled-tight minimizng sequence.  Since the metric is bumpy, there are only finitely many embedded index $0$ orientable minimal surfaces with area at most $W$.   Thus there are only finitely many stationary integral varifold supported on a strictly stable minimal surface with total mass $W$.  Denote this set $\mathcal{W}_0$.  Applying the Deformation Theorem \ref{deformation} we obtain a new minimizing family $\{\Phi'_i\}$ 
 so that no element of $\mathcal{W}_0$ is in the critical set of $\{\Phi'_i\}$, while no new stationary integral varifolds are in the critical set.  After pulling this family tight, we can apply Theorem 6.1 (\cite{mn4}) we can find a minimal surface in the critical set of $\{\Phi''_i\}$ with index at most $1$.  Thus the Theorem is proved. 
 
 If the metric $g$ not bumpy, then we can take a sequence of metrics $g_i$ converging to $g$. The min-max limit $\Lambda_i$ for each $g_i$ can be chosen to have index $1$ by the above.  From convergence of eigenvalues of the stability operator we have that, if the convergence is with multiplicity $1$,  the stable components can converge to a stable minimal surface in $g$ with or without nullity, and the unstable component may converge to either a) an unstable minimal surface or else b) a stable minimal surface with nullity.  This establishes the bounds \eqref{nongeneric} in these cases.  If the convergence is instead with multiplicity for some component, the limit is automatically stable as it has a positive Jacobi field, establishing \eqref{nongeneric}.  Finally note that in the case of collapse with multiplicity the genus bounds \eqref{gb} are preserved. 
 \end{proof}

 We will also need to consider the situation of widths of manifolds with boundary.  To that end, let $M$ be a three-manifold and $\Sigma$ a strongly irreducible splitting so that $M\setminus\Sigma=H_1\cup H_2$.  Suppose $\Sigma^1_k<...<\Sigma^1_0=\Sigma$ is a chain of compressions on the $H_1$ side and $\Sigma^2_j<...<\Sigma^2_0=\Sigma$ is a chain of compressions on the $H_2$ side.  

Remove from $M$ the handlebodies bounded by the components of $\Sigma^1_k$ and $\Sigma^2_j$ to obtain a manifold $\tilde{M}$ with boundary $B_1$ in $H_1$ and $B_2$ in $H_2$.   Suppose $B_1$ and $B_2$ are strictly stable minimal surfaces.   Consider sweepouts $\{\Sigma_t\}$ beginning at $B_1$ together with arcs joining the components,  and terminating at $B_2$ together with arcs.  We moreover demand that $\Sigma_t$ is isotopic to $\Sigma$ for $0<t<1$.   Let $W$ denote the width for this min-max problem.  Then we have the following proposition, which allows us to apply min-max theory:

\begin{prop}[Boundary Case]\label{boundary}
\begin{equation}
W>\sup_{C\in B_1\cup B_2} |C|.  
\end{equation}
Thus by the index bound \eqref{generic2} we obtain in the interior of $\tilde{M}$ an index $1$ minimal surface. 
\end{prop}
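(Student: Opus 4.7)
The plan is to argue by contradiction. Suppose $W = |C_0|$ for some component $C_0$ of $B_1 \cup B_2$. Every sweepout in the admissible class has $\mathcal{H}^2(\Sigma_0) = |B_1|$ and $\mathcal{H}^2(\Sigma_1) = |B_2|$ (the arcs are $2$-negligible), so $W \geq \max(|B_1|,|B_2|) \geq |C_0|$. The contradiction hypothesis then forces, say, $C_0 \subset B_1$ with $B_1 = C_0$ connected and $W = |B_1|$, and $|B_2| \leq |B_1|$.

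Next I would fix a pulled-tight minimizing sequence $\{\Phi_i\}$ and, by a standard pull-tight plus maximum principle argument analogous to the one used in Lemma \ref{contracting_hair}, extract a min-max subsequence $\Phi_i(t_i)$ with $t_i$ bounded away from $\{0,1\}$ whose varifold limit $V$ is stationary with mass $W = |B_1|$ and support in $\overline{\tilde M}$. Since the ambient metric on $M$ is bumpy, the set of stationary integral varifolds in $\overline{\tilde M}$ of mass $W$ supported on strictly stable minimal surfaces is finite, and Lemma \ref{isisolated} shows that each such varifold is isolated in the $\mathbf{F}$-metric. The possible supports of $V$ are $B_1$, $B_2$ and finitely many interior strictly stable minimal surfaces, combined with integer multiplicities.

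I would now iterate the Deformation Theorem \ref{deformation}, applied with Proposition \ref{main_multiple} taken for the disconnected stable minimal surface $\Sigma = B_1 \cup B_2 \cup \mathcal{M}_{\mathrm{int}}$, on an interior subinterval $[t_a,t_b] \Subset (0,1)$ containing the near-maximum slices of $\Phi_i$. This produces a non-trivial modified sweepout $\{\Psi_i\}$ whose min-max limits avoid an $\eta$-$\mathbf{F}$-neighborhood of every stationary integral varifold of mass $W$ supported on strictly stable minimal surfaces. Running the Deformation Theorem once for each such varifold (finitely many, by bumpiness) yields a family with $\sup_t \mathcal{H}^2(\Psi_i(t)) \to W' < W$, contradicting the definition of $W$ as the infimum. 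Hence $W > \sup_C |C|$. The conclusion of the proposition then follows by applying Theorem \ref{indexstrong} to the constrained min-max problem on $\tilde M$: since the width strictly exceeds the area of every boundary component, no min-max limit can coincide with a boundary component, so the index-$1$ minimal surface produced lies in the interior of $\tilde M$.

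The main obstacle is the interpolation step at the boundary endpoints: we are not allowed to modify $\Phi_i$ at $t = 0$ or $t = 1$, because the sweepout is constrained to start at $B_1$ and end at $B_2$. The key observation is that this constraint only anchors the $2$-cells at the endpoints, while the interpolation machinery of Section \ref{multiplecomponents} operates on interior slices $\Phi_i(t)$ with $t \in [t_a,t_b]$. Since $B_1$ and $B_2$ are themselves strictly stable, Proposition \ref{main_multiple} applies to their union together with interior stable minimal surfaces, and the preservation of non-triviality of $\{\Psi_i\}$ as a relative loop in the space of mod $2$ flat $2$-cycles follows from the interpolation arguments of Appendix A of \cite{mn4}, carried out relative to $\partial[0,1]$.
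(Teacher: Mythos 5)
Your proposed proof takes a genuinely different route from the paper, but it has a fatal gap at its central step. The paper's argument is direct and local: assuming $W = |B_1|$, it uses that $\Sigma^i_t \to B_1$ as $t \to 0$ to choose, for each $i$, a slice $\Sigma^i_{t_i}$ at fixed $\mathbf{F}$-distance $\delta/2$ from $B_1$ with negligible area outside the tubular neighborhood $N$ of $B_1$. After applying Lemma \ref{contracting_hair} and performing neck-pinches on $\partial N \cap \tilde\Sigma^i_{t_i}$, one gets a surface $\overline{\Sigma}^i_{t_i}$ contained in $N$ of no larger area. Its subsequential limit $A$ satisfies $|A| \leq W$; but the projection $\tilde A = P_1(A)$ is an integral varifold supported on $B_1$ with multiplicity at least one, so $W \leq |\tilde A| \leq |A| \leq W$, forcing $A = \tilde A = B_1$ (with multiplicity one, by the Constancy Theorem). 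This says $\Sigma^i_{t_i} \to B_1$, contradicting $\mathbf{F}(\Sigma^i_{t_i}, B_1) = \delta/2$ for all $i$.

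Your plan, by contrast, tries to derive the contradiction by iterating the Deformation Theorem to produce a sweepout $\Psi_i$ with $\sup_t \mathcal{H}^2(\Psi_i(t)) \to W' < W$. This step is not correct: the Deformation Theorem \ref{deformation} only guarantees that the new family's max areas satisfy $\limsup_i \sup_t \|\Psi_i(t)\| \leq W$ and that its min-max limits avoid an $\eta$-ball around a \emph{single specified} stable stationary varifold. It provides no mechanism for strictly lowering the width. If it did, one could iterate the same argument in any min-max problem and drive the width to zero, which is absurd. After exhausting the finitely many stable candidates, you would at most conclude that some min-max limit is unstable or lies in the interior; neither statement contradicts $W = \sup_C |C|$, so the contradiction you claim does not materialize.

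Two further issues reinforce that the approach as stated cannot close: (i) you assert you can extract min-max slices $\Phi_i(t_i)$ with $t_i$ bounded away from $\{0,1\}$ and mass $\to W$; this is unjustified, and when $W = |B_1|$ the danger is precisely that the supremum is approached as $t \to 0$, where the slices are forced near $B_1$; (ii) the remark that the boundary constraint merely anchors the end slices while the interpolation machinery operates on interior slices misses the core difficulty — the problematic accumulation on $B_1$ happens at $t \to 0$, exactly where the sweepout cannot be modified. The paper's proof confronts this head-on by working near $t = 0$ and using the degree/multiplicity argument via the squeezing map $P_1$; your approach sidesteps it and loses the contradiction.
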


\begin{proof}
Suppose toward a contradiction $W=\sup_{C\in B_1\cup B_2} |C|$.  Suppose without loss of generality that the supremum is realized by $B_1$.   Let us consider a pulled-tight minimizing sequence $\{\Sigma^i_t\}_{i=1}^\infty$ so that \begin{equation}
\sup_{t\in [0,1]} |\Sigma^i_t|\rightarrow W\mbox{ as } i\rightarrow\infty
\end{equation}

Let $N:=N_{r_0}$ denote the tubular neighborhood about $B_1$ on which $P_t$ is defined via Lemma \ref{properties}.  For all $\varepsilon>0$ there exists $\delta>0$ so that if $\mathbf{F}(X,B_1)<\delta$ then $\mathcal{H}^2(X\setminus N))<\varepsilon$.    Choose $\varepsilon_0$ to be that provided by Lemma \ref{contracting_hair}.

Since $\Sigma^i_t$ converges as varifolds to $B_1$ as $t\rightarrow 0$, for each $i$, we can choose $t_i$ so that $F(\Sigma^i_{t_i},B_1)=\delta/2$ and $\mathcal{H}^2(\Sigma^i_{t_i}\setminus N)<\varepsilon_0$.  Thus from Lemma \ref{contracting_hair} we can isotope $\Sigma^i_{t_i}$ to a surface satisfying $\mathcal{H}^2(\tilde{\Sigma}^i_{t_i}\setminus N)<\delta_i$ where $\delta_i$ is any sequence approaching $0$ as $i\rightarrow\infty$.

Since $\delta_i$ is approaching zero, we can perform neck-pinches on $\partial N\cap \tilde{\Sigma}^i_{t_i}$ (c.f. Proposition 2.3 in \cite{dp}) o obtain a sequence of surfaces $ \overline{\Sigma}^i_{t_i}$ entirely contained in $N$ so that
\begin{equation}\label{long}
 |\overline{\Sigma}^i_{t_i}|\leq |\tilde{\Sigma}^i_{t_i}|\leq |\Sigma^i_{t_i}|.
\end{equation}  
Let $A$ denote a subsequential limit of $\overline{\Sigma}^i_{t_i}$ as $i\rightarrow \infty$.  Note that $|A|\leq W$ by \eqref{long} and the fact that $\Sigma^i$ are a minimizing sequence.   Let $\tilde{A}=P_1(A)$.  We have $W\leq |\tilde{A}|$ since $\tilde{A}$ is an integral varifold supported on $B_1$ with multiplicity at least one everywhere.  By the properties of $P$ (Lemma \ref{properties}) we have $|\tilde{A}|\leq |A|$.  Thus putting this together we obtain $W\leq |\tilde{A}|\leq |A|\leq W$.  In other words, $\tilde{A}=A$ again by the properties of the projection map (Lemma \ref{properties}).  It follows that $A$ is supported on $B_1$ and thus must consist of the surfaces comprising $B_1$ with multiplicity $1$ by the Constancy Theorem since $A$ is stationary (as the limit of a min-max sequence from a pulled-tight minimizing sequence).  Thus we have $\Sigma^i_{t_i}\rightarrow B_1$ with multiplicity $1$.  But this contradicts the fact that $F(\Sigma^i_{t_i},B_1)=\delta/2$ for all $i$.

\end{proof}

  \section{Applications}\label{applications}

In this section, let us prove the claim of Pitts-Rubinstein (Theorem \ref{pr}), which we restate:
 \begin{thm}[Pitts-Rubinstein Conjecture (1986)] \label{pr2}
 Let $M$ be a hyperbolic $3$-manifold and $\Sigma$ a strongly irreducible Heegaard surface.  Then either 
 \begin{enumerate}
 \item $\Sigma$ is isotopic to a minimal surface of index $1$ or $0$ or 
 \item after a neck-pinch performed on $\Sigma$, the resulting surface is isotopic to the boundary of a tubular neighborhood of a stable one sided Heegaard surface.
 \end{enumerate}
 If $M$ is endowed with a bumpy metric, in case (1) we can assume the index of $\Sigma$ is $1$.
  \end{thm}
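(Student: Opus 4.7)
The plan is to apply Theorem \ref{indexstrong} directly to the strongly irreducible Heegaard splitting $\Sigma$ in the bumpy hyperbolic manifold $M$. This furnishes a collection $\{\Gamma_1,\ldots,\Gamma_k\}$ of pairwise disjoint embedded minimal surfaces, each obtained from $\Sigma$ by a sequence of neck-pinch surgeries, satisfying $\sum_{i=1}^k \text{index}(\Gamma_i) = 1$. Thus exactly one component $\Gamma_\ast$ is unstable of index $1$, while all remaining components are strictly stable. The associated min-max limit varifold has the form $\sum n_i \Gamma_i$ where the multiplicities $n_i$ record how the sweepout collapses; by the index bound together with the interpolation result of Section \ref{interpolation}, we may assume each orientable $\Gamma_i$ appears with multiplicity $n_i=1$, while any non-orientable $\Gamma_i$ appears with $n_i=2$ (since two-sided approximants can converge to a one-sided surface only via its orientation double cover).

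Next I would exploit the hyperbolicity. Since every minimal immersed surface $\Gamma$ in $M$ satisfies, by the Gauss equation, $K_\Gamma = -1 - \tfrac{1}{2}|A|^2 \leq -1$, Gauss-Bonnet gives $2\pi\chi(\Gamma) \leq -\text{Area}(\Gamma) < 0$. Consequently $M$ contains no minimal $S^2$, $\mathbb{RP}^2$, $T^2$, or Klein bottle; in particular, every orientable $\Gamma_i$ has genus $\geq 2$, and any non-orientable $\Gamma_i$ has genus $\geq 3$ (in the cross-cap sense). Moreover $M$ is irreducible and atoroidal, so every embedded sphere bounds a ball and every embedded torus is compressible. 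I would combine these facts with the genus bound \eqref{gb} to see that the combinatorial possibilities for how $\Sigma$ can be reconstructed from the $\Gamma_i$ by reversing neck-pinches are severely restricted.

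The case analysis then proceeds on whether any component is non-orientable. If every $\Gamma_i$ is orientable with multiplicity one, then $\Sigma$ is obtained from $\bigsqcup \Gamma_i$ by attaching tubes. Any tube whose core bounds a disk in $M$ may be removed by isotopy; since $M$ is irreducible, any sphere components created by such reductions bound balls and may be discarded. Strong irreducibility of $\Sigma$ then forces all but one component to be of this trivial type: otherwise, two nontrivial compressions on opposite sides of $\Sigma$ would be realized by disjoint disks, contradicting strong irreducibility. This yields case (1) of the theorem, with $\Sigma$ isotopic to $\Gamma_\ast$. In the alternative, some $\Gamma_j$ is non-orientable with multiplicity $2$; then one of the neck-pinches on $\Sigma$ pinches off a curve whose normal bundle in the sweepout double-covers $\Gamma_j$, and the resulting surface is isotopic to $\partial N(\Gamma_j)$, giving case (2). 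The non-bumpy case follows by approximating the hyperbolic metric by bumpy metrics $g_j \to g_{\text{hyp}}$, applying the bumpy conclusion for each $g_j$, and extracting a limit, where the index may drop from $1$ to $0$ precisely when a Jacobi field appears, corresponding to the nullity case of Theorem \ref{main}.

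The main obstacle I anticipate is the topological bookkeeping in the third paragraph: given only that $\Sigma$ arises from $\{\Gamma_i\}$ by a sequence of neck-pinches with possibly nested, knotted and linked tubes, one must invoke strong irreducibility together with Haken-style arguments (compressing disks, innermost curve analysis) to collapse the graph of reconstructing tubes into either a single reversed pinch or the one-sided configuration. I expect the generalized Light Bulb Theorem \ref{general} and the cable sliding machinery of Section \ref{sec:cables} to play a role in making these tube manipulations rigorous, much as they do in the interpolation results of Sections \ref{onecomponent} and \ref{multiplecomponents}.
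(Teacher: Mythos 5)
There is a genuine gap in your third paragraph — the central topological claim is incorrect, and the iteration machinery that the paper uses is not a technical convenience but an essential step.

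You want to show that, given the family $\{\Gamma_1,\ldots,\Gamma_k\}$ with index sum $1$ furnished by Theorem~\ref{indexstrong}, strong irreducibility forces $\Sigma$ to be isotopic to the unique index-$1$ component $\Gamma_\ast$. Your argument is that ``otherwise, two nontrivial compressions on opposite sides of $\Sigma$ would be realized by disjoint disks, contradicting strong irreducibility.'' But as the paper observes in its discussion of strong irreducibility (Section~\ref{applications}), strong irreducibility \emph{already} forces every essential neck-pinch in a degeneration sequence $\Sigma_k < \cdots < \Sigma_0 = \Sigma$ to be performed on the same side. So the disks you would like to use do not lie on opposite sides, and no contradiction with strong irreducibility arises. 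It is entirely consistent with strong irreducibility for the first min-max limit $\Gamma_\ast$ to have genus $g' < g$: one-sided compressions of a strongly irreducible splitting in a Haken hyperbolic manifold can produce incompressible positive-genus minimal surfaces, and $\Gamma_\ast$ can be one of them. In that situation $\Gamma_\ast$ is \emph{not} isotopic to $\Sigma$, and no amount of tube-sliding or Light Bulb manipulations will change its genus.

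The paper's proof handles exactly this situation by an iteration. When $\Gamma_\ast$ has genus $g' < g$, one removes the handlebody $Y$ it bounds, minimizes in the isotopy class of $\Gamma_\ast$ in $M\setminus Y$ to produce a strictly stable barrier $\Lambda'_1$, removes the collar to form a manifold $N_1 \subsetneq N_0$ with strictly stable boundary, and re-runs the min-max in $N_1$ using Proposition~\ref{boundary}. Assuming toward contradiction that $M$ has no index-$1$ surface isotopic to $\Sigma$, this recursion produces an infinite nested sequence $N_0 \supsetneq N_1 \supsetneq \cdots$ with strictly stable boundaries of genus $\leq g$; the a priori hyperbolic area bound $|\Sigma_{g'}| < 4\pi(g'-1)$ then makes the boundary areas uniformly bounded, and Proposition~\ref{nested} extracts a convergent subsequence with a nontrivial Jacobi field, contradicting bumpiness (for lens spaces, the termination argument is via the non-collapsing Lemma~\ref{volumelowerbound} instead). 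None of this is captured by the combinatorial tube-reduction you sketch. Your treatment of the multiplicity-$2$ non-orientable case (your case~(2)) and of the non-bumpy limit is essentially the paper's, and your Gauss--Bonnet observations about hyperbolicity are correct and used; but the mechanism by which you conclude that $\Sigma$ is isotopic to an index-$1$ minimal surface does not hold, and must be replaced by the nested iteration of Propositions~\ref{boundary}, \ref{nested}, and \ref{handlebody2}.
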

  
We also  prove Theorem \ref{pr2} under the assumption $M$ is a lens space not equal to $\mathbb{RP}^3$:
\begin{thm}[Heegaard tori in lens spaces]\label{lensspaces}
If $M\neq\mathbb{RP}^3$ is a lens space, then $M$ contains a minimal index $1$ or $0$ torus.  
\end{thm}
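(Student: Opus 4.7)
The plan is to apply Theorem~\ref{indexstrong} to the genus-one Heegaard torus of $L(p,q)$, in close analogy with the argument for Theorem~\ref{rp3}, and to rule out degenerate min-max limits using the topological constraint $L(p,q)\neq\mathbb{RP}^3$ together with the Catenoid Estimate of~\cite{KMN}.

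I would first verify that for $p\geq 3$ the Heegaard torus $T\subset L(p,q)$ is strongly irreducible, since the two solid-torus meridian disks on either side have boundaries whose geometric intersection number on $T$ equals $p\neq 0$, so no pair of compressing disks can be made disjoint. Theorem~\ref{indexstrong} then yields a family $\{\Gamma_1,\ldots,\Gamma_k\}$ of pairwise disjoint minimal surfaces obtained from $T$ by a sequence of neck-pinch surgeries, with $\sum_i\text{index}(\Gamma_i)=1$ in the bumpy case. Combining the Simon-Smith genus bound~\eqref{gb} with the Euler-characteristic shift of $+2$ per neck-pinch, the only possible components are an embedded torus of multiplicity one, some 2-spheres (each arising from a single non-separating neck-pinch on $T$), an embedded $\mathbb{RP}^2$, or a Klein bottle of multiplicity at most two.

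I would then rule out the non-torus possibilities. An embedded $\mathbb{RP}^2$ in an orientable irreducible 3-manifold forces the manifold to be $\mathbb{RP}^3$ by the Projective Plane Theorem, contradicting our hypothesis. The case of a spherical limit is excluded by the Pitts-Rubinstein structural analysis developed in the proof of Theorem~\ref{pr}, applied to our strongly irreducible torus sweepout: any sphere component in the limit must be isotopic to the boundary $\partial N(\mathbb{RP}^2)$ of a tubular neighborhood of a one-sided $\mathbb{RP}^2$, which is again impossible. The remaining case is a multiplicity-two Klein-bottle limit $2K$ with $W=2|K|$ for some stable minimal one-sided Klein bottle $K$; this is ruled out by the Catenoid Estimate of~\cite{KMN} exactly as in the proof of Theorem~\ref{rp3}, which produces a competitor sweepout in the same saturated family with strictly smaller maximum area than $2|K|$, contradicting the equality with $W$.

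Therefore the min-max output must include a torus $\Gamma$ of multiplicity one. Since the neck-pinch sequence from $T$ to $\Gamma$ preserves Euler characteristic, it must be trivial, so $\Gamma$ is isotopic to $T$ and hence is a Heegaard torus of $L(p,q)$; by the index bound it has index $0$ or $1$, and exactly $1$ in the bumpy case. The main obstacle is the Klein-bottle sub-case, which requires the version of the Catenoid Estimate valid for one-sided stable minimal surfaces rather than only two-sided ones; this generalization is precisely what is developed in~\cite{KMN} and used in the proof of Theorem~\ref{rp3}, so no new analytic input is needed beyond combining that tool with the purely topological exclusion of $\mathbb{RP}^2$.
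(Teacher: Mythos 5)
Your approach is genuinely different from the paper's, and unfortunately two of the case-exclusions at its core do not go through as stated. The paper proves Theorem~\ref{lensspaces} in tandem with the Pitts--Rubinstein conjecture, Theorem~\ref{pr2}: it performs iterated min-max in handlebodies (Propositions~\ref{handlebody}--\ref{handlebody2}), removes the region bounded by each low-genus unstable component, and shows the resulting nested sequence of strictly stable boundaries must terminate via the non-collapsing Lemma~\ref{volumelowerbound} and the nesting Proposition~\ref{nested}. Sphere components are thus \emph{not} excluded; they actually arise in intermediate stages of the iteration, are capped off, and the process continues. Your claim that ``any sphere component in the limit must be isotopic to the boundary $\partial N(\mathbb{RP}^2)$'' is false: an index-zero minimal sphere bounding a ball in $L(p,q)$ has nothing to do with $\mathbb{RP}^2$, and nothing in the Pitts--Rubinstein dichotomy says otherwise. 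That dichotomy speaks only about the terminal behaviour of the sweepout, not about individual components appearing along the way, and getting from the intermediate spheres to the desired torus is exactly what the nesting/non-collapsing machinery is for.

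Your treatment of the Klein bottle case also has a gap. The Catenoid Estimate of \cite{KMN} produces a competitor sweepout across a one-sided minimal surface only when that surface is unstable or carries a nontrivial Jacobi field; for a \emph{strictly stable} Klein bottle $K$ in a bumpy metric the estimate gives you nothing. In the $\mathbb{RP}^3$ proof (Theorem~\ref{rp3}) the paper handles the stable case differently, by passing to the double cover and applying the Index Bound there, and reserves the catenoid estimate for the unstable case. In fact, for the torus sweepout the cleaner route to exclude $2K$ is topological: a surface close in the varifold sense to $2K$ with genus at most one must be isotopic to $\partial N(K)$, and $\partial N(K)$ is not isotopic to the Heegaard torus $T$ because $\partial N(K)$ bounds a twisted $I$-bundle on one side whereas $T$ bounds solid tori on both sides. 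Finally, note that Theorem~\ref{pr2} as stated covers hyperbolic manifolds; its validity for lens spaces is not an input you can cite, it is exactly what the combined proof of Theorems~\ref{pr2} and \ref{lensspaces} establishes. What your sketch is missing, in short, is the engine of the paper's argument: the iterated min-max in handlebodies together with Proposition~\ref{nested} and Lemma~\ref{volumelowerbound} to force termination at a surface of the full genus.
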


We will use repeatedly the following essential estimate due to Schoen \cite{s}:
\begin{prop}[Curvature Estimates for Stable Minimal Surfaces]
Let $M$ ba three-manifold.  Then there exists $C>0$ (depending only on $M$) so that if $\Sigma$ is a stable minimal surface embedded in $M$, then
\begin{equation}\label{bounds}
\sup_{x\in\Sigma} |A|^2\leq C.
\end{equation}
\end{prop}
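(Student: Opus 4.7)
The plan is to deduce this estimate via a blow-up argument that reduces the problem to the classical Bernstein-type rigidity theorem for complete stable minimal surfaces in $\mathbb{R}^3$; this is the approach of Schoen in \cite{s}. Since $M$ is compact, we have uniform bounds on the ambient sectional curvature and a positive lower bound on the injectivity radius, which is all the rescaling needs.

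First I would argue by contradiction. Assume no such universal $C$ exists, so there is a sequence of stable minimal surfaces $\Sigma_n \subset M$ with points $x_n \in \Sigma_n$ satisfying $a_n := |A_{\Sigma_n}|(x_n) \to \infty$. After passing to a subsequence we may assume $x_n \to x_\infty \in M$. I would then pass to $g$-normal coordinates around $x_\infty$, writing the ambient metric on a fixed Euclidean ball $B_{r_0}(0)$, and rescale by $a_n$: the pulled-back metrics $\tilde g_n = a_n^2 g$ converge smoothly to the flat metric on balls of radius $a_n r_0 \to \infty$, and the rescaled surfaces $\tilde\Sigma_n$ remain stable minimal with $|A_{\tilde\Sigma_n}|$ normalized to $1$ at the marked point.

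To extract a smooth complete limit I would invoke the stability inequality on each $\tilde\Sigma_n$ in the Schoen--Simon--Yau style: paired with the Michael--Simon--Sobolev inequality, this gives interior $L^p$ bounds on $|A|$ on arbitrarily large balls, hence (by elliptic regularity and uniform local area bounds) smooth subsequential convergence to a complete embedded stable minimal surface $\Sigma_\infty \subset \mathbb{R}^3$ with $|A_{\Sigma_\infty}|(0)=1$. Applying the Fischer-Colbrie--Schoen / do Carmo--Peng theorem, any complete oriented stable minimal surface in $\mathbb{R}^3$ is a plane; in the non-orientable case I would pass to the orientation double cover and apply the same rigidity. This forces $|A_{\Sigma_\infty}|\equiv 0$, contradicting the normalization.

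The delicate part of this plan is controlling the passage to the limit: \emph{a priori} blow-up at high-curvature points could concentrate, and one must rule out accumulation of sheets and pathological convergence. This is exactly what stability buys — without it the blow-up could produce something like a catenoid or worse and the argument would fail. Once uniform curvature and area bounds on the rescaled $\tilde\Sigma_n$ are established, the Bernstein-type rigidity closes the proof and the universal constant $C$ depends only on the ambient curvature and injectivity radius bounds for $M$.
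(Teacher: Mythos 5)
The paper gives no proof: it cites the statement verbatim to Schoen \cite{s}. So there is nothing in the paper itself to compare against; what you are implicitly comparing against is Schoen's original argument, which works intrinsically (conformal coordinates on the surface, the stability inequality coupled with a Simons-type inequality, then an iteration to upgrade integral to pointwise curvature bounds). Your blow-up-to-Bernstein route is the standard modern alternative: it is correct in outline and trades Schoen's direct iteration for the Fischer-Colbrie--Schoen / do Carmo--Peng / Pogorelov rigidity theorem, which of course itself uses stability. Both are legitimate; yours is conceptually cleaner, Schoen's is more self-contained and gives the interior estimate $|A|^2(x)\le C/\mathrm{dist}(x,\partial\Sigma)^2$ in the boundary case.

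Two real (though fixable) flaws in the write-up. First, you place normal coordinates at the limit point $x_\infty$ rather than at the high-curvature points $x_n$. After rescaling by $a_n$, the marked point sits at Euclidean distance $\sim a_n\,d(x_n,x_\infty)$ from the origin, and nothing you have said controls this, so the normalization $|A|=1$ can escape to spatial infinity and the contradiction evaporates. Center the coordinates at $x_n$ for each $n$. Second, you must choose $x_n$ to realize $\sup_\Sigma |A|$ (or run a point-selection lemma); merely picking points where $|A|\to\infty$ only normalizes $|A|$ at one point of $\tilde\Sigma_n$ and gives you no bound elsewhere, so the passage to a smooth limit is not justified as stated. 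Once $x_n$ is chosen maximal, you have $|A_{\tilde\Sigma_n}|\le 1$ globally with equality at the origin, and then the smooth subconvergence is elementary (uniform local graph representation plus Schauder), with no need for the Michael--Simon/Schoen--Simon--Yau integral estimates at all. The appeal to those estimates in your sketch is a red herring unless you are deliberately avoiding the maximal choice, in which case there is a genuine gap between $L^p$ control of $|A|$ and the pointwise control needed to extract a classical limit. Your remark that ``stability rules out a catenoid limit'' is also slightly off target: nothing in the extraction step excludes unstable configurations; stability is used only at the very end, to invoke the Bernstein-type classification.
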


We will need the following non-collapsing lemma:
\begin{lemma}[Non-collapsing]\label{volumelowerbound}
Let $M$ be a closed Riemannian three-manifold.  For all $C>0$ there exists $\varepsilon(C,M)>0$ so that if $\Sigma\subset M$ is a closed embedded two-sided minimal surface bounding a region $H$ that is not a twisted $I$-bundle over a non-orientable surface and satisfying
\begin{equation}
\sup_{x\in\Sigma} |A|\leq C, 
\end{equation}
then the volume of $H$ is at least $\varepsilon(C,M)$.
\end{lemma}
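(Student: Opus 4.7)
The plan is to argue by contradiction via a smooth compactness argument for minimal surfaces with bounded second fundamental form. Suppose the lemma fails: there is a constant $C>0$ and a sequence of closed embedded two-sided minimal surfaces $\Sigma_i \subset M$, each with $\sup_{\Sigma_i}|A|\leq C$, bounding a region $H_i$ which is not a twisted $I$-bundle over a non-orientable surface, and with $\mathrm{Vol}(H_i)\to 0$. The uniform curvature bound, combined with local monotonicity on the compact manifold $M$, gives uniform local area bounds, so by the standard smooth compactness theorem for minimal surfaces with bounded $|A|$, a subsequence of $\Sigma_i$ converges smoothly, with some positive integer multiplicity $m$, to a smooth closed embedded minimal surface $\Sigma_\infty\subset M$.

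The first substep is to show $m\geq 2$: if $m=1$, then $\Sigma_i\to\Sigma_\infty$ as a single graphical sheet and $H_i$ converges in the Hausdorff sense to the closure of one of the open components of $M\setminus \Sigma_\infty$, which has positive volume as a nonempty open subset of $M$, contradicting $\mathrm{Vol}(H_i)\to 0$. The next step is a case analysis according to whether $\Sigma_\infty$ is one-sided or two-sided in $M$. If $\Sigma_\infty$ is one-sided, then a small tubular neighborhood $T_\delta$ of $\Sigma_\infty$ is a twisted $I$-bundle whose boundary is the connected orientation double cover of $\Sigma_\infty$; the only way a closed two-sided surface contained in a small tubular neighborhood of $\Sigma_\infty$ can converge to it with multiplicity $m\geq 2$ (the boundary being two-sided forces $m=2$) is by being isotopic to $\partial T_{\delta_i}$ for some $\delta_i\to 0$. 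Since $\mathrm{Vol}(H_i)\to 0$ rules out $H_i$ being the complement (whose volume approaches $\mathrm{Vol}(M)$), we must have $H_i = T_{\delta_i}$, making $H_i$ a twisted $I$-bundle over the non-orientable surface $\Sigma_\infty$, in direct contradiction with the standing hypothesis.

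In the remaining case where $\Sigma_\infty$ is two-sided, the $m\geq 2$ graphical sheets of $\Sigma_i$ over $\Sigma_\infty$ are globally distinguishable (because a two-sided $\Sigma_\infty$ has a globally well-defined unit normal), so $\Sigma_i$ splits into $m$ disjoint connected components, each a parallel copy of $\Sigma_\infty$. Under the implicit assumption used in the intended application — namely that $H_i$ is a connected region with connected boundary, as in a Heegaard handlebody — this case cannot occur and the contradiction is complete. The main obstacle lies precisely in this two-sided case: two parallel minimal surfaces bounding a thin product $I$-bundle are not excluded on purely local grounds (as witnessed by slabs in $S^2\times S^1$), so one must use either the connectedness of $\partial H_i$ or, more intrinsically, the positive Jacobi field on $\Sigma_\infty$ obtained by renormalizing the difference of the two graph functions of the parallel sheets — a rigidity constraint that is excluded in the topological settings of interest in the paper (hyperbolic manifolds, lens spaces, and the configurations arising from strongly irreducible Heegaard splittings).
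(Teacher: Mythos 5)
The key premise of your argument is false and leaves the main case uncovered. You assert that ``the uniform curvature bound, combined with local monotonicity on the compact manifold $M$, gives uniform local area bounds,'' and then invoke smooth compactness with finite multiplicity. Monotonicity of the density ratio gives a \emph{lower} bound on area in small balls, not an upper bound; the curvature bound $|A|\leq C$ controls the geometry of each local graphical sheet but says nothing about how many sheets can pass through a given ball. Without an a priori area bound, a sequence of minimal surfaces with $|A|\leq C$ converges (after passing to a subsequence) only to a \emph{minimal lamination}, possibly with uncountably many leaves, and there is no finite multiplicity $m$. This unbounded-area regime is precisely the main case in the paper's proof. There one covers $M$ by balls in which the lamination is trivialized as $D^2\times K$, observes that alternating slab regions between consecutive graphical sheets of $\Sigma_i$ lie inside $R_i$ and hence have vanishing volume, and uses this to build a smooth two-to-one retraction of $\Sigma_i$ onto an embedded surface $\Gamma$; the orientability obstruction then forces $\Gamma$ to be non-orientable and $R_i$ to be a twisted $I$-bundle, contradicting the hypothesis. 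None of this appears in your write-up, so the proof as written establishes nothing in the case where the areas of $\Sigma_i$ are unbounded.

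A secondary issue: in your two-sided $\Sigma_\infty$ case you dispose of the problem by appealing to an ``implicit assumption used in the intended application,'' namely that $H_i$ is connected with connected boundary. That assumption is not in the statement of the lemma, so you cannot invoke it without either adding it as a hypothesis or proving it is unnecessary. It is true that the paper's own retraction step implicitly uses connectedness of $\Sigma_i$ (``a connected surface cannot retract to a closed surface $\Gamma$ smoothly with multiplicity two unless $\Gamma$ is non-orientable''), so you have correctly located a subtlety, but acknowledging ``the main obstacle lies precisely in this two-sided case'' while resolving it only by appeal to context does not constitute a proof. The bounded-area portion of your argument, where you do have smooth convergence with finite multiplicity, is in line with the first paragraph of the paper's proof; the rest needs to be replaced by the lamination argument.
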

\begin{rmk}
Note that the assumption on the topology of $H$ is essential.  In $\mathbb{RP}^3$ one can easily find metrics in which a sequence of stable two-spheres converges smoothly with bounded curvature to $\mathbb{RP}^2$ with multiplicity $2$. These stable two spheres bound a twisted interval bundle about $\mathbb{RP}^2$ with volumes approaching zero. Moreover, the assumption of bounded curvature is essential, as the doublings of the Clifford torus give an example of a sequence of minimal surfaces bounding volumes approaching zero. 
\end{rmk}
\begin{proof}
Suppose the lemma fails.  Thus for some $C>0$ there is a sequence $\Sigma_i$ of embedded minimal surfaces satisfying
\begin{equation}\label{curvbound}
\sup_{i\in\mathbb{N}} \sup_{x\in\Sigma_i} |A|\leq C, 
\end{equation}
where $R_i$ is a handlebody bounded by $\Sigma_i$ with $\mbox{vol}(R_i)\rightarrow 0$.

If the area of $\Sigma_i$ are uniformly bounded, then from the curvature bound we obtain that $\Sigma_i$ converges with multiplicity one to a closed embedded minimal surface $\Sigma$ or else with multiplicity $2$ to a non-orientable surface $\Gamma$ that is a one-sided Heegaard splitting.  
In the first case, since $\Sigma$ bounds a definite volume on both sides, the smooth convergence implies that $\Sigma_i$ do as well.  In the second case we still have $\mbox{vol}(R_i)\rightarrow\mbox{vol}(M\setminus\Gamma)>0$.  

Thus we can assume that the areas of $\Sigma_i$ are unbounded.  Again because of the curvature bound \eqref{curvbound}, upon passing to a subsequence, we can assume $\Sigma_i$ converges to a smooth minimal lamination $\mathcal{L}$.  

It follows that one can find a covering $\{B_j\}$ of $M$ with the property that in any ball, there's a diffeomorphism $\phi_j:B_j\rightarrow D^2\times [0,1]$ so that $\phi_j(\mathcal{L})$ consists of $D^2\times K$, where $K$ is a closed subset of $[0,1]$.  For each $i$, we can consider $\phi_j(\Sigma_i)$ which consist of a disjoint union of several graphs $G^{i,j}_1,..., G^{i,j}_{r_i}$ (where, potentially $r_i\rightarrow\infty$ as $i\rightarrow\infty$).  Let $A^{i,j}_k$ denote the region between $G^{i,j}_k$ and $G^{i,j}_{k+1}$.  Note that $A^{i,j}_k\subset R_i$ for all $k$ odd.  Moreover, since $\mbox{vol}(R_i)\rightarrow 0$, it follows that $\mbox{vol}(A^{i,j}_k)\rightarrow 0$ for $k$ odd.  Thus we obtain that
\begin{equation}
|G^{i,j}_{k}-G^{i,j}_{k+1}|\rightarrow 0 \mbox{ as } i\rightarrow\infty.
\end{equation}
We can find a graph $G^{i,j}_{k+1/2}$ between $G^{i,j}_{k}$ and $G^{i,j}_{k+1}$ and thus in $B_i$ we can retract $\Sigma_i$ to smoothly with multiplicity $2$, to the graph $G^{i,j}_{k+1}$.   Since we have such retractions in all balls $B_i$, by gluing these together we obtain a retraction of $\Sigma_i$ onto a closed embedded surface $\Gamma$.   But a connected surface cannot retract to a closed surface $\Gamma$ smoothly with multiplicity two unless $\Gamma$ is non-orientable.  Thus we obtain that $R_i$ is homeomorphic to a twisted interval bundle over $\Gamma$, contradicting the assumption on $R_i$.
This is a contradiction.
\end{proof}

We have the following finiteness statement for nested minimal surfaces:
\begin{prop}[Smooth Convergence of Nested Stable Minimal Surfaces]\label{nested}
Let $M$ be a three manifold with boundary $X$, a stable, minimal genus $g$ surface.  Suppose $M$ is not homeomorphic to a twisted interval bundle over a non-orientable surface. 
   Let $\{X_i\}_{i\in\mathbb{N}}$ be a sequence of stable minimal surfaces isotopic to $X$ so that each $X_i$ bounds $X\times [0,1]$ on one side and $H_i:=M\setminus (X\times [0,1])$ on the other.  Suppose $\{X_i\}_{i\in\mathcal{I}}$ are nested in the sense that 
\begin{equation}
H_i\subsetneq H_j\mbox{ whenever } i>j.  
\end{equation}
Then the areas of $X_i$ are uniformly bounded and thus some subsequence obtained from $\{X_i\}_{i\in\mathbf{N}}$ converges to a minimal surface of genus $g$ with a non-trivial Jacobi field.

If $M$ is endowed with a bumpy metric, no such infinite sequence $\{X_i\}_{i\in\mathbf{N}}$ can exist.
\end{prop}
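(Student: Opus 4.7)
The plan is to combine Schoen's curvature estimate with the nested volume control and standard smooth compactness for stable minimal surfaces, producing multiplicity-one smooth convergence to a limit minimal surface of genus $g$, from which a non-trivial Jacobi field is then extracted. First, Schoen's estimate \eqref{bounds} applied to each stable $X_i$ gives $\sup_i\sup_{X_i}|A|^2\leq C(M)$, and the nestedness $H_{i+1}\subsetneq H_i$ makes the regions $A_i := H_i\setminus H_{i+1}$ pairwise disjoint subsets of $M$, so $\sum_i\vol(A_i)\leq\vol(M)<\infty$ and in particular $\vol(A_i)\to 0$.

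To establish the uniform area bound and smooth convergence, I would invoke standard compactness for stable embedded minimal surfaces of fixed genus with a uniform second fundamental form bound in the compact manifold $M$: after passing to a subsequence, $X_{i_k}\to X_\infty$ smoothly on compact subsets with some multiplicity $m\geq 1$. To see that $m=1$, observe that for $k$ large both $X_{i_k}$ and $X_{i_k+1}$ lie in a fixed tubular neighbourhood $N$ of $X_\infty$, inside which each is realized as an $m$-valued graph over $X_\infty$. Hence $A_{i_k}\cap N$ is locally a disjoint union of $m$ thin slabs. On the other hand, since both $X_{i_k}$ and $X_{i_k+1}$ are isotopic to $X=\partial M$ and, by hypothesis, $M$ is not a twisted $I$-bundle over a non-orientable surface, the two surfaces cobound a connected product region $A_{i_k}\cong X\times[0,1]$. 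A connected region cannot be a disjoint union of $m\geq 2$ thin slabs, so $m=1$. Thus $X_\infty$ is a stable embedded minimal surface of genus $g$, $|X_{i_k}|\to|X_\infty|<\infty$, and the same argument applied to an arbitrary subsequence of $\{X_i\}$ gives $\sup_i|X_i|<\infty$.

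To produce the non-trivial Jacobi field, I would write $X_{i_k}$ as a normal graph $u_k\nu$ over $X_\infty$ for large $k$ and normalize $\tilde u_k := u_k/\|u_k\|_{C^2(X_\infty)}$. The quasilinear minimal surface equation satisfied by $u_k$ linearizes at $u=0$ to the Jacobi equation $Lu=0$ on $X_\infty$; elliptic estimates allow passage to a subsequential $C^2$-limit $u_\infty$ with $\|u_\infty\|_{C^2}=1$ and $Lu_\infty=0$, giving the desired non-trivial Jacobi field on $X_\infty$. Nontriviality comes from the normalization together with the fact that the $X_i$ are pairwise distinct, so that $\|u_k\|_{C^2}>0$ and the normalization is well-defined.

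Finally, under the bumpy metric hypothesis $X_\infty$ admits no non-trivial Jacobi field by definition, so the preceding paragraph yields an immediate contradiction, ruling out any infinite nested sequence $\{X_i\}_{i\in\mathbb{N}}$. The main technical obstacle in this plan is excluding multiplicity $m\geq 2$ in the area-bound step; this combines the connectedness of the product region $A_i$ cobounded by the nested $X_i$ and $X_{i+1}$ with the hypothesis that $M$ is not a twisted $I$-bundle over a non-orientable surface, and it is precisely here that the topological assumption on $M$ enters the argument.
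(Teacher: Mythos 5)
Your proof has a genuine gap in the compactness step. You invoke ``standard compactness for stable embedded minimal surfaces of fixed genus with a uniform second fundamental form bound'' to obtain, after passing to a subsequence, smooth convergence $X_{i_k}\to X_\infty$ to a single compact surface with some finite multiplicity $m$. But multiplicity-$m$ convergence to a compact limit surface requires an a priori area bound, which is exactly what the proposition is trying to establish; this is circular. A curvature bound alone (Schoen) only gives subsequential convergence to a minimal \emph{lamination}. When the areas are unbounded --- and the paper explicitly points out, citing Colding--Minicozzi and Dean and Kramer, that stable minimal surfaces of fixed genus can have unbounded area, which is precisely why the nestedness hypothesis is indispensable --- the number of local sheets in a fixed ball tends to infinity, the lamination limit need not be a single compact leaf, and the surfaces $X_{i_k}$ need not lie in a tubular neighbourhood of any fixed $X_\infty$. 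So the objects you name (the neighbourhood $N$, the $m$-valued graph structure, the decomposition of $A_{i_k}\cap N$ into $m$ slabs) need not exist, and your $m=1$ argument never gets off the ground.

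The paper's proof avoids this by never assuming convergence to a single surface with multiplicity. It passes to the lamination limit of the two-leaf laminations $\mathcal{L}_i = X_i\cup X_{i+1}$ --- available with no area control --- and then uses two inputs: first, by the non-collapsing Lemma \ref{volumelowerbound} and the disjointness of the regions $R_i$ between consecutive surfaces, $\mathrm{vol}(R_i)\to 0$; second, in laminar coordinate balls the vanishing volume forces the slabs of $R_i$ to be thin, and a retraction argument (retract type-(b) slabs onto their $X_i$-wall, type-(c) slabs onto a middle graph) yields a smooth retraction of $X_{i+1}$ whose image must, by smoothness and closedness, be all of $X_i$. Hence for $i$ large $X_{i+1}$ (and in fact every $X_j$ with $j>i$) is a global normal graph over a \emph{fixed} $X_i$, and the area bound follows. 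Your connectedness-of-$A_{i_k}$ observation is in the same spirit as the paper's retraction, but it only rules out $m\geq 2$ once a finite multiplicity has already been granted; it does not supply the area bound that justifies speaking of multiplicity in the first place. The remainder of your argument (the normal-graph normalization producing a Jacobi field, and the bumpy-metric contradiction) is fine given multiplicity-one smooth convergence, and agrees with the paper.
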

The nestedness assumption is key in Proposition \ref{nested}.  Colding-Minicozzi have constructed \cite{CM} examples of stable tori without any area bound (later B. Dean \cite{D} found examples of any genus greater than $1$, and J. Kramer \cite{Kr} found examples of stable spheres).

\begin{rmk}
The proof of Proposition \ref{nested} is related to an idea due to M. Freedman and S.T. Yau toward proving the Poincare conjecture.  If one had a counterexample to the Poincare conjecture, the sketch was to endow it with a bumpy metric and then using an (as yet conjectural) min-max process to produce an infinite sequence of minimal embedded nested two-spheres.  Proposition \ref{nested} then leads to a contradiction.
\end{rmk}
\begin{proof}

Since $X_i$ do not bound a twisted $I$-bundle over a non-orientable surface, it follows from Lemma \ref{volumelowerbound} that there exists $\varepsilon_0>0$ so that $\mbox{vol}(H_i)\geq \varepsilon_0>0$ for all $i$.

For each $i\geq 1$, denote $R_i=H_{i+1}\setminus H_{i}$.   Note that as 
\begin{equation}
\bigcup_{i=1}^\infty R_i\subset H_1,
\end{equation}
is a disjoint decomposition, it follows that
\begin{equation}
\mbox{vol}(R_i)\rightarrow 0 \mbox{ as } r\rightarrow 0.
\end{equation}

For each $i\geq 1$, because $X_i$ and $X_{i+1}$ are nested we can define the lamination $\mathcal{L}_i$ to consist of two leafs, $X_i$ and $X_{i+1}$.  
By the curvature bounds for stable minimal surface \eqref{bounds}  it follows that $\mathcal{L}_i$ subconverge to a lamination $\mathcal{L}$.

From the definition of lamination, we can cover $M$ by finitely many open balls $\{B_i\}_{i=1}^p$ so that on any ball $B_i$, after applying a diffeomorphism $\phi^i:B_i\rightarrow D^2\times [0,1] \subset\mathbb{R}^3$, where $D^2$ is the unit disk in the $xy$-plane.

For $i$ large, in each ball $B_k$,  $\mathcal{L}_i|_{B_k}$ consists of several parallel graphs $\{G^{i,k}_j\}_{j=1}^{r}$, where $r$ depends on $i$ and $k$.

The region $R_i$ restricted to $B_k$ consists of several slab regions between consecutive graphs from $\{G^{i,k}_j\}_{j=1}^{r}$.   Let $R_i^k(s,s+1)$ denote 
such a slab region between $G^{i,k}_s$ and $G^{i,k}_{s+1}$ in $B_k$.  Slab regions come in three possible types: (a) both boundary walls $G^{i,k}_s$ and $G^{i,k}_{s+1}$ are in $X_i$, (b) one boundary wall is in $X_i$ and the other in $X_{i+1}$, or (c) both boundaries in $X_{i+1}$. 

For each slab $R_i^k(s,s+1)$ of type (b) there's a well defined retraction of $R_i^k(s,s+1)$ onto the wall in $X_i$.  For a slab $G^{i,k}_s$ of type (c), we can retract the slab onto a graph $\tilde{G}$ in between $G^{i,k}_s$ and $G^{i,k}_{s+1}$.

Note that there must be some points of $X_{i+1}$ which are retracted to $X_{i}$
(otherwise, all points of $X_{i+1}$ are of type (c) and thus we obtain that $X_{i+1}$ bounds a set of tiny volume, contradicting the previous lemma).

In this way, we obtain a smooth retraction of $X_{i+1}$ onto $X'_i\cup \tilde{X}$, where $\tilde{X}$ are the union of graphs arising in case (c) and $X'_i$ denotes a subset of $X_i$.  But since the retraction is smooth, in fact $\tilde{X}$ is empty and $X'_i=X_i$.

Thus we obtain that $X_{i+1}$ is a normal graph over $X_i$ for $i$ large.  In fact, the same argument shows that $X_j$ is a normal graph over $X_i$ for any $j>i$.  It follows that the areas of $X_i$ are uniformly bounded.  

\end{proof}

We can now prove the following Proposition:
\begin{prop}[Min-max in Handlebody]\label{handlebody}
Let $H$ be a Riemannian handlebody with strictly stable boundary, endowed with a bumpy metric.  Then there exists a minimal surface $\Sigma$ of index $1$ or $0$ obtained after finitely many neck-pinch surgeries performed on $\partial B$.   Thus the genus of $\Sigma$ is at most the genus of $\partial H$.
\end{prop}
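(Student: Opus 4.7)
The plan is to apply Simon--Smith min-max to sweepouts of $H$ isotopic to $\partial H$, then iteratively apply the Deformation Theorem of Section \ref{deformation1} together with the bumpy-metric finiteness from Proposition \ref{nested} to obtain a min-max limit with an unstable component of index~$1$.

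First I would set up the min-max problem. Let $\Lambda_H$ be the saturation of the family of sweepouts $\{\Sigma_t\}_{t\in[0,1]}$ of $H$ with $\Sigma_0 = \partial H$, $\Sigma_1$ a spine of the handlebody, and $\Sigma_t$ smoothly embedded and isotopic to $\partial H$ for $t \in (0,1)$; denote the corresponding width by $W$. The first thing to establish is $W > |\partial H|$: if equality held, then following the proof of Proposition \ref{boundary}, one could use Lemma \ref{contracting_hair} to pull a minimizing sequence into a tubular neighborhood of $\partial H$ and then apply the squeezing map $P_t$ of Section \ref{sec:squeezing}. Lemma \ref{properties} together with the Constancy Theorem would force convergence with multiplicity one to $\partial H$, contradicting strict stability of $\partial H$ via Lemma \ref{isisolated}.

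Having $W > |\partial H|$, the Simon--Smith theorem (Theorem \ref{ss}) produces a min-max sequence converging to $\sum_i n_i \Gamma_i$ supported in the interior of $H$ and satisfying the genus bound \eqref{gb} relative to $g(\partial H)$. The bumpy hypothesis, the curvature estimate for stable minimal surfaces, and Proposition \ref{nested} together imply that there are only finitely many stationary integral varifolds of mass at most $W$ whose support is a strictly stable closed embedded minimal surface in $H$. I would then apply the Deformation Theorem \ref{deformation} (adapted from the $S^3$/lens space setting to the handlebody setting) successively to each such strictly stable configuration, preserving the condition $\limsup_i \sup_t |\Psi_i(t)| \leq W$ and ensuring that no min-max subsequence from the new family converges to a strictly stable limit. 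The finiteness above guarantees that the iteration terminates.

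Any min-max sequence from the resulting family must then converge to a stationary varifold with at least one unstable component. Combined with the upper index bound (Theorem \ref{mn}, adapted to handlebodies), the total index is exactly $1$, so the limit has precisely one index~$1$ component and the rest strictly stable of index~$0$. Taking $\Sigma$ to be this configuration yields $g(\Sigma) \leq g(\partial H)$ via the Simon--Smith genus bound, and the neck-pinch characterization of topological degenerations from \cite{k} shows that $\Sigma$ arises from $\partial H$ by finitely many neck-pinch surgeries. The main obstacle is that the Deformation Theorem \ref{deformation} and the upper index bound are stated for closed $3$-manifolds, so I need to verify that the interpolation machinery of Sections \ref{interpolation}--\ref{multiplecomponents} and the squeezing/pull-tight arguments respect the fixed strictly stable boundary $\partial H$; strict stability of $\partial H$ is exactly what allows these arguments to effectively ignore $\partial H$ when performed in the interior of $H$.
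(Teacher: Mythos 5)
Your overall setup (min-max on sweepouts of $H$, the argument that $W > |\partial H|$ via the squeezing map and the constancy theorem, and the invocation of the Simon--Smith genus bound and Marques--Neves upper index bound) matches the paper's. However, you are trying to prove a stronger conclusion than Proposition~\ref{handlebody} asserts. The proposition claims only index $1$ or $0$; the paper's proof is correspondingly short and makes no claim of ruling out stable limits in the higher-genus case. The upgrade to index exactly $1$ is the content of Proposition~\ref{handlebody2}, which the paper obtains not by iterating the Deformation Theorem but by \emph{nesting handlebodies}: if the min-max limit is stable, run min-max again inside the smaller handlebody it bounds, and invoke Proposition~\ref{nested} to see the nesting terminates.

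The genuine gap is your reliance on iterating Theorem~\ref{deformation} to avoid all strictly stable configurations. That theorem, and in particular Proposition~\ref{joining two sides} on which it depends (gluing together two isotopies that independently deform two nearby slices to the canonical form near a stable multiplicity configuration), is established \emph{only} for sweepouts by two-spheres or tori in $S^3$ or a lens space. For a handlebody of genus $\geq 2$ the crucial interpolation step has not been proved, and the difficulty is not merely that the argument needs to ``respect the boundary'' as you suggest --- the genus restriction is a substantive limitation of the paper's interpolation machinery. The paper sidesteps this entirely: it invokes the Deformation Theorem only in the case $\partial H \cong S^2$ (where sweepouts are by spheres and the machinery applies), and for $\partial H$ of genus $g \geq 1$ it instead observes that the genus bound~\eqref{gb} rules out the limit being $n\,\partial H$ with $n \geq 2$ (since $ng \leq g$ forces $n=1$), combined with $W > |\partial H|$ which rules out $n = 1$. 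No lower index bound is claimed for the higher-genus case at this stage, and that is what allows the proof of Proposition~\ref{handlebody} to be short and avoid the general-genus interpolation problem.
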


\begin{proof}
We consider sweep-outs of $H$ beginning at $\partial H$ and ending at the one-dimensional spine of $H$.  We then consider the associated min-max problem and its width.  By Proposition \ref{boundary}, since $\partial H$ is strictly stable we obtain that the width of the handlebody $H$ is strictly larger than the area of $\partial H$. 

If $\partial H$ is diffeomorphic to $S^2$, then by the Deformation Theorem \ref{deformation}
the min-max limit can be realized by an unstable surface in the interior of $H$.
Otherwise, by the genus bounds, the minimal surface realizing the width cannot be equal to the boundary surface $\partial H$ obtained with some multiplicity.  Moreover, by the upper index estimates of Marques-Neves (Deformation Theorem A), we can guarantee that the min-max limit has index at most $1$.
\end{proof}

If we are in the case of obtaining a stable minimal surface $\Sigma$ in Proposition \ref{handlebody}, then we can apply Proposition \ref{handlebody} iteratively to the handlebody bounded by $\Sigma$ inside of $H$.  By the Nesting Proposition \ref{nested}, there can only be finitely many iterations and thus we obtain the following improvement on Proposition \ref{handlebody} which rules out obtaining a stable minimal surface:

\begin{prop}[Iterated Min-max in Handlebody]\label{handlebody2}
Let $H$ be a Riemannian handlebody with strictly stable boundary, endowed with a bumpy metric.  Then there exists a minimal surface $\Sigma$ of index $1$ obtained from $\partial H$ after finitely many neck-pinch surgeries performed.   Thus the genus of $\Sigma$ is at most the genus of $\partial H$.
\end{prop}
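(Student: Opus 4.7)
The plan is to iterate Proposition \ref{handlebody} and invoke the Nesting Proposition \ref{nested} to force termination. Setting $H_0 := H$ and $\Sigma_0 := \partial H$, I would apply Proposition \ref{handlebody} to $H_0$ to produce a minimal surface $\Sigma_1$ of index $0$ or $1$, obtained from $\Sigma_0$ via finitely many neck-pinches; if the index is $1$, we are done. Otherwise $\Sigma_1$ is stable, and since the metric is bumpy it carries no nontrivial Jacobi field and is therefore strictly stable. Each component of $\Sigma_1$ bounds an inner sub-handlebody, and I let $H_1$ be the disjoint union of these inner handlebodies. By Proposition \ref{boundary}, the width strictly exceeds the area of $\partial H_0$, which together with the genus bound \eqref{gb} of Theorem \ref{ss} prevents the min-max limit from being $\partial H_0$ itself (even counted with multiplicity), so $H_1 \subsetneq H_0$. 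I then repeat the procedure on each component of $H_1$.

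Assume toward contradiction that the iteration never produces an index $1$ surface. Then one obtains an infinite sequence $\Sigma_0, \Sigma_1, \Sigma_2, \ldots$ of strictly stable minimal surfaces bounding strictly nested inner regions $H_{k+1} \subsetneq H_k$. The genus bound \eqref{gb}, applied at each iteration, shows that the total genus of $\Sigma_k$ is bounded above by $g(\partial H)$ and non-increasing in $k$. Following a single component branch — fix a component $\gamma_{k_0} \subset \Sigma_{k_0}$ at some initial step, and at each subsequent step choose a component of $\Sigma_k$ lying inside the handlebody bounded by the previously chosen component — the genus within the branch stabilizes after finitely many steps, after which consecutive surfaces in the branch are mutually isotopic. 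This produces a strictly nested infinite sequence of isotopic strictly stable minimal surfaces inside a fixed handlebody.

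Since a handlebody is orientable with free fundamental group, it cannot be a twisted $I$-bundle over a non-orientable surface, so Proposition \ref{nested} applies; the bumpy hypothesis then forbids any such infinite nested sequence, giving the desired contradiction. Hence the iteration terminates in finitely many steps with a min-max limit $\Sigma$ of index $1$, obtained from $\partial H$ via the accumulated union of all neck-pinches performed along the way. The genus bound on $\Sigma$ follows since each application of Proposition \ref{handlebody} preserves or decreases total genus relative to $\partial H$.

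The main obstacle I anticipate is handling disconnectedness carefully: when $\Sigma_k$ splits into multiple components (possibly via separating neck-pinches that preserve total genus while increasing the number of components), one must commit to tracking the iteration down a single component branch, and argue that within such a branch the topology eventually stabilizes so that Proposition \ref{nested} — which requires a sequence of mutually isotopic surfaces inside a fixed ambient region — is applicable. A secondary subtlety is the strict nesting $H_{k+1} \subsetneq H_k$ at every step, which relies both on the width gap of Proposition \ref{boundary} and on the genus-based exclusion of $\partial H_k$ (with any multiplicity) as a possible min-max limit, as in the proof of Proposition \ref{handlebody}.
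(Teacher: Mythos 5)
Your proposal follows essentially the same route as the paper: iterate Proposition \ref{handlebody} to obtain a nested family of strictly stable minimal boundaries and invoke the Nesting Proposition \ref{nested} to force termination. The paper records only a one-line sketch for this step and leaves implicit both the branching of the iteration (since each $\Sigma_k$ may disconnect) and the observation that surfaces along an infinite branch must eventually become mutually isotopic so that the hypotheses of Proposition \ref{nested} are met; your account supplies both (extracting an infinite branch K\"onig-style and noting that once genus stabilizes the surgeries along that branch can only split off spheres, leaving the tracked component unchanged up to isotopy), so it is if anything a more careful version of the same argument.
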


Note that Proposition \ref{handlebody2} implies the Index Bound Theorem \ref{indexstrong} as if we obtain a minimal surface that is strictly stable, Proposition \ref{handlebody2} implies the existence of an index $1$ surface inside each handlebody.

\subsection{Strong Irreducibility}

Let us recall some basic fact about strongly irreducible Heegaard splittings $\Sigma$.  

Let $\Sigma$ be a Heegaard surface in $M$.  By definition, this means $M\setminus\Sigma$ consists of two open handlebodies, $H_1$, and $H_2$.

Let $\gamma$ be a simple closed curve on $\Sigma$.  We say $\gamma$ is a \emph{compressing curve} if it bounds an embedded disk $D$ with $\partial D=\gamma$ whose interior is contained in $H_1$ or $H_2$.   We call $D$ the \emph{compressing disk} bounded by $\gamma$.   There are three types of compression curves:
\begin{enumerate}
\item $\gamma$ bounds no disk on $\Sigma$ and bounds a disk in $H_1$
\item $\gamma$ bounds no disk on $\Sigma$ and  bounds a disk in $H_2$
\item $\gamma$ bounds a disk in $\Sigma$ isotopic to its compressing disk in either $H_1$ or $H_2$
\end{enumerate}

In the third case, let us say that $\gamma$ bounds an \emph{inessential disk} and the compression is inessential.   In the first and second cases, let us say $\gamma$ bounds an \emph{essential} disk in $H_1$ or $H_2$, respectively..

A Heegaard surface is \emph{strongly irreducible} if every essential compressing disk in $H_1$ intersects every essential compressing disk in $H_2$.

Given a compressing curve on $\Sigma$, we can perform a neck-pinch surgery on $\Sigma$ along $\gamma$ to produce a new surface, $\Sigma'$.  Let us write $\Sigma'<\Sigma$ in this case.  Note that we have $\mbox{genus}(\Sigma')\leq\mbox{genus}(\Sigma)$.   In fact, whenever $\Sigma$ is the boundary of a handlebody and $\Sigma'<\Sigma$ precisely one of the following holds:
\begin{enumerate}
\item  $\mbox{genus}(\Sigma')=\mbox{genus}(\Sigma)-1$ 
\item  $\mbox{genus}(\Sigma')=\mbox{genus}(\Sigma)$ and the number of connected components of $\Sigma'$ is one more than $\Sigma$. 
\end{enumerate}
Because of strongly irreducibility, every compressing curve bounding a disk in $H_1$ intersects every such curve in $H_2$.  It follows that if we perform an $H_i$-compression (for $i=1$ or $i=2$) on $\Sigma$ to obtain a surface $\Sigma'$, then any further compression on $\Sigma'$ must be performed on the same side, except for inessential compressions which can happen on either side.  

Suppose we perform an essential neck-pinch on a strongly irreducible Heegaard splitting of genus $g$, $\Sigma_g$ to obtain $\Sigma_g'$.  It is possible that the genus of $\Sigma'_g$ is one less than $\Sigma_g$, and, while $\Sigma'_g$ bounds on one side a genus $g-1$ handlebody, on the other side it bounds a twisted interval bundle over a non-orientable surface $\Gamma$.  If such a neck-pinch exists, then $\Gamma$ is an incompressible non-orientable minimal surface.  Moreover, $\Gamma$ is known as a one-sided Heegaard splitting, since $M\setminus\Gamma$ is a handlebody of genus $g-1$.

The simplest example of this phenomenon arisees from the genus $1$ Heegaard splitting of $\mathbb{RP}^3$.  After a neck-pinch performed on a Heegaard torus, one obtains a three ball that bounds an interval bundle over $\mathbb{RP}^2$. 

Let us summarize this discussion in the following dichotomy for surgeries performed on a strongly irreducible splitting.  Suppose $\Sigma_k<\Sigma_{k-1}<...< \Sigma_0$ where $\Sigma_0$ is strongly irreducible.  Then one of the following holds:
\begin{enumerate}
\item  For each $j\in\{0,,,,k-1\}$, $\Sigma_j$ is obtained from an essential surgery on $\Sigma_{j-1}$ performed on the $H_1$ side or an inessential surgery splitting off a two-sphere
\item  For each $j\in\{0,,,,k-1\}$, $\Sigma_j$ is obtained from an essential surgery on $\Sigma_{j-1}$ performed on the $H_2$ side or an inessential surgery splitting off a two-sphere
\end{enumerate}

We have the following further dichotomy: 
\begin{enumerate}[label=(\alph*)]
\item $\Sigma_1$ bounds a twisted interval bundle over an incompressible one sided Heegaard surface and $\Sigma_j$ for $j>1$ are obtained from $\Sigma_{j-1}$ by inessential compressions or else
\item  for any $j\in\{1,2,...,k\}$  each non-sphere component of $\Sigma_j$ is incompressible in the manifold $M\setminus\mbox{int}(\Sigma_j)$, where $\mbox{int}(\Sigma_j)$ denotes the interior of the handlebody determined by $\Sigma_j$.   For any sphere component $\Sigma_j$, the infimal area of a surface isotopic to $\Sigma_j$ in $M\setminus\mbox{int}(\Sigma_j)$ is positive.
\end{enumerate}
If we are in case (a) we can minimize area in the isotopy class of the one sided Heegaard surface by a theorem of Meeks-Simon-Yau (\cite{MSY}) to obtain a stable embedded non-orientable minimal surface.  
\\
\\
\noindent

\emph{Proof of Theorems \ref{pr2} and \ref{lensspaces}:}
Let us first assume that $M$ is endowed with a bumpy metric.  So we are assuming $M$ is either hyperbolic or a lens space.  

Let $\Sigma$ be a strongly irreducible Heegaard splitting.  Suppose that after performing a neck-pinch on $H$, one obtains a surface isotopic to the boundary of a tubular neighborhood of a one-sided Heegaard splitting surface $\Gamma$.  Then we are in case (2) of the Theorem and by (a) above we can minimize in the isotopy class of $\Gamma$ to obtain a stable one-sided Heegaard splitting of $M$.

Let us therefore assume without loss of generality no such neck-pinch is possible. 

If $M$ is a hyperbolic manifold, there are no minimal tori or spheres and moreover, if $\Sigma_g$ is a minimal surface of genus $g$, then
\begin{equation}\label{aprioriareabound}
|\Sigma_g|< 4\pi(1-g).
\end{equation}

Consider the min-max limit $\Gamma_0$ obtained relative to $H$.  Thus we have
\begin{equation}
\Gamma_0=\sum_{i=1}^k n_i\Lambda_i, 
\end{equation}
where $\Lambda_i$ are closed embedded minimal surfaces and $n_i$ as positive integers.  Moreover, if $n_i>1$ then $\Lambda_i$ is a two-sphere.   Also we have
\begin{equation}
\sum_{i=1}^k n_i\mbox{genus}(\Lambda_i)\leq g. 
\end{equation}
From the Index bound \eqref{generic2} we obtain 
\begin{equation}
\sum_{i=1}^k\mbox{index}(\Lambda_i)=1
\end{equation}

Suppose that $\Lambda_1$ is the unique unstable component of $\Gamma_0$.   If $\Lambda_1$ has genus $g$, we are done.  Assume without loss of generality that the genus $g'$ of $\Lambda_1$ is less than $g$.  Thus from case (b) $\Lambda_1$ bounds a handlebody $Y_i$.  Let us remove from the manifold $M$ the set $Y_i$ to obtain a new manifold with boundary $M'=M\setminus Y_i$.  Since $\Lambda_1$ is unstable, we can minimize area in its isotopy class in $M'$ to obtain a closed embedded strictly stable minimal surface $\Lambda'_1$.  If $\Lambda_1$ has positive genus, then it is incompressible and $\Lambda'_1$ is a genus $g'$ strictly stable minimal surface in the isotopy class of $\Lambda_1$ together (potentially) with some minimal two-spheres.  If $\Lambda_1$ is a sphere, we obtain that $\Lambda_1'$ is a collection of minimal two-spheres.   If $\Lambda_1$ has positive genus, let us define $M''$ to be obtained from $M'$ be removing the collar neighborhood between the positive genus component of $\Lambda_1'$ and $\Lambda_1$.  If $\Lambda_1$ has genus zero, then one of the components of $\Lambda_1'$ is homologous to $\Lambda_1$, and thus we can form $M''$ be removing the cylindrical region between this component and $\Lambda_1$.   In the end, we obtain a manifold $M''$ with strictly stable boundary consisting of a single strictly stable minimal surface of genus $0\leq g'<g$.

Let us assume toward a contradiction that $M$ contains no index $1$ minimal surface isotopic to $\Sigma$. We will now describe an iteration process.  Beginning with $N_0=M''$, we obtain an infinite sequence of manifolds $\{N_i\}_{i=0}^\infty$ so that
\begin{enumerate}[label=(\alph*)]
\item $N_i$ has stable boundary $\partial N_i$ whose components are partitioned into two sets: $B^1_i$, and $B^2_i$, so that  for $j\in\{1,2\}$ each surface in $B^j_i$ is obtained from a sequence of $H^i$-surgeries on $\Sigma$. 
\item for each $i$ and $j\in\{1,2\}$ there holds $$\sum_{C\in B^j_i}\mbox{genus}(C)\leq g$$
\item $N_i\subsetneq N_j$ when $i>j$ 
\end{enumerate}

Note first that $N_0$ satisfies (1) and (2).  Given the surface $N_i$, let us describe how to construct $N_{i+1}$.

We can consider sweep-outs $\{\Sigma_t\}_{t=1}^1$ of $N_i$ with the following properties
\begin{enumerate}
\item $\Sigma_0 = B^1_i \cup \{\mbox{arcs joining the components of } B^1_t\}$
\item $\Sigma_t$ for $0<t<1$ is a surface isotopic to the strongly irreducible Heegaard surface $\Sigma$ 
\item $\Sigma_1 = B^2_i \cup \{\mbox{arcs joining the components of } B^2_t\}$
\item $\Sigma_t$ varies smoothly for $0<t<1$ and in the Hausdorff topology for $t\rightarrow 0$ and $t\rightarrow 0$
\end{enumerate}

We can consider the saturation of all such sweepouts $\Pi_{\Sigma_t}$ and width $W_i$ for the corresponding min-max problem.  Since the components of  
$B^2_i\cup B^1_i$ are strictly stable, it follows from Proposition \ref{boundary} that 
\begin{equation}
W_i>\sup_{C\in B^2_i\cup B^1_i} |C|. 
\end{equation}

Thus the min-max limit associated to the min-max problem consists of a varifold
\begin{equation}
\Lambda=\sum_{i=1}^k\Lambda_i.
\end{equation}

If $M$ is hyperbolic, then there are no minimal two-spheres and thus each component of $\Lambda$ arises with multiplicity $1$.  It then follows from Proposition \ref{boundary} that (at least) one component $\Lambda_1$ of $\Lambda$ is contained in the interior of the manifold. 
If instead $M$ is a lens space, then the Deformation Theorem similarly implies that a connected component of $\Lambda$ is contained in the interior of $M$.

  If $\Lambda_1$ is isotopic to one of the components of  $\partial N_i$,  then we remove from $N_i$ the collar region diffeomorphic to $\Lambda_1\times [0,1]$ to obtain a new manifold $N_i'$.   If $\Lambda_1$ bounds.a handlebody in $N_i$, then we remove this handlebody to obtain $N_i'$ (note in the process we may remove some components of $\partial N_i$ contained in this handlebody).   Since $N'_i$ now has one unstable component, $\Lambda_1$, we can minimize area \cite{MSY} in the isotopy class of $\Lambda_1$ in $N'_i$ to obtain a stable minimal surface $\tilde{\Lambda}_1$.  If $\Lambda_1$ has positive genus, then $\tilde{\Lambda}_1$ and $\Lambda_1$ are isotopic and we remove the collar region between them to obtain $N_{i+1}$.  If $\Lambda_1$ is a two-sphere, then it follows from our interpolation theorem that in fact $\tilde{\Lambda}_1$ consists of several two spheres with multiplicity $1$, one $\tilde{\Lambda}'_1$ of which is homologous to $\Lambda_1$.  We remove the collar region from $N_i$ between $\tilde{\Lambda}'_1$ and $\Lambda_1$ to obtain $N_{i+1}$.

This completes the construction of the iteration process to obtain an infinite sequence $N_1,N_2,...N_k...$ of manifolds satisfying (a), (b) and (c) above under the assumption that there is no index $1$ minimal surface isotopic to $\Sigma$.   We will now deduce a contradiction.

Since by (b) the genus of the surfaces $\partial N_i$ are bounded by $g$, if the areas of $\partial N_i$ were bounded, we obtain a Jacobi field, contradicting the bumpiness of the metric.   In the case that $M$ is hyperbolic, since $\partial N_i$ have bounded genus, it follows from the area bound \eqref{aprioriareabound} that the areas are in fact bounded, violating the bumpiness of the ambient metric. This is a contradiction and completes the proof in the hyperbolic case. 

Thus henceforth we may assume that the areas of $\partial N_i$ are an unbounded sequence of positive numbers and the ambient manifold is a lens space. 

Suppose for some subsequence of the $\partial N_i$, at least one component $\partial N^*_i$ of $\partial N_i$ that is distinct from $\partial N_{i-1}$ has genus $0<g'<g$.  In this case, we can assume that $\partial N_i$ are a nested sequence of genus $g'$ surfaces.   By Proposition \ref{nested} it follows that this case cannot happen as the metric is bumpy.  

Suppose instead for some subsequence of the $\partial N_i$, at least one component $\partial N^*_i$ of $\partial N_i$ distinct from  $\partial N_{i-1}$ is a sphere.  We claim that upon passing to a subsequence, we can assume that these spheres $\partial N_i$ are nested.  If not, by Lemma \ref{volumelowerbound}, each two-sphere bounds a definite volume $\mbox{vol}$.  If the two-spheres $\partial N_i$ are non-nested, then we obtain an impossibility:
\begin{equation}
\mbox{vol}(M)\geq\sum_{i=1}^\infty\mbox{vol}(\mbox{int}(\partial N^*_i))\geq \sum_{i=1}^\infty \mbox{vol}=\infty,
\end{equation}
where $\mbox{int}(\partial N^*_i)$ denotes the three-ball in $M$ enclosed by the two-sphere $\partial N^*_i$.

Thus we can assume the two spheres $\partial N^*_i$ are nested.  By Proposition \ref{nested}, we obtain a contradiction. 

Since all cases result in contradictions, it follows that the sequence $N_0, N_1, N_2...$ must terminate at a finite stage so that $N_k'$ is a surface isotopic to $\Sigma$ with Morse index $1$.

If the metric $g$ is not bumpy, we can consider a sequence of metrics $g_i\rightarrow g$.  By the above, there is an index $1$ minimal surface $\Sigma_i$ isotopic to $\Sigma$.  We can consider the limit $\Sigma_i\rightarrow\Sigma_\infty$. Since $\Sigma_i$ have bounded area and bounded genus, $\Sigma_\infty$ is a smooth connected minimal surface obtained with some multiplicity.   By strong irreducibility, the converge is multiplicity $1$ (as the only other alternative is for it converge with multiplicity two to one sided Heegaard splitting surface).  Since the convergence is multiplicity $1$ it is smooth everywhere, and thus the genus of $\Sigma$ is the same as $\Sigma_i$ and moreover, $\Sigma$ is isotopic to $\Sigma_i$.  
The index of $\Sigma_\infty$ is either $0$ or $1$ because under smooth convergence, the eigenvalues of the stability operator vary smoothly.
\qed


Finally, let us prove Theorem \ref{rp3}, which we restate:
\begin{thm}
Let $M$ be a Riemannian $3$-manifold diffeomorphic to $\mathbb{RP}^3$ endowed with a bumpy metric.  Then $M$ contains a minimal index $1$ two-sphere or minimal index $1$ torus.  
 \end{thm}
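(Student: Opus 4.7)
The plan is to apply the 2-sphere min-max framework to $\mathbb{RP}^3$ and then invoke the Catenoid Estimate \cite{KMN} to handle the case when the min-max limit degenerates to a multiplicity-2 copy of an embedded $\mathbb{RP}^2$. First I consider non-trivial sweepouts of $\mathbb{RP}^3$ by embedded two-spheres; since $\mathbb{RP}^3 \setminus \mathbb{RP}^2$ is a 3-ball, the middle slice of such a family necessarily degenerates as a varifold to $2 \cdot \mathbb{RP}^2$. The arguments establishing Theorem \ref{main} extend from $\mathbb{S}^3$ to $\mathbb{RP}^3$, since Deformation Theorem \ref{deformation} and the interpolation results of Sections \ref{interpolation}--\ref{multiplecomponents} are stated uniformly for lens spaces. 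This yields a min-max family $\sum n_i \Gamma_i$ with $\sum_i \mathrm{index}(\Gamma_i) = 1$; by the genus bound \eqref{gb}, each orientable $\Gamma_i$ must be a two-sphere and each non-orientable component is an $\mathbb{RP}^2$ appearing with even multiplicity at least 2.

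If one of the $\Gamma_i$ is a two-sphere of index 1, the conclusion follows with an index-1 minimal two-sphere. Otherwise, the index-1 component must be an unstable minimal $\mathbb{RP}^2$, call it $\Sigma$, appearing with multiplicity 2, while all two-sphere components are stable. In this case I invoke the Catenoid Estimate \cite{KMN}: near $\Sigma$, the passage through $2\Sigma$ in the original sweepout can be replaced by a passage through an embedded torus formed by inserting a catenoidal neck into the orientation double cover of $\Sigma$, with maximum area bounded by $2|\Sigma| + \varepsilon$ for arbitrarily small $\varepsilon > 0$. Splicing this modification into the original family outside a thin tubular neighborhood of $\Sigma$ produces a non-trivial Heegaard torus sweepout of $\mathbb{RP}^3$ relative to the (strongly irreducible) genus-1 Heegaard splitting.

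I then apply Theorem \ref{indexstrong} to this spliced torus sweepout, producing a family of minimal surfaces with total index 1 obtained from the Heegaard torus after finitely many neck-pinches. The index-1 component is either a minimal torus (yielding the desired index-1 Heegaard torus) or a minimal two-sphere (yielding the desired index-1 minimal two-sphere). The principal obstacle lies in making the iterated procedure of Theorem \ref{indexstrong} terminate in $\mathbb{RP}^3$, where Lemma \ref{volumelowerbound} fails for spheres bounding twisted $I$-bundles over $\mathbb{RP}^2$; the strict area reduction provided by the Catenoid Estimate (available since $\Sigma$ is unstable) together with Proposition \ref{nested} (which rules out infinite nested sequences of stable minimal surfaces in a bumpy metric by forcing a non-trivial Jacobi field) forces termination and produces the desired index-1 surface.
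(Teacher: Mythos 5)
Your proposal has a genuine gap, and it centers on never passing to the orientation double cover of $M$, which is the key idea in the paper's proof. You assert that the arguments establishing Theorem \ref{main} ``extend from $\mathbb{S}^3$ to $\mathbb{RP}^3$, since Deformation Theorem \ref{deformation} and the interpolation results of Sections \ref{interpolation}--\ref{multiplecomponents} are stated uniformly for lens spaces.'' But the interpolation machinery (Propositions \ref{main_deformation} and \ref{main_multiple}) is stated for \emph{two-sided} stable minimal surfaces: the whole construction of the squeezing map $P_t$ in Section \ref{sec:squeezing} begins with a diffeomorphism $\phi:\Si\times(-1,1)\to\Omega_1$. An embedded minimal $\mathbb{RP}^2\subset\mathbb{RP}^3$ is one-sided (its tubular neighborhood is a twisted $I$-bundle, not a product), so when the min-max limit is $2\mathbb{RP}^2$ these results do not apply as stated. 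Moreover ``sweepouts of $\mathbb{RP}^3$ by embedded two-spheres'' is not a Heegaard sweepout (the genus-$1$ splitting of $\mathbb{RP}^3$ is by tori; two-spheres only appear after a neck-pinch), so the hypothesis of Theorem \ref{main} is not met in the first place. For the same reason, invoking Theorem \ref{indexstrong} to conclude is circular: the proof of Theorem \ref{indexstrong}, via Propositions \ref{handlebody}, \ref{handlebody2}, \ref{nested} and Lemma \ref{volumelowerbound}, breaks down precisely when nested stable spheres bound twisted $I$-bundles over $\mathbb{RP}^2$, which is exactly the difficulty in $\mathbb{RP}^3$. You flag this obstacle but the proposed remedy (``catenoid estimate together with Proposition \ref{nested} forces termination'') is not a proof of termination.

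The paper's route is different and avoids all of these issues: pass to the double cover $\tilde{M}\cong\mathbb{S}^3$. There the $\mathbb{RP}^2$ lifts to a two-sided minimal two-sphere $\tilde\Gamma$, and $\mathbb{S}^3$ contains no non-orientable embedded surfaces, so Propositions \ref{handlebody2} and \ref{nested} apply cleanly. If $\tilde\Gamma$ is strictly stable, the iterated min-max in each ball $\tilde B_1,\tilde B_2$ produces an unstable two-sphere, which descends to $M$ because each ball is a fundamental domain for the deck group. If $\tilde\Gamma$ is unstable, one either produces an unstable two-sphere in $\tilde B_1$ (again descending to $M$), or one obtains an optimal sweepout through $\tilde\Gamma$, and then the Catenoid Estimate yields a torus sweepout of $M$ with maximal area strictly below $2|\Gamma|\leq W$, a contradiction. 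Your proposal omits the double cover, omits the stable-$\tilde\Gamma$ case entirely, and does not justify the index bound when the limit is one-sided; these are not minor omissions but the heart of the argument.
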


 \begin{proof}
 If the minimal surface realizing the width of $\mathbb{RP}^3$ contains a two-sphere or torus, then we are done.  Otherwise, the width is realized by $k\Gamma$, where $\Gamma$ is an embedded $\mathbb{RP}^2$ and $k$ is an even integer.  
 
 Let us pass to a double cover $\tilde{M}$ of $M$ so that $\Gamma$ lifts to $\tilde{\Gamma}$ which a two-sphere and $\tilde{M}$ is a three-sphere.   If $\tilde{\Gamma}$ is strictly stable, then by our Index Bound, each ball $\tilde{B}_1$, $\tilde{B}_2$ bounded by $\tilde{\Gamma}$ contains an unstable two-sphere $\tilde{S}_1$ and $\tilde{S}_2$, respectively. 
 
  Since $\tilde{S}_1$ is contained in a fundamental domain of the deck group, it follows that $\tilde{S}_1$ descends to a minimal two-sphere in $M$.  If instead $\tilde{\Gamma}$ is unstable, then we can push off $\tilde{\Gamma}$ to one side of $\tilde{\Gamma}$ using the lowest eigenfunction of the stability operator.  Using a by-now standard argument (c.f. Lemma 3.5 in \cite{mn1}) the sweepout can either be extended to the rest of $\tilde{B}_1$ with all areas below $\tilde{\Gamma}$ or else we obtain an unstable two-sphere in $\tilde{B}_1$, which means the theorem is proved as the unstable two-sphere in $\tilde{B}$ descends to $M$.    Thus we can assume we have realized $\tilde{\Gamma}$ as a minimal surface in an optimal foliation of $M$.  By the catenoid estimate \cite{KMN} we can easily construct a sweep-out of $M$ by tori with area less than $2|\Gamma|$.  Thus the width of $M$ could not have been realized by $k\Gamma$.  
 
 \end{proof} 

 This generalizes in a straightforward way to the case of strongly irreducible Heegaard splittings:
 
 \begin{thm}\label{orientable}
 Let $M$ be a hyperbolic three-manifold and $\Sigma$ a strongly irreducible Heegaard splitting of genus $g$.  Then $M$ contains an orientable index $1$ minimal surface of genus at most $g$.
 \end{thm}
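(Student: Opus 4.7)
The plan is to deduce Theorem \ref{orientable} from the Pitts--Rubinstein dichotomy (Theorem \ref{pr2}) together with the iterated handlebody min-max (Proposition \ref{handlebody2}). First I assume the metric on $M$ is bumpy; the non-bumpy case is handled at the end by approximation. By Theorem \ref{pr2}, one of the following holds: either (1) $\Sigma$ is isotopic to an orientable minimal surface of index exactly $1$ (of genus $g$, so we are done), or (2) after a neck-pinch on $\Sigma$ the resulting surface is isotopic to $\partial N(\Gamma)$ for some stable one-sided Heegaard surface $\Gamma \subset M$. Only case (2) requires further work.

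In case (2), the complement $H := M \setminus N(\Gamma)$ is a handlebody whose orientable boundary $\partial N(\Gamma)$ has genus $g' \leq g$ (since $\partial N(\Gamma)$ is obtained from $\Sigma$ by neck-pinches). The main challenge is that Theorem \ref{pr2} has so far produced only a non-orientable minimal surface, so to extract an orientable one I pass to the double cover $\pi : \tilde M \to M$ corresponding to the $\mathbb{Z}/2$-character of $\pi_1(M)$ dual to $[\Gamma] \in H_2(M; \mathbb{Z}/2)$. Then $\tilde M$ is hyperbolic, the preimage $\tilde \Gamma = \pi^{-1}(\Gamma)$ is a connected orientable stable minimal surface of genus $g'$ (the orientation double cover of $\Gamma$), and $\tilde \Gamma$ cuts $\tilde M$ into two isometric copies of $H$, each a fundamental domain for $\pi$. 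Applying Proposition \ref{handlebody2} inside one such copy $\tilde H \subset \tilde M$ (after perturbing the pullback metric on $\tilde M$ within its class of deck-invariant metrics to be bumpy, which is generic by a standard transversality argument) yields an orientable minimal surface $\tilde S \subset \tilde H$ of index $1$ and genus at most $g' \leq g$, obtained from $\tilde \Gamma$ by finitely many neck-pinch surgeries. Since $\pi|_{\tilde H}$ is an isometric embedding onto $H$, the surface $\tilde S$ descends to an embedded orientable index-one minimal surface $S \subset M$ of the same genus.

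Finally, the non-bumpy hyperbolic case follows by approximating $g_{\mathrm{hyp}}$ by bumpy metrics $g_i$: each $g_i$ yields an orientable index-one minimal surface $S_i$ of genus at most $g$, and by the a priori hyperbolic area bound together with Schoen's curvature estimates and the absence of minimal spheres or tori in hyperbolic $3$-manifolds (which rules out multiple sheets collapsing), the $S_i$ subconverge smoothly with multiplicity one to an orientable minimal surface in $g_{\mathrm{hyp}}$ of genus $\leq g$ and index $\leq 1$. If the limit degenerates to a stable surface with nullity, one further application of Proposition \ref{handlebody2} inside a handlebody bounded by it, again via bumpy approximation, extracts the index-one surface required.
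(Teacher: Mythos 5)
Your argument correctly identifies the two cases from Theorem \ref{pr2} and correctly passes to the orientation double cover of $M$ in case (2), which is the same strategy the paper uses (the paper simply says ``the proof is then identical to the case of $\mathbb{RP}^3$ above''). However, there is a genuine gap at the central step: you assert that $\tilde\Gamma = \pi^{-1}(\Gamma)$ is a \emph{stable} minimal surface and immediately feed it into Proposition \ref{handlebody2}, but stability of the one-sided surface $\Gamma$ does \emph{not} imply stability of its orientation double cover $\tilde\Gamma$. Stability of $\Gamma$ is a statement about the Jacobi operator restricted to sections of the twisted normal bundle, i.e.\ to functions $f$ on $\tilde\Gamma$ with $f\circ\tau = -f$ (where $\tau$ is the deck involution); stability of $\tilde\Gamma$ as a two-sided surface requires nonnegativity of the second variation on \emph{all} of $C^\infty(\tilde\Gamma)$. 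Since the principal eigenfunction of the Jacobi operator on $\tilde\Gamma$ is positive and hence $\tau$-invariant, the even modes are precisely the ones that stability of $\Gamma$ says nothing about, and $\tilde\Gamma$ being unstable while $\Gamma$ is stable is entirely possible. The paper's $\mathbb{RP}^3$ argument, which you are supposed to be replicating, treats this case explicitly (push $\tilde\Gamma$ off to one side using the first eigenfunction, either produce an unstable minimal surface in a fundamental domain which then descends, or obtain an optimal foliation and invoke the catenoid estimate). Without handling the unstable-$\tilde\Gamma$ case your proof is incomplete.

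A smaller point: the step where you perturb the pullback metric on $\tilde M$ ``within its class of deck-invariant metrics to be bumpy, which is generic by a standard transversality argument'' is both unnecessary and unjustified. It is unnecessary because the pullback of a bumpy metric under a finite covering is already bumpy: the projection is a local isometry, so any closed minimal surface $\tilde S\subset\tilde M$ maps to a closed immersed minimal surface in $M$, and Jacobi fields correspond under the local isometry, so White's (immersed) bumpiness on $M$ immediately rules them out upstairs. And it is unjustified because an equivariant version of White's genericity theorem is not something you can wave away as ``standard transversality''; it is not established in the paper and invoking it would be an additional obligation, not a shortcut. Dropping that step and instead arguing directly that the pullback metric on $\tilde M$ (and hence on $\tilde H$) is bumpy is both shorter and correct---but again, only once you have justified or circumvented the stability of $\tilde\Gamma$.
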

 
  \begin{proof}
  The proof is identical to the case of $\mathbb{RP}^3$ above.  By Pitts-Rubinstein claim, either $M$ contains a genus $g$ minimal surface isotopic to $\Sigma$, or $M$ contains a one sided minimal Heegaard splitting $\Gamma$.   In the first case, the theorem is proved, so we can assume $M$ contains a non-orientable minimal surface $\Gamma$.  
  
There is a double cover (see Section 2 of \cite{R2}) of the manifold $M$ so that the one-sided Heegaard surface $\Gamma$ lifts to become a Heegaard surface $\tilde{\Gamma}$ of genus $g$.   The proof is then identical to the case of $\mathbb{RP}^3$ above.   \end{proof}
  
 

   \end{document}